\def\tend{\rightarrow}
\def\R{\mathbb R}
\def\S{\mathbb S}
\def\N{\mathbb N}
\def\C{\mathcal C}
\def\H{\mathcal H}
\def\T{\mathcal T}
\def \B{\mathcal{B}}
\def\s{\sigma}
\def\a{\alpha}
\def\t{\theta}
\def\l{\lambda}
\def\L{L^1(\mathbb{S}^{d-1})}
\def\g{\gamma}
\def \u {\mathbf{u}}
\def\d{\mathrm{d}}
\def\Q{\mathcal{Q}}
\newtheorem{theo}{Theorem}[section]
\newtheorem{prop}[theo]{Proposition}
\newtheorem{cor}[theo]{Corollary}
\newtheorem{lem}[theo]{Lemma}
\newtheorem{defi}[theo]{Definition}
\newtheorem{rmq}[theo]{Remark}
\def \leq {\leqslant}
\def \geq {\geqslant}
\def \LLn {L_n(h)}
\def \LLN {L_n(h_1)}
\def \LLM {L_n(h_2)}
\numberwithin{equation}{section}
\def\beq{\begin{equation}}
\def\eeq{\end{equation}}
\def\beqn{\begin{equation*}}
\def\eeqn{\end{equation*}}
\def\bea{\begin{eqnarray}}
\def\eea{\end{eqnarray}}
\def\bean{\begin{eqnarray*}}
\def\eean{\end{eqnarray*}}
\def\bary{\begin{array}}
\def\eary{\end{array}}
\def\bal {ballistic annihilation}
\title[On ballistic annihilation]
{\textbf{Existence of self-similar profile for a kinetic annihilation model revisited
}}
\author{Véronique Bagland \& Bertrand Lods }
\address{\textbf{Véronique Bagland}, Clermont Universit\'e, Universit\'{e} Blaise Pascal, Laboratoire de Math\'{e}matiques, CNRS UMR 6620,  BP 10448, F-63000 Clermont-Ferrand,
France.}\email{Veronique.Bagland@math.univ-bpclermont.fr}
\address{\textbf{Bertrand Lods}, Universit\`{a} degli
Studi di Torino \& Collegio Carlo Alberto, Department of Economics and Statistics, Corso Unione Sovietica, 218/bis, 10134 Torino, Italy.}\email{lods@econ.unito.it}
\begin{document}

\maketitle

\begin{abstract}
We show the existence of a self-similar solution for a modified Boltzmann equation describing probabilistic ballistic annihilation.  Such a model describes a system of hard spheres such that, whenever two particles meet, they either annihilate with probability $\alpha \in (0,1)$ or they undergo an elastic collision with probability $1 - \alpha$. For such a model, the number of particles, the linear momentum and the kinetic energy are not conserved. We show that, for $\alpha$ smaller than some explicit threshold value $ \alpha_1$, a self-similar solution exists.

\medskip

\noindent\textsc{Keywords.} Boltzmann equation, ballistic annihilation, self-similar solution.

\end{abstract}

\tableofcontents

\section{Introduction}

In the physics literature, various kinetic models have been proposed in the recent years in order to test the relevance of non-equilibrium statistical
mechanics for systems of \textit{reacting particles}. Such models are very challenging in particular for the derivation of suitable hydrodynamic models because of the  lack of collisional invariants.
We investigate in the present paper a recent model, introduced in \cite{Ben-Naim,coppex04, coppex05, Kaprivsky, Piasecki,Trizac} to describe the so-called \textbf{\textit{probabilistic ballistic annihilation}}.  Such a model describes a system of (elastic) hard spheres that interact in the following way: particles move freely (ballistically) between collisions while, whenever two particles meet, they either annihilate with probability $\alpha \in (0,1)$ (and both the interacting particles disappear from the system), or they undergo an elastic collision with probability $1 - \alpha$. For such a model, not only the kinetic energy is not conserved during binary encounters, but also the number of particles and the linear
momentum. Notice that, originally only  pure annihilation has been considered \cite{Ben-Naim,Kaprivsky} (corresponding to $\alpha=1$). Later on, a more elaborate model has been built which allows to recover the classical Boltzmann equation for hard spheres in the limit $\alpha = 0$. Notice that such a Boltzmann equation for ballistic annihilation in the special (and unphysical) case of Maxwellian molecules has already been studied in the mid-80's \cite{spiga,santos}  and was referred to as \textit{Boltzmann equation with removal}.

The present paper is the first mathematical investigation of the physical model of probabilistic ballistic annihilation for the physical relevant hard spheres interactions, with the noticeable exception of the results of  \cite{theile} which prove  the validity of the spatially homogeneous Boltzmann equation for pure annihilation (i.e. whenever $\alpha=1$). We shall in particular prove the existence of special self-similar profile for the associated equation. Before entering into details of our results, let us introduce more precisely the model we aim to investigate.

\subsection{The Boltzmann equation for \bal}  In a kinetic framework, the behavior of a system of hard  spheres which annihilate with probability $\alpha \in (0,1)$ or collide elastically with probability $1-\alpha$ can be described (in a spatially homogeneous situation) by the so-called velocity distribution $f(t,v)$ which represents the probability density of particles with velocity $v \in \R^d$ $(d \geq 2)$ at time $t \geq 0.$  The time-evolution of the
one-particle distribution function \(f(t,v)\), \(v\in\R^d\),
\(t>0\) satisfies the following
\begin{equation}\label{BE}
\partial_t f(t,v)=(1-\alpha)\Q(f,f)(t,v) -\alpha \Q_-(f,f)(t,v)=\mathbb{B}(f,f)(t,v)
\end{equation}
where $\Q$ is the quadratic Boltzmann collision operator defined by the bilinear symmetrized form
 \begin{equation*}\label{bilin}
 \Q(g,f)(v) = \frac{1}{2}\,\int _{\R^d \times \S^{d-1}} \B(v-v_*,\s)
         \left(g'_* f' + g' f_* '- g_* f - g f_* \right) \, \d v_* \, \d\sigma,
 \end{equation*}
where we have used the shorthands $f=f(v)$, $f'=f(v')$, $g_*=g(v_*)$ and
$g'_*=g(v'_*)$ with post-collisional velocities $v'$ and $v'_*$  parametrized by
 \begin{equation*}\label{eq:rel:vit}
 v'  =  \frac{v+v_*}{2} + \frac{|v-v_*|}{2}\;\s,   \qquad
v'_* =  \frac{v+v_*}{2} - \frac{|v-v_*|}{2}\;\s,   \qquad \s  \in  {\S}^{d-1}
 \end{equation*}
and the collision kernel is given by
$$\B(v-v_*,\s)=\Phi( |v-v_*|)b(\cos \theta)$$
where $\cos \theta=\left\langle \frac{v-v_*}{|v-v_*|}, \s \right\rangle.$ Typically, for the model we have in mind, we shall deal with
$$\Phi(|v-v_*|)=|v-v_*|$$
and constant $b(\cdot)$ corresponding to hard spheres interactions which is the model usually considered in the physics literature \cite{Maynar1,Maynar2,Trizac}. We shall also consider more general kernel, typically, we shall assume that
\begin{equation}\label{Phig}\Phi(|v-v_*|)=|v-v_*|^\gamma \qquad  \gamma \in (0,1]\end{equation}
and
$$\|b\,\|_{\L}:=|\mathbb{S}^{d-2}|\int_{-1}^1 b(t)(1-t^2)^{(d-3)/2}\d t < \infty$$
where $|\mathbb{S}^{d-2}|$ is the area of $(d-2)$-dimensional unit sphere. Without loss of generality, we will assume in all the paper that
 $$\|b\,\|_{\L}=1.$$
 Notice that, for constant angular cross-section, this amounts to choose $b(\cdot)=1/|\mathbb{S}^{d-1}|$. A very special model is the one of so-called Maxwellian molecules which corresponds to $\gamma=0$. The model of Maxwellian molecules has been studied mathematically in \cite{santos, spiga} and we will discuss this very special case in Appendix \ref{appendixB}.

The above collision operator $\Q(f,f)$ splits as $\Q(f,f)=\Q_+(f,f)-\Q_-(f,f)$ where the gain part $\Q_+$ is given by
$$ \Q_+(f,f)(v) =  \int _{\R^d \times {\S}^{d-1}}  \B(v-v_*,\s) f'_* f'\, \d v_* \,  \d\sigma$$
while the loss part $\Q_-$ is defined as
$$\Q_-(f,f)(v)=f(v)L(f)(v), \qquad \text{ with } \qquad L(f)(v)=\int _{\R^d \times  {\S}^{d-1}}\B(v-v_*,\s) f_*\, \d v_* \, \d\sigma.$$
One has
$$\mathbb{B}(f,f):=(1-\alpha)\Q(f,f) -\alpha \Q_-(f,f)=(1-\alpha)\Q_+(f,f) -\Q_-(f,f).$$
Formally, if $f(t,v)$ denotes a nonnegative solution to \eqref{BE} then, no macroscopic quantities are conserved. For instance, the number density
$$n(t)=\int_{\R^d}f(t,v)\d v$$
and the kinetic energy
$$E(t)=\int_{\R^d}|v|^2\,f(t,v)\d v$$
are continuously decreasing since, multiplying \eqref{BE} by $1$ or $|v|^2$ and integrating with respect to $v$, one formally obtains
$$\dfrac{\d}{\d t}n(t)=-\alpha \int_{\R^d}\Q_-(f,f)(t,v)\d v \leq 0$$
while
$$\dfrac{\d}{\d t}E(t)=-\alpha \int_{\R^d} |v|^2\Q_-(f,f)(t,v)\d v \leq 0.$$
It is clear therefore that \eqref{BE} does not admit any nontrivial steady
solution and, still formally, $f(t,v) \to 0$ as $t \to \infty.$

\subsection{Scaling solutions}  Physicists expect that solutions to \eqref{BE}
should approach for large times a self-similar solution $f_H$ to \eqref{BE} of
the form
\begin{equation}\label{scalingfpsi}f_H(t,v)=\lambda(t)\,\psi_H(\beta(t)v)\end{equation}
for some suitable scaled functions $\lambda(t), \beta(t) \geq 0$  with $\lambda(0)=\beta(0)=1$
and some nonnegative function  $\psi_H=\psi_H(\xi)$ such that
\beq\label{bas}
 \psi_H \ \equiv \hspace{-3.5mm} / \hspace{2mm} 0 \qquad \mbox{ and } \qquad
\int_{\R^d} \psi_H(\xi) \, (1+|\xi|^2)\, \d\xi <\infty.
\eeq
The first step in the proof of the above statement is actually the existence
of the profile $\psi_H$ and \textit{\textbf{this is the aim of the present paper}}.

Using the scaling properties of the Boltzmann collision operators $\Q_\pm$,
one checks easily that
$$\mathbb{B}(f_H,f_H)(t,v) = \lambda^2(t)\beta^{-(d+\gamma)}(t)
\mathbb{B}(\psi_H,\psi_H)(\beta(t)v)  \qquad \forall v \in \R^d.$$
Then,  $f_H(t,v)$ is a solution to \eqref{BE} if and
only if $\psi_H(\xi)$ is a solution to the rescaled problem
$$\dfrac{\dot{\lambda}(t)\beta^{d+\gamma}(t)}{\lambda^2(t)}\psi_H(\xi)
+\dfrac{\dot{\beta}(t)\beta^{d+\gamma-1}(t)}{\lambda(t)}\xi \cdot \nabla_\xi \psi_H(\xi)
=\mathbb{B}(\psi_H,\psi_H)(\xi)$$
where the dot symbol stands for the time derivative. Since $\psi_H$ does not depend on time $t$, there exist some constants $\mathbf{A}$ and $\mathbf{B}$  such that
\begin{equation}\label{ABlambdabeta}\mathbf{A}=\dfrac{\dot{\lambda}(t)\beta^{d+\gamma}(t)}{\lambda^2(t)}, \qquad
\mathbf{B}=\dfrac{\dot{\beta}(t)\beta^{d+\gamma-1}(t)}{\lambda(t)}\end{equation}
Thereby, $\psi_H$ is a solution to
\begin{equation}
\label{tauT}
\mathbf{A}\psi_H(\xi)+\mathbf{B}\xi \cdot \nabla_\xi \psi_H(\xi)
=\mathbb{B}(\psi_H,\psi_H)(\xi).
\end{equation}
Actually, one sees easily that the coefficients $\mathbf{A}$ and $\mathbf{B}$ depend on the profile $\psi_H$. Indeed, integrating first \eqref{tauT} with respect to $\xi$ and then multiplying  \eqref{tauT} by $|\xi|^2$ and integrating again with respect to $\xi$ one sees that \eqref{bas} implies that
$$\mathbf{A}=-\frac{\alpha}{2} \int_{\R^d}
\left(\frac{d+2}{\int_{\R^d} \psi_H(\xi_*)\, \d\xi_*}
-\frac{d\,|\xi|^2}{\int_{\R^d} \psi_H(\xi_*)\, |\xi_*|^2\, \d\xi_*}\right)
\Q_-(\psi_H ,\psi_H )(\xi)\d \xi$$
and
$$\mathbf{B} =-\frac{\alpha}{2}\int_{\R^d}
\left(\frac{1}{\int_{\R^d} \psi_H(\xi_*)\, \d\xi_*}
-\frac{|\xi|^2}{\int_{\R^d} \psi_H(\xi_*)\, |\xi_*|^2\, \d\xi_*}\right)
\Q_-(\psi_H ,\psi_H )(\xi)\d \xi.$$
 Let us note that $\mathbf{A}$ and $\mathbf{B}$ have no sign. However,

$$0 < d\mathbf{B}-\mathbf{A}=\frac{\alpha}{\displaystyle\int_{\R^d} \psi_H(\xi_*)\, \d\xi_*}
  \displaystyle \int_{\R^d} \Q_-(\psi_H,\psi_H)(\xi)\, \d \xi\,,$$
and
$$0 < (d+2)\mathbf{B}-\mathbf{A}= \dfrac{\alpha}{\displaystyle\int_{ \R^d} \psi_H(\xi_*)\,|\xi_*|^2 \, \d\xi_*} \displaystyle \int_{\R^d} |\xi|^2\Q_-(\psi_H,\psi_H)(\xi)\, \d \xi\,.$$
  Solving \eqref{ABlambdabeta}, one obtains the expressions of $\beta$ and $\lambda$. Namely, since $\lambda(0)=\beta(0)=1$,
$$\begin{cases} \beta(t)&=\left(1+\left((d+\gamma)\mathbf{B}-\mathbf{A}\right)t\right)^{\frac{\mathbf{B}}{(d+\gamma)\mathbf{B}-\mathbf{A}}}\\
\lambda(t)&=\left(1+\left((d+\gamma)\mathbf{B}-\mathbf{A}\right)t\right)^{\frac{\mathbf{A}}{(d+\gamma)\mathbf{B}-\mathbf{A}}} \qquad t \geq 0\end{cases}$$
where we notice that $(d+\gamma)\mathbf{B}-\mathbf{A} > 0.$

We now observe that, with no loss of generality, one may assume that
\begin{equation}\label{init}\int_{\R^d} \psi_H(\xi)\,\d\xi=1 \qquad
\mbox{ and } \qquad \int_{\R^d} \psi_H(\xi)\,|\xi|^2\, \d\xi=\frac{d}{2}.
\end{equation}
Indeed, if $\psi_H$ denotes a solution to \eqref{tauT} satisfying
\eqref{init} then, for any $\beta=(\beta_1,\beta_2) \in (0,\infty)^2$, the function
$\psi_{H,\beta}$ defined by
$$\psi_{H,\beta}(\xi)=\beta_1\left(\frac{d\beta_1}{2\beta_2}\right)^{\frac{d}{2}}
\psi_H\left(\sqrt{\frac{d\beta_1}{2\beta_2}}\; \xi\right)$$ is a solution
to \eqref{tauT} with mass $\beta_1$ and energy $\beta_2$. Assuming \eqref{init} and introducing
$$n_H(t)=\int_{\R^d}f_H(t,v)\d v, \qquad E_H(t)=\int_{\R^d}|v|^2 f_H(t,v)\d
v,$$
one obtains
\begin{equation}\label{haff}\begin{cases}n_H(t)&=\left(1+\left((d+\gamma)\mathbf{B}-\mathbf{A}\right)t\right)^{-\,\frac{d\mathbf{B}-\mathbf{A}}{(d+\gamma)\mathbf{B}-\mathbf{A}}}\\

 E_H(t)&=\frac{d}{2} \left(1+\left((d+\gamma)\mathbf{B}-\mathbf{A}\right)t\right)^{-\;\frac{(d+2)\mathbf{B}-\mathbf{A}}{(d+\gamma)\mathbf{B}-\mathbf{A}}} \qquad t \geq 0\end{cases}\end{equation}

\textit{\textbf{The main objective of the present work is to prove the existence of a self-similar profile  $\psi_H$ satisfying \eqref{tauT},
\eqref{init}.}} Notice that the existence of such a self-similar profile was taken for granted in several works in the physics community \cite{Maynar1,Maynar2,Trizac} but no rigorous justification was available up to now. Our work  aims to fill this blank, giving in turn the first rigorous mathematical ground justifying the analysis performed in the \textit{op. cit.}

\subsection{Notations} Let us introduce the notations we shall use
in the sequel. Throughout the paper we shall use the notation
$\langle \cdot \rangle = \sqrt{1+|\cdot|^2}$. We denote, for any
$\eta \in \R$, the Banach space
\[
     L^1_\eta(\R^d) = \left\{f: \R^d \to \R \hbox{ measurable} \, ; \; \;
     \| f \|_{L^1_\eta} := \int_{\R^d} | f (v) | \, \langle v \rangle^\eta \d v
     < + \infty \right\}.
\]

More generally we define the weighted Lebesgue space $L^p_\eta
(\R^d)$ ($p \in [1,+\infty)$, $\eta \in \R$) by the norm
$$\| f \|_{L^p_\eta} = \left[ \int_{\R^d} |f (v)|^p \, \langle v
        \rangle^{p\eta} \d  v \right]^{1/p} \qquad 1 \leq p < \infty$$
        while $\| f \|_{L^\infty_\eta} =\mathrm{ess-sup}_{v \in \R^d} |f(v)|\langle v\rangle^\eta$ for $p=\infty.$

We shall also use weighted Sobolev spaces $\mathbb{H}^s_\eta(\R^d)$ ($s \in \R$, $\eta \in \R$). When $s \in \N$, they  are defined by the norm
$$\| f \|_{\mathbb{H}^s_\eta} = \left( \sum_{|\ell|\leq s} \| \partial^\ell f \|^2_{L^2_\eta} \right)^{1/2}$$
where for $\ell\in\N^d$,  $\partial^\ell=\partial_{\xi_1}^{\ell_1} \ldots \partial_{\xi_d}^{\ell_d}$ and $|\ell|=\ell_1+\ldots +\ell_d$. Then, the definition is extended to real positive values of $s$ by interpolation.  For negative value of $s$, one can define $\mathbb{H}^{s}_{-\eta}(\R^{d})$ as the dual space of $\mathbb{H}^{-s}_{\eta}(\R^{d})$, i.e.
$$\|f\|_{\mathbb{H}^{s}_{\eta}}=\sup\left\{\left|\int_{\R^{d}}f(v)\,g(v)\d v\right|\,;\; \,\|g\|_{\mathbb{H}^{-s}_{-\eta}(\R^{d})}\leq 1\right\} \qquad \forall s < 0,\quad \eta \in\R.$$
We also define the space $\mathcal{C}([0,T],w-L^{1}(\R^{d}))$ of continuous functions from $[0,T]$ to the space $L^{1}(\R^{d})$ where the latter is endowed with its weak topology. 

\subsection{Strategy and main results} To prove the existence of a steady state $\psi_H$, solution to \eqref{tauT}, we shall use a dynamical approach as in
\cite{BagLau07,BisiCarrLods08,EMRR05,GPV04,MiMouh06}. It then amounts to
finding a steady state to  the \textit{annihilation equation}
\begin{equation}\label{BEscaled}
\partial_t \psi(t,\xi) + \mathbf{A}_\psi(t)\,\psi(t,\xi) + \mathbf{B}_\psi(t) \,\xi \cdot \nabla_\xi \psi(t,\xi)=\mathbb{B}(\psi,\psi)(t,\xi)
\end{equation}
supplemented with some nonnegative initial condition
\beq\label{CI}
\psi(0,\xi)=\psi_0(\xi),
\eeq
where $\psi_0$ satisfies
\beq\label{massenergie}
\int_{\R^d} \psi_0(\xi)\, \d\xi=1, \qquad
\int_{\R^d} \psi_0(\xi)\,|\xi|^2\,  \d\xi=\frac{d}{2},
\eeq
while
$$\mathbf{A}_\psi(t)=-\frac{\alpha}{2}
\int_{\R^d}\left(d+2-2|\xi|^2\right)\Q_-(\psi,\psi)(t,\xi)\d \xi,$$
and
$$\mathbf{B}_\psi(t)=-\frac{\alpha}{2d}\int_{\R^d}
\left(d-2|\xi|^2\right)\Q_-(\psi,\psi)(t,\xi)\d \xi.$$

Notice that \eqref{BEscaled} has to be seen only as a somewhat artificial generalization of \eqref{tauT}: we do not claim that \eqref{BEscaled} can be derived from \eqref{BE} nor that a solution $\psi$ to \eqref{BEscaled} is associated to a self-similar solution to \eqref{BE}. Again, the introduction of the new equation \eqref{BEscaled} is motivated only by the fact that any steady state of \eqref{BEscaled} is a solution to \eqref{tauT}.

We now describe the content of this paper. As explained above, the existence of the profile $\psi_H$ is obtained by finding a steady state to the annihilation equation \eqref{BEscaled}. As in previous works  \cite{BagLau07,BisiCarrLods08,EMRR05,GPV04,MiMouh06}, the proof relies on the application of a suitable version of Tykhonov fixed point theorem (we refer to \cite[Appendix A]{BagLau07} for a complete proof of it):
\begin{theo}[\textit{\textbf{Dynamic proof of stationary states}}]\label{GPV}
Let $\mathcal{Y}$ be a locally convex topological vector space and  $\mathcal{Z}$ a nonempty convex and  compact subset of\, ${\mathcal{Y}}$. If $(\mathcal{S}_t)_{t \ge 0}$ is  a 
continuous semi-group on $\mathcal{Z}$ such that $\mathcal{Z}$ is invariant under
the action of $\mathcal{S}_t$  (that is $\mathcal{S}_t z  \in\mathcal{Z}$ for any $z \in {\mathcal{Z}}$
and $t \ge 0$),
then there exists $z_0 \in \mathcal{Z}$ which is stationary under the action
of $\mathcal{S}_t$ (that is $\mathcal{S}_t z_0=z_0$ for any $t \ge 0$).
\end{theo}

In a more explicit way, our strategy is therefore to identify a topological vector space $\mathcal{Y}$ and a convex subset $\mathcal{Z} \subset \mathcal{Y}$ such that
\begin{enumerate}
\item for any $\psi_0 \in \mathcal{Z}$ there is a global solution $\psi \in   \mathcal{C}([0,\infty),\mathcal{Y })$ to  \eqref{BEscaled} that satisfies \eqref{CI};
\item the  solution $\psi$ is unique in $\mathcal{Y}$ and for any $\psi_0  \in \mathcal{ Z}$, one has  $\psi(t) \in \mathcal{Z}$ for any $t > 0$;
\item the set $\mathcal{Z}$ is compactly embedded into $\mathcal{Y}$;
\item solutions to \eqref{BEscaled} have to depend continuously on the initial datum.
\end{enumerate}

According to the above program, a crucial step in the above strategy is therefore to investigate the well-posedness of the Cauchy problem \eqref{BEscaled}-\eqref{CI} and next section is devoted to this point. The notion of solutions we consider here is as follows.

\begin{defi}\label{defi:sol} Given a nonnegative initial datum $\psi_0$ satisfying \eqref{massenergie}
and given $T >0$, a nonnegative function $\psi\::\:[0,T] \times \R^d \to \R$ is said to be a solution to the annihilation equation \eqref{BEscaled} if
$$\psi \in \mathcal{C}([0,T]\,;{ \,w-L^1(\R^d)})\cap L^\infty(0,T;L^1_{2}(\R^d)) \cap L^1(0,T;L^1_{2+\gamma}(\R^d))$$
 and satisfies \eqref{BEscaled} in the weak form:
\begin{multline}\label{weakformu}
\int_{\R^d} \psi(t,\xi)\varrho(\xi)\d\xi + \int_0^t \d s
\big[\mathbf{A}_\psi(s)-d \mathbf{B}_\psi(s)\big]\int_{\R^d}  \varrho(\xi)\,
\psi(s,\xi)\,\d\xi \\
= \int_0^t \d s \mathbf{B}_\psi(s) \int_{\R^d}
\psi(s,\xi)\, \xi \cdot \nabla_\xi \varrho(\xi) \d\xi
+ \int_{\R^d} \varrho(\xi)\psi_0(\xi) \d \xi
+ \int_0^t \d s\int_{\R^d} \mathbb{B}(\psi,\psi)(s,\xi)\varrho(\xi)\d\xi
\end{multline}
for any $\varrho \in \mathcal{C}^1_c(\R^d)$ and any $t \in (0,T).$
\end{defi}
Notice that the assumption $\psi \in L^1(0,T;L^1_{2+\gamma}(\R^d))$ is needed
in order to both the quantities $\mathbf{A}_\psi(t)$ and $\mathbf{B}_\psi(t)$
to be well defined.

\bigskip
Let us point out the similarities and the differences between \eqref{BEscaled} and the well-known Boltzmann equation. First, it follows from the definition of the coefficients $\mathbf{A}_\psi$ and $\mathbf{B}_\psi$ that the mass and the energy of solutions to \eqref{BEscaled} are conserved. However, there is no reason for the momentum to be preserved. Even if we assume that the initial datum has vanishing momentum
we are unable to prove that this propagates with time. It is also not clear whether there exists an entropy for \eqref{BEscaled}. Let us note on the other hand that since the coefficients $\mathbf{A}_\psi$ and $\mathbf{B}_\psi$ involve moments of order $2+\gamma$ of $\psi$, a crucial step will be to prove, via suitable \emph{a priori} estimates, that high-order moments of solutions are uniformly bounded, ensuring a good control of both $\mathbf{A}_\psi$ and $\mathbf{B}_\psi$. At different stages of this paper, this lack of {\it a priori} estimates and this necessary control of $\mathbf{A}_\psi$ and $\mathbf{B}_\psi$ complicate the analysis with respect to the Boltzmann equation. It also leads us to formulate some assumptions, some of which we hope to be able to get rid of in a future work. Let us now describe precisely what are the practical consequences of the aforementioned differences.
Since we are interested in the physically relevant model of hard spheres interactions, the cross section involved in the collision operator is unbounded. Consequently, the existence of a solution to \eqref{BEscaled} is obtained by applying a fixed point argument to a truncated equation and then passing to the limit. Such an approach is reminiscent from the well-posedness theory of the Boltzmann equation \cite{MiWe99} and relies on suitable \emph{a priori} estimates and stability result. In particular, such a stability result allows to prove in a unique step the above points (1) and (4) of the above program. We thereby prove the following theorem in Section \ref{sec:cauchy}.
\begin{theo}\label{well_posedness}
Let $p>1$. Let $\psi_0\in L^1_{2+\gamma}(\R^d)\cap L^p(\R^d)$
be a nonnegative distribution function satisfying \eqref{massenergie}.
Then,  there exists a nonnegative solution 
$$\psi
\in\C([0,\infty);{w-L^1}(\R^d)) \cap L^1_{\mathrm{loc}}((0,\infty),L^1_{2+2\gamma}(\R^d))\cap L^\infty_{\mathrm{loc}}((0,\infty),L^1_{2+\gamma}(\R^d))$$
to \eqref{BEscaled} such that $\psi(0,\cdot)=\psi_0$ and
$$\int_{ \R^d} \psi(t,\xi) \, \d\xi= 1, \qquad
\int_{ \R^d} \psi(t,\xi) \,|\xi|^2 \, \d\xi= \frac{d}{2} \qquad \forall t \geq 0.$$
Furthermore, if we  assume that $p=2$ and that $\psi_0$ also satisfies 
\beq\label{hypini}\psi_0 \in L^1_{9+d+\gamma+2\kappa}(\R^d) \cap  L^2_{\frac{9+d}{2}+ \kappa}(\R^d)\cap \mathbb{H}^1_{3+\frac{d+\gamma+\kappa}{2}}(\R^d)
\eeq
for some $\kappa>0$, such a solution is unique. 
\end{theo}
Notice that, with respect to classical existence results on Boltzmann equation (see e.g. \cite{MiWe99}), we need here to impose an additional $L^p$-integrability condition on the initial datum $\psi_0$. Such an assumption is needed in order to control the nonlinear drift term in \eqref{BEscaled} and especially to get bounds on the moments of order
$2+\gamma$ arising in the definition of $\mathbf{A}_\psi(t)$ and $\mathbf{B}_\psi(t)$, these bounds need to be uniform with respect to the truncation.
Moreover, as far as the uniqueness is concerned, we need additional regularity assumptions of Sobolev type in order to control the drift term in the equation satisfied by the difference of two solutions.

The previous result allows to identify the space $\mathcal{Y}$  in the above Theorem \ref{GPV} as $\mathcal{Y}=L^{1}(\R^{d})$ endowed with its weak topology and gives the existence of a semi-group for \eqref{BEscaled} and the next step is to finding a subset $\mathcal{Z}$ which is left invariant under the action of this semi-group and is a compact subset of $\mathcal{Y}$. Since $\mathcal{Z}$ has to be a weakly compact subset of $L^{1}(\R^{d})$, it is natural in view of Dunford-Pettis criterion to look for a subspace involving \textit{higher-order moments} of the solution $\psi(t)$
together with additional \textit{integrability conditions}. We are therefore first lead to prove uniform in time moment estimates for the solution $\psi(t)$. More precisely, the main result of Section \ref{sec_mom} is the following
\begin{theo}\label{theomom}
Let $p>1$. Let $\psi_0\in L^1_{2+\gamma}(\R^d)\cap L^p(\R^d)$ be a nonnegative distribution function satisfying \eqref{massenergie}. Let then $\psi
\in\C([0,\infty); w-L^1(\R^d)) \cap L^\infty_{\mathrm{loc}}((0,\infty),L^1_{2+\gamma}(\R^d))$
be a nonnegative solution to \eqref{BEscaled}-\eqref{CI}. Then, there exists $\alpha_0\in(0,1]$ such that for $0<\alpha<\alpha_0$, the solution $\psi$ satisfies
$$\sup_{t \geq 0} \int_{\R^d}\psi(t,\xi)\,|\xi|^{2+\gamma}\d\xi \leq \max\left\{\int_{\R^d}\psi_0(\xi)\,|\xi|^{2+\gamma}\d\xi,\overline{M}\right\},$$
for some explicit constant $\overline{M}$ depending only on $\alpha$, $\gamma$, $b(\cdot)$ and $d$.
\end{theo}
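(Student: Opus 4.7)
The plan is to derive an autonomous differential inequality for $y(t):=M_{2+\gamma}(t)=\int_{\R^d}\psi(t,\xi)|\xi|^{2+\gamma}\d\xi$ of the form
\[
\dot y(t)\leq -a\,y(t)^2 + b\,y(t) + c, \qquad a,b,c>0,
\]
which automatically yields the barrier $y(t)\leq \max\{y(0),\overline{M}\}$, where $\overline{M}$ is the positive root of $ax^2-bx-c=0$. The first step is to test the weak formulation of Definition \ref{defi:sol} against a smooth truncation $\varrho_R(\xi)=\chi_R(|\xi|)|\xi|^{2+\gamma}$; the bound $\psi\in L^1_{\mathrm{loc}}((0,\infty),L^1_{2+\gamma+\delta}(\R^d))$ granted by Theorem \ref{well_posedness} allows one to pass to the limit $R\to\infty$. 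Using Euler's identity $\xi\cdot\nabla_\xi|\xi|^{2+\gamma}=(2+\gamma)|\xi|^{2+\gamma}$, this gives
\[
\dfrac{\d}{\d t}M_{2+\gamma}(t)=\big[-\mathbf{A}_\psi(t)+(d+2+\gamma)\mathbf{B}_\psi(t)\big]\,M_{2+\gamma}(t)+\int_{\R^d}|\xi|^{2+\gamma}\mathbb{B}(\psi,\psi)(t,\xi)\d\xi.
\]
Writing $D_s(\psi):=\int_{\R^d}|\xi|^s\Q_-(\psi,\psi)\d\xi$ and using the explicit formulas for $\mathbf{A}_\psi$, $\mathbf{B}_\psi$, a direct computation gives the cancellation
\[
-\mathbf{A}_\psi+(d+2+\gamma)\mathbf{B}_\psi=-\dfrac{\alpha\gamma}{2}D_0(\psi)+\dfrac{\alpha(2+\gamma)}{d}D_2(\psi),
\]
so that the drift contribution is of order $\alpha$.

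The second step assembles three elementary inequalities. The subadditivity $|\xi-\xi_*|^\gamma\leq|\xi|^\gamma+|\xi_*|^\gamma$ (for $\gamma\in(0,1]$) together with $M_\gamma\leq(d/2)^{\gamma/2}$ yields
\[
D_2(\psi)\leq M_{2+\gamma}+(d/2)^{1+\gamma/2},
\]
so the drift term contributes at most $\frac{\alpha(2+\gamma)}{d}M_{2+\gamma}^2 + (\text{lower order})$. Conversely, the dual inequality $|\xi-\xi_*|^\gamma\geq|\xi|^\gamma-|\xi_*|^\gamma$ gives the lower bound $D_{2+\gamma}(\psi)\geq M_{2+2\gamma}-(d/2)^{\gamma/2}M_{2+\gamma}$, and the Cauchy--Schwarz interpolation $M_{2+\gamma}^2\leq M_2\,M_{2+2\gamma}$ upgrades it to
\[
D_{2+\gamma}(\psi)\geq \dfrac{2}{d}M_{2+\gamma}^2-(d/2)^{\gamma/2}M_{2+\gamma}.
\]
Finally, splitting $\mathbb{B}=(1-\alpha)\Q_+-\Q_-$ and invoking a Povzner-type inequality \emph{à la} Bobylev--Gamba--Panferov for the non-integer exponent $s=2+\gamma$, one obtains a constant $\kappa=\kappa_{2+\gamma}\in(0,1)$, depending only on $d$, $\gamma$ and $b(\cdot)$, such that
\[
\int_{\R^d}|\xi|^{2+\gamma}\Q_+(\psi,\psi)\d\xi\leq \kappa\,D_{2+\gamma}(\psi)+C_\kappa,
\]
the remainder $C_\kappa$ involving only mass/energy moments and thus being an absolute constant.

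Combining these estimates produces the clean inequality
\[
\dot M_{2+\gamma}\leq -\dfrac{2}{d}\Big[1-(1-\alpha)\kappa-\dfrac{\alpha(2+\gamma)}{2}\Big]M_{2+\gamma}^2+b\,M_{2+\gamma}+c,
\]
with $b,c>0$ depending only on $d$, $\gamma$, $b(\cdot)$. The bracket is strictly positive as soon as $\alpha<\alpha_0$ with the explicit threshold
\[
\alpha_0:=\min\left\{1,\;\dfrac{1-\kappa}{\frac{2+\gamma}{2}-\kappa}\right\}\in(0,1],
\]
since $\kappa<1$. For any such $\alpha$ the inequality is of the form $\dot y\leq -a y^2 + by + c$ with $a>0$, and elementary ODI comparison produces the advertised uniform barrier $\overline{M}$.

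The main obstacle is obtaining the Povzner multiplicative constant $\kappa_{2+\gamma}<1$ for the \emph{non-integer} exponent $s=2+\gamma\in(2,3]$, because the usual formulations are stated for integer $s\geq 4$: here one must carefully quantify
\[
\int_{\S^{d-1}}\Big[\big(\tfrac{1+\cos\theta}{2}\big)^{s/2}+\big(\tfrac{1-\cos\theta}{2}\big)^{s/2}\Big]b(\cos\theta)\d\sigma<1
\]
and control the mixed-moment remainder by the conserved quantities $M_0=1$, $M_2=d/2$. A secondary technical point is the rigorous justification of the pointwise-in-time differentiation of $M_{2+\gamma}$ along a solution only known to lie in $\C([0,\infty);L^1_2)\cap L^1_{\mathrm{loc}}((0,\infty);L^1_{2+\gamma})$, which requires the truncation-and-limit procedure described above combined with the a priori estimates of Theorem \ref{well_posedness}.
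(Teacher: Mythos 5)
Your proof is correct and follows essentially the same route as the paper: the moment identity for $M_{2+\gamma}$ obtained from the weak formulation, the sharp Bobylev--Gamba--Panferov Povzner bound with angular constant $\varrho_{1+\gamma/2}<1$ (which, as in the paper's Lemma \ref{lemMk}, holds for all real exponents larger than $2$, so the non-integer exponent is not an extra obstacle), absorption of the $O(\alpha)$ drift contribution into the quadratic coercive part of the loss term, and exactly the same threshold $\alpha_0=\frac{1-\varrho_{1+\gamma/2}}{1+\gamma/2-\varrho_{1+\gamma/2}}$. The only imprecision is that the Povzner remainder is not an absolute constant: the cross terms $|\xi|^{2}|\xi_*|^{\gamma}+|\xi|^{\gamma}|\xi_*|^{2}$, once multiplied by $|\xi-\xi_*|^{\gamma}$, produce contributions linear in $M_{2+\gamma}$ (such as $M_{2+\gamma}M_{\gamma}$), but this is harmless since your final differential inequality retains the linear term $b\,M_{2+\gamma}$ in any case.
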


\begin{rmq} The parameter $\alpha_0$ appearing in the above theorem  is fully explicit. In the particular case of true hard spheres in dimension $d=3$, i.e. for constant collision kernel $b(\cdot)=1/4\pi$ and $\gamma=1$, one has  $\alpha_0=\frac{2}{7}.$ We refer to Proposition \ref{theoMom} \& Remark \ref{rmqalpha0} for more details.
\end{rmq}
The proof of the above result relies on a careful study of the moment system associated to the solution $\psi(t)$ to \eqref{BEscaled}-\eqref{CI}. Since we are dealing with hard spheres interactions, such a system is not closed but a sharp version of \textit{Povzner-type inequalities} allows to control higher-order moments in terms of lower-order ones.  The restriction on the parameter $\alpha \in (0,\alpha_0)$ arises naturally in the proof of the uniform in time bound of the moment of order $2+\g$ (see Proposition \ref{theoMom}).

At the end of Section \ref{sec_mom} we establish a lower bound for $L(\psi)$ where $L$ denotes the operator in the definition of $\Q_-$, namely
\beq\label{minL}L(\psi)(t,\xi)=\int_{\R^d} \psi(t,\xi_*)\,|\xi-\xi_*|^\gamma \,\d\xi_*\geq \mu_\alpha \langle \xi \rangle ^\gamma, \qquad \forall \xi \in \R^d,\:\:t \geq 0,
\eeq
for some positive constant $\mu_\alpha >0$ depending on $\gamma,d,\alpha$, $b(\cdot)$ and on $\int_{\R^d} \psi_0(\xi)|\xi|^\gamma\d\xi$. Note that this bound will be essential in Section
\ref{sec_LP} and that we need here to assume that $\psi_0$ is an \textit{\textbf{isotropic}} function.  Isotropy is indeed propagated by \eqref{BEscaled}. For the Boltzmann equation, this assumption is useless since such a bound may be obtained thanks to the entropy for elastic collisions (see \cite[Proposition 2.3]{MouhVill04}) or thanks to the Jensen inequality and vanishing momentum for inelastic collisions and $\gamma=1$ (see \cite[Eq. (2.7)]{MiMo3}). This naturally leads us to Section \ref{sec_LP} where we deal with propagation of higher-order Lebesgue norms and where
we obtain the following:
\begin{theo}\label{theoLp}
Let $\psi_0\in L^1_{2+\gamma}(\R^d)$ be a nonnegative distribution function satisfying \eqref{massenergie}. We assume furthermore that $\psi_0$ is an isotropic function, that is
\begin{equation}\label{HYP}\psi_0(\xi)=\overline{\psi_0}(|\xi|) \qquad \forall \xi \in \R^d.\end{equation}
Then, there is some explicit $\overline{\alpha} \in (0,1]$ such that, for $0 < \alpha < \overline{\alpha}$  there exists some explicit $p^\star_\a \in (1,\infty]$ such that, for any $p  \in (1,p_\a^\star)$,
$$\psi_0 \in L^p(\R^d) \implies \sup_{t \geq 0}\|\psi(t)\|_{L^p} \leq  \max\left\{\|\psi_0\|_{L^p},C_p(\psi_0)\right\}$$
for some explicit constant $C_p(\psi_0)>0$ depending only on  $\alpha$, $\gamma$, $b(\cdot)$, $p$, the dimension $d$ and $\int_{\R^d}\psi_0(\xi)|\xi|^\gamma\d\xi.$ Here above,  $\psi \in\C([0,\infty);{w-L^1(\R^d)}) \cap L^\infty_{\mathrm{loc}}((0,\infty),L^1_{2+\gamma}(\R^d))$
is a nonnegative solution to \eqref{BEscaled}-\eqref{CI}.
\end{theo}
\begin{rmq}  Just as in Theorem \ref{theomom}, the parameter $\overline{\alpha}$ is explicit: for true hard spheres in dimension $d=3$ one has $\overline{\alpha}=\frac{1}{4}.$ In this case, the parameter $p^\star_\a=\frac{3\alpha}{5\alpha-1}$ if $1/5 < \alpha < \overline{\alpha}$ while $p^\star=\infty$ if $\alpha \leq 1/5.$ See Remarks \ref{rmqastar}, \ref{rmqabar} \& \ref{rmq:Cpunif} for details.
\end{rmq}
The proof of the above result comes from a careful study of the equation for higher-order Lebesgue norms of the solution $\psi(t)$ combined with the above bound \eqref{minL} where we only consider \textit{\textbf{isotropic}} initial datum. Here again, one notices a restriction on the parameter $\alpha \in (0,\overline{\alpha})$ for the conclusion to hold. The fact that the constant $C_p(\psi_0)$ depends on the initial datum $\psi_0$ through (the inverse of) its moment $\int_{\R^d}\psi_0(\xi)|\xi|^\gamma\d\xi$ is no major restriction since we will be able to prove the propagation of lower bound for such a moment along the solution to \eqref{BEscaled} (see Sections \ref{sec_mom} and \ref{sec_LP} for details).

It remains now to show that weighted Sobolev bounds also propagate uniformly with time. We are able to do it only for physical case of hard-spheres interactions, i.e. whenever $\gamma=1$. 

\begin{theo}\label{theo:sob} Assume $\gamma=1$.  
Let $\psi_0\in L^1_{2+\gamma}(\R^d)$ be a nonnegative function satisfying 
\eqref{HYP} and 
$$ \psi_0 \in L^1_{\mathfrak{q}(\kappa)}(\R^d) \cap  L^2_{\frac{9+d}{2}+ \kappa}(\R^d)\cap \mathbb{H}^1_{\frac{d+7+\kappa}{2}}(\R^d)
$$
for some $\kappa>0$ where $\mathfrak{q}(\kappa)=\max\left\{\frac{9+d(d-2)}{2}+\kappa, 10+d+2\kappa\right\}.$
 Let 
 $$\psi \in{\C([0,\infty);w-L^1(\R^d)) \cap L^\infty_{\mathrm{loc}}((0,\infty),L^1_{2+\gamma}(\R^d))}$$
denote the nonnegative solution to \eqref{BEscaled}-\eqref{CI}. Then, there is some explicit $\alpha_1 \in (0,\min\{\alpha_0,\overline{\alpha}\}]$ such that, for $0 < \alpha < \alpha_1$ 
\beq\label{L2normunif}
\sup_{t \geq 0} \|\psi(t)\|_{L^2_{\frac{9+d}{2}+\kappa}}\leq \max\left\{ \|\psi_0\|_{L^2_{\frac{9+d}{2}+\kappa}}, C_{2,\kappa}(\psi_0) \right\}
\eeq
and 
\beq\label{H1normunif}
\sup_{t\geq 0} \|\nabla\psi(t)\|_{L^2_{\frac{d+7+\kappa}{2}}} \leq \max\left\{\|\nabla \psi_0\|_{L^2_{\frac{d+7+\kappa}{2}}}, C_{\mathrm{Sob}}(\psi_0)\right\},
\eeq
where $C_{2,\kappa}(\psi_0)$ depends on the bound in Theorem \ref{theomom}, 
$\|\psi_0\|_{L^1_{\mathfrak{q}(\kappa)}}$ and the bound in Theorem \ref{theoLp} (with $p=2$) whereas $C_{\mathrm{Sob}}(\psi_0)$ depends on  the same bounds and on the one in \eqref{L2normunif}
\end{theo}

Section \ref{sec_sob} is devoted to the proof of the above Theorem. 
The proofs of \eqref{L2normunif} and \eqref{H1normunif} rely on \eqref{minL} and on some well-known regularity properties of $\Q_+$. 
 We wish to emphasise the fact that the restriction $\gamma=1$ is coming from the propagation of (weighted) Sobolev norms. Namely, while local in time propagation of weighted Sobolev norms is true for any $\gamma \in (0,1]$ (see  Lemma \ref{lem:sob}), we are able to prove \emph{uniform in time} estimates only for $\gamma=1$. Notice that, for $\gamma \in (0,1)$, the main obstacle is coming from the \emph{loss term} $\Q_{-}.$

Combining the four above results with  Theorem \ref{GPV}  we obtain our main result, proven in Section \ref{sec_autosim}:
\begin{theo}\label{existence} Assume $\gamma=1$. For any $\alpha \in (0, \alpha_1)$ there exists a radially symmetric nonnegative $\psi_H \in L^1_{3}(\R^d) \cap L^2(\R^d)$ satisfying \eqref{tauT} and \eqref{init}.
\end{theo}
The proof of the above result is rather straightforward in view of the previously obtained results.

Open problems and perspectives are addressed in Section \ref{sec:discuss}.
As previously mentioned, one of them consists in showing that solutions to \eqref{BE} approach for large times a self-similar solution $f_H$ to \eqref{BE} of the form \eqref{scalingfpsi}. The first step was the existence of the profile $\psi_H$, which has been obtained in Section \ref{sec_autosim}. Besides, one is also interested in the well-posedness of \eqref{BE} and, following the same arguments as in the proof of Theorem \ref{well_posedness} the existence of a solution to \eqref{BE} may be easily  obtained. More precisely, we have

\begin{theo}\label{exi_BE}
Let $f_0\in L^1_{2+\gamma}(\R^d)$ be a nonnegative distribution function. Then,  there exists a unique nonnegative solution $f\in\C([0,\infty);L_2^1(\R^d)) \cap L^1_{\mathrm{loc}}((0,\infty),L^1_{2+\gamma}(\R^d))$ to \eqref{BE} such that $f(0,\cdot)=f_0$ and
\beq\label{ineg}\int_{ \R^d} f(t,v) \, \d v\leq \int_{ \R^d} f_0(v) \, \d v, \qquad
\int_{ \R^d} f(t,v) \,|v|^2 \, \d v\leq \int_{ \R^d} f_0(v ) \,|v|^2 \, \d v  \qquad \forall t \geq 0.\eeq
\end{theo}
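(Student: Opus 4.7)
The plan is to mimic the strategy developed for Theorem \ref{well_posedness}: approximate \eqref{BE} by a sequence of equations with truncated (hence bounded) collision kernels, construct global solutions to the truncated problems via a contraction argument, derive uniform a priori bounds, and pass to the limit via a weak stability result. A pleasant feature here is that \eqref{BE} carries no nonlinear drift terms $\mathbf{A}_\psi$, $\mathbf{B}_\psi$ and no mass/energy constraint, so the argument is strictly simpler: no restriction on $\alpha$ is needed, and one does not have to impose an extra $L^p$-integrability on $f_0$. That is why the author says the proof follows at once from that of Theorem \ref{well_posedness}.

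First I would introduce, for $n \geq 1$, the truncated kernel $\Phi_n(r):=\min\{r^\gamma,n\}$, denote by $\mathbb{B}_n$, $\Q_-^{(n)}$, etc. the corresponding operators, and show that $\mathbb{B}_n$ is Lipschitz on $L^1_2(\R^d)$. A Banach fixed point argument in $\C([0,T];L^1_2(\R^d))$, iterated in time, then provides a unique nonnegative global solution $f_n\in \C([0,\infty);L^1_2(\R^d))$ to $\partial_t f_n=\mathbb{B}_n(f_n,f_n)$, $f_n(0,\cdot)=f_0$. Next I would establish the uniform $L^1_2$ bound: testing the truncated equation against $1$ and $|v|^2$, using that the elastic part $\Q(f_n,f_n)$ conserves mass and energy and that $\Q_-^{(n)}(f_n,f_n)\geq 0$ as well as $|v|^2\Q_-^{(n)}(f_n,f_n)\geq 0$, one gets
\begin{equation*}
\frac{\d}{\d t}\int_{\R^d}f_n(t,v)\,\d v \leq 0,\qquad \frac{\d}{\d t}\int_{\R^d}|v|^2 f_n(t,v)\,\d v \leq 0,
\end{equation*}
so that $\sup_n\sup_{t\geq 0}\|f_n(t)\|_{L^1_2}\leq \|f_0\|_{L^1_2}$ and \eqref{ineg} holds for every $f_n$.

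Then I would propagate, locally in time, the moment of order $2+\gamma$. Testing against $|v|^{2+\gamma}$ and using a Povzner-type inequality (as in Section \ref{sec_mom}, but without the drift contribution) yields a differential inequality of the form $\frac{\d}{\d t}\int f_n|v|^{2+\gamma}\d v \leq C\bigl(1+\int f_n|v|^{2+\gamma}\d v\bigr)$, hence
\begin{equation*}
\sup_{n\geq 1}\sup_{t\in[0,T]}\int_{\R^d}f_n(t,v)\,|v|^{2+\gamma}\d v \leq C_T<\infty,\qquad T>0,
\end{equation*}
since $f_0\in L^1_{2+\gamma}(\R^d)$. Crucially, no smallness of $\alpha$ is required because we only ask for a local-in-time bound. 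Combined with the $L^1_2$ bound, this moment estimate provides, via Dunford–Pettis, uniform tightness and equi-integrability of the family $\{f_n(t)\}_{n,t\in[0,T]}$ in $L^1(\R^d)$.

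Finally I would pass to the limit $n\to\infty$. From the above, up to extraction, $f_n\to f$ in $\C([0,T];L^1_2(\R^d)\text{-weak})$, and the uniform $(2+\gamma)$-moment bound shows that $f\in L^1_{\mathrm{loc}}((0,\infty);L^1_{2+\gamma}(\R^d))$. Reusing the weak stability argument of Section \ref{sec:cauchy} for the bilinear operators $\Q_\pm$ against compactly supported test functions $\varrho\in\C^1_c(\R^d)$, one verifies that $f$ solves \eqref{BE} in the weak sense and lies in $\C([0,\infty);L^1_2(\R^d))$; the inequalities \eqref{ineg} pass to the limit by lower semi-continuity. The main (though not genuinely new) obstacle is this last step: because the hard-sphere kernel is unbounded, splitting $\mathbb{B}_n(f_n,f_n)-\mathbb{B}(f,f)$ into a tail contribution (controlled by the $(2+\gamma)$-moment bound) and a bulk contribution (controlled by the weak stability already used in Theorem \ref{well_posedness}) is what makes the passage to the limit rigorous.
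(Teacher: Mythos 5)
Your construction of the truncated solutions (Banach fixed point for the bounded kernel, monotone mass/energy, local-in-time propagation of the $(2+\gamma)$-moment via Povzner with no smallness condition on $\alpha$) is sound and in fact somewhat simpler than the paper's route, which regularizes the initial datum in $W^{1,\infty}$ and uses Schauder plus a continuation argument. The genuine gap is in the passage to the limit $n\to\infty$. You claim that the $L^1_2$ bound together with the $(2+\gamma)$-moment bound gives, ``via Dunford--Pettis, uniform tightness and equi-integrability'' of $\{f_n(t)\}$. Moment bounds give tightness only; they do \emph{not} give equi-integrability, and here $f_0$ is assumed only in $L^1_{2+\gamma}$ --- there is no $L^p$ bound and no entropy bound available to rule out concentration, so Dunford--Pettis cannot be invoked. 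Moreover, even if one granted weak $L^1$ convergence for each $t$, passing to the limit in the quadratic terms $f_n(v)f_n(v_*)$ and recovering the required strong continuity $f\in\C([0,\infty);L^1_2(\R^d))$ would still need uniform integrability or strong convergence; the ``stability argument of Section \ref{sec:cauchy}'' that you invoke is in fact a norm estimate, not a weak-compactness tool.

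The way to close the gap --- and this is what the paper does --- is to avoid weak compactness altogether: compare two approximate solutions $f_n$ and $f_m$ directly, multiplying the equation for $f_m-f_n$ by $\mathrm{sign}(f_m-f_n)\langle v\rangle^2$, and estimate the difference of the truncated operators exactly as in Proposition \ref{prop:stability}, the tail contribution $\Phi_m-\Phi_n$ being controlled by the uniform bound on moments of order $2+\gamma$ (this is precisely where $f_0\in L^1_{2+\gamma}$ enters, cf.\ Lemma \ref{lemCT_f}: since the mass decreases, the $(2+\gamma)$-moment cannot be recovered from the dissipation and must be assumed initially). Gronwall's lemma then shows that $(f_n)_n$ is a Cauchy sequence in $\C([0,T];L^1_2(\R^d))$, so the limit exists in the strong topology, solves \eqref{BE} in the weak sense by elementary continuity of the truncated operators along strongly convergent sequences, belongs to $\C([0,\infty);L^1_2)\cap L^\infty_{\mathrm{loc}}((0,\infty);L^1_{2+\gamma})$, and inherits \eqref{ineg}. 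With this replacement of your compactness step, the rest of your outline goes through.
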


We give the main lines for the proof of this Theorem in Appendix \ref{appendixA}. Finally, the particular case of Maxwellian molecules is discussed in the Appendix \ref{appendixB}.

\section{Cauchy problem}\label{sec:cauchy}

This section is devoted to the proof of Theorem \ref{well_posedness}. To this aim, we first consider a truncated equation.

\subsection{Truncated equation}

In this section, we only assume that $\psi_0 \in W^{1,\infty}(\R^d)\cap
L^1_{2+\delta}(\R^d)$ (for some $\delta > 0$) is a \textit{fixed} nonnegative distribution function that does not necessarily satisfy the above \eqref{massenergie}  and we truncate the
collision kernel $\B$. Thereby, for $n\in\N$, we consider here the
well-posedness of the following equation
\beq\label{annihi}
\partial_t \psi (t,\xi)+ \mathbf{A}^n_\psi(t) \, \psi(t,\xi)
+\mathbf{B}^n_\psi(t)\, \xi\cdot\nabla\psi(t,\xi)
= \mathbb{B}^n(\psi,\psi)(t,\xi),
\eeq
where the collision operator $\mathbb{B}^n(\psi,\psi)$ is given by
\beq\label{Bn}
\mathbb{B}^n(\psi,\psi)=(1-\alpha)\Q_+^n(\psi,\psi)-\Q_-^n(\psi,\psi),
\eeq
for which the collision operator $\Q^n$ is defined as above with a collision kernel $\B_n$ given by
$$\B_n(\xi-\xi_*,\s)=\Phi_n(|\xi-\xi_*|)b_n(\cos \theta)$$
with
$$b_n(x)=\mathbf{1}_{\{|x|\leq 1-1/n\}}b(x)  \qquad \text{ and } \quad \Phi_n(r)=\left(\min\left\{r,n\right\}\right)^\gamma, \quad
 \gamma\in(0,1].$$
Finally,
$$\mathbf{A}^n_\psi(t):=-\frac{\alpha}{2} \int_{\R^d}
\left(\frac{d+2}{\int_{\R^d} \psi(0,\xi_*)\, \d\xi_*}
-\frac{d\,|\xi|^2}{\int_{\R^d} \psi(0,\xi_*)\, |\xi_*|^2\, \d\xi_*}\right)
\Q^n_-(\psi ,\psi )(t,\xi)\d \xi$$
and
$$\mathbf{B}^n_\psi(t) :=-\frac{\alpha}{2}\int_{\R^d}
\left(\frac{1}{\int_{\R^d} \psi(0,\xi_*)\, \d\xi_*}
-\frac{|\xi|^2}{\int_{\R^d} \psi(0,\xi_*)\, |\xi_*|^2\, \d\xi_*}\right)
\Q^n_-(\psi,\psi )(t,\xi)\d \xi.$$
We notice here that the definitions of  $\mathbf{A}^n_\psi(t)$ and
$\mathbf{B}_\psi^n(t)$ match the definitions of $\mathbf{A}_\psi(t)$ and
$\mathbf{B}_\psi(t)$ given in the introduction with $\Q^n_-$ replacing $\Q_-$
when $\psi_0$ is assumed to satisfy \eqref{massenergie}. The main result of this section is the following well-posedness theorem:
 \begin{theo}\label{cauchypb} Let $\delta>0$ and let $\psi_0 \in
   W^{1,\infty}(\R^d)\cap L^1_{2+\delta}(\R^d)$ be a nonnegative distribution
   function. Then, for any $n \geq 1$,  there exists a nonnegative solution $\psi=\psi_n
   \in\C([0,\infty);L^1(\R^d))$ to the  truncated problem  \eqref{annihi} such
   that $\psi_n(0,\cdot)=\psi_0$ and
$$\int_{ \R^d} \psi_n(t,\xi) \, \d\xi= \int_{ \R^d} \psi_0(\xi) \, \d\xi, \qquad
\int_{ \R^d} \psi_n(t,\xi) \,|\xi|^2 \, \d\xi= \int_{ \R^d} \psi_0(\xi)
\,|\xi|^2 \, \d\xi\qquad \forall t \geq 0.$$
\end{theo}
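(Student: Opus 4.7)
My plan is to set up a Banach fixed-point argument on a linearized version of \eqref{annihi}, exploiting that the truncated kernel is uniformly bounded by a constant depending on $n$. Fix $T>0$ and a closed ball of radius $R$ (large compared to $\|\psi_0\|_{L^1_2}$) in $\mathcal{C}([0,T];L^1_2(\R^d))$; along the iteration I would also propagate auxiliary $L^\infty_{\mathrm{loc}}([0,T];W^{1,\infty}\cap L^1_{2+\delta})$ bounds so that the drift term stays meaningful. For $\tilde\psi$ in this ball, the scalars $\mathbf{A}^n_{\tilde\psi}(t)$, $\mathbf{B}^n_{\tilde\psi}(t)$, the multiplier $L^n(\tilde\psi)(t,\xi)$, and the nonnegative source $\Q^n_+(\tilde\psi,\tilde\psi)(t,\xi)$ are all bounded on $[0,T]\times\R^d$ with bounds depending on $n$ and $R$. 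I would then define $\Xi(\tilde\psi)=\psi$ as the unique solution of the linear transport equation
$$\partial_t\psi + \bigl[\mathbf{A}^n_{\tilde\psi}(t) + L^n(\tilde\psi)(t,\xi)\bigr]\psi + \mathbf{B}^n_{\tilde\psi}(t)\,\xi\cdot\nabla_\xi\psi = (1-\alpha)\Q^n_+(\tilde\psi,\tilde\psi)(t,\xi), \quad \psi(0)=\psi_0,$$
computed explicitly via Duhamel along the radial characteristics $X_t(\xi) = \xi\exp\bigl(\int_0^t\mathbf{B}^n_{\tilde\psi}(s)\,\mathrm{d}s\bigr)$. Positivity of $\psi$ follows from the positivity of the integrating factor and the source, and the $W^{1,\infty}$ bound on $\psi_0$ is propagated by this explicit formula.

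The next step is contraction. The truncation together with the bilinearity of $\Q^n_\pm$ makes the maps $\tilde\psi\mapsto\mathbf{A}^n_{\tilde\psi},\,\mathbf{B}^n_{\tilde\psi},\,L^n(\tilde\psi),\,\Q^n_+(\tilde\psi,\tilde\psi)$ locally Lipschitz on $L^1_2(\R^d)$, with constants depending only on $n$ and $R$. Writing the Duhamel representation for two iterates, subtracting and invoking Grönwall, I would conclude that $\Xi$ maps the ball into itself and is a strict contraction on $\mathcal{C}([0,T_0];L^1_2)$ for some $T_0=T_0(n,R)$ sufficiently small; its unique fixed point is the local nonnegative solution.

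To obtain the conservation laws and extend globally, I would test \eqref{annihi} against $\varrho\equiv 1$ and $\varrho(\xi)=|\xi|^2$. Using $\int\Q^n(\psi,\psi)\,\mathrm{d}\xi = \int|\xi|^2\Q^n(\psi,\psi)\,\mathrm{d}\xi = 0$ together with integration by parts in the drift, I arrive at
$$\frac{\mathrm{d}}{\mathrm{d}t}\int_{\R^d}\psi\,\mathrm{d}\xi = \bigl[d\mathbf{B}^n_\psi - \mathbf{A}^n_\psi\bigr]\int_{\R^d}\psi\,\mathrm{d}\xi - \alpha\int_{\R^d}\Q^n_-(\psi,\psi)\,\mathrm{d}\xi,$$
and the analogous identity for $\int|\xi|^2\psi\,\mathrm{d}\xi$ with $d$ replaced by $d+2$. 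A direct algebraic manipulation of the definitions of $\mathbf{A}^n_\psi$ and $\mathbf{B}^n_\psi$ yields the key identities
$$d\mathbf{B}^n_\psi - \mathbf{A}^n_\psi = \frac{\alpha\int_{\R^d}\Q^n_-(\psi,\psi)\,\mathrm{d}\xi}{\int_{\R^d}\psi_0\,\mathrm{d}\xi}, \qquad (d+2)\mathbf{B}^n_\psi - \mathbf{A}^n_\psi = \frac{\alpha\int_{\R^d}|\xi|^2\Q^n_-(\psi,\psi)\,\mathrm{d}\xi}{\int_{\R^d}|\xi_*|^2\psi_0\,\mathrm{d}\xi_*},$$
so that mass and energy evolve as scalar multiples of $\int\psi(t) - \int\psi_0$ and $\int|\xi|^2\psi(t) - \int|\xi_*|^2\psi_0$ respectively; Grönwall then pins both moments at their initial values. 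The resulting uniform bound on $\|\psi(t)\|_{L^1_2}$ permits iterating the local existence step indefinitely. The main technical obstacle is the asymmetric form of $\mathbf{A}^n_\psi,\mathbf{B}^n_\psi$ (current density in $\Q^n_-(\psi,\psi)$, but initial moments in the normalization): I must propagate uniform bounds on the $L^1_{2+\delta}$ and $W^{1,\infty}$ norms of the iterates so that the drift stays meaningful, which is tractable because every constant depends only on $n$ and on the initial data.
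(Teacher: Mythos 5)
Your overall architecture is the same as the paper's in most respects: freeze the unknown in the coefficients, solve the resulting linear drift--transport problem explicitly along the dilation characteristics via Duhamel, work in a set carrying $W^{1,\infty}\cap L^1_{2+\delta}$ bounds, and recover the conservation laws from the identities $d\mathbf{B}^n_\psi-\mathbf{A}^n_\psi=\alpha\int_{\R^d}\Q^n_-(\psi,\psi)\,\d\xi/\|\psi_0\|_1$ and $(d+2)\mathbf{B}^n_\psi-\mathbf{A}^n_\psi=\alpha\int_{\R^d}|\xi|^2\Q^n_-(\psi,\psi)\,\d\xi/\int_{\R^d}\psi_0|\xi|^2\d\xi$, which, together with a continuation argument, is exactly how the paper passes from local to global; that part of your plan is sound. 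The genuine gap is the strict-contraction claim. When you subtract the two Duhamel representations, the initial datum and the gain term appear composed with two \emph{different} flows, e.g. $\psi_0\bigl(\xi e^{-\int_0^t\mathbf{B}^n_{h_1}}\bigr)-\psi_0\bigl(\xi e^{-\int_0^t\mathbf{B}^n_{h_2}}\bigr)$. The only pointwise control you have is $\|\nabla\psi_0\|_\infty\,|\xi|\,|\lambda_1-\lambda_2|$, which is not integrable against the weight $\langle\xi\rangle^2$; splitting $\{|\xi|\le R\}\cup\{|\xi|>R\}$ and using the $L^1_{2+\delta}$ moment only yields, after optimizing in $R$, a H\"older modulus of order $\|h_1-h_2\|_{L^1_2}^{\delta/(d+3+\delta)}$ (and the same obstruction occurs for $\Q^n_+(h_i,h_i)$ composed with the two flows, since only $W^{1,\infty}$ plus moment bounds are available). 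A H\"older modulus, no matter how small $T_0$ is, is not a contraction — for small distances $C r^\theta\ge r$ — so Banach's theorem does not apply as you invoke it. The local Lipschitz continuity of $h\mapsto\mathbf{A}^n_h,\mathbf{B}^n_h,\mathbf{C}^n_h,\Q^n_+(h,h)$ is the easy part and does not resolve this, because the quasilinear drift makes the characteristics themselves depend on the frozen function.

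This is precisely why the paper does not argue by contraction from the mild formulation: its stability estimate (Proposition \ref{cont}) is obtained from the equation satisfied by the difference $\T(h_1)-\T(h_2)$, multiplied by $\mathrm{sign}(\cdot)\langle\xi\rangle^2$, where the dangerous term $[\mathbf{B}^n_{h_2}-\mathbf{B}^n_{h_1}]\,\xi\cdot\nabla_\xi\T(h_1)$ is handled by moving the derivative off $\T(h_1)$, so that it only costs $(d+4)\|\T(h_1)\|_{L^1_2}$; existence is then concluded via relative compactness of $\T(\H)$ (Simon's theorem) and Schauder's fixed point theorem. If you prove the Lipschitz stability estimate in that PDE form — its constant is of order $T$ — a Banach-type argument on the invariant set could indeed be closed, but your stated route (subtracting Duhamel formulas and applying Gr\"onwall) does not go through and needs to be replaced by an argument of this kind.
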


The proof of this  result follows classical paths already
employed for the classical space homogeneous Boltzmann equation but is made
much more technical because of the contribution of some nonlinear drift-term.
Let $T>0$ and
$$h\in \C([0,T]; L^1(\R^d))\cap L^\infty((0,T); L^1(\R^d,|\xi|^{2+\delta}\, \d\xi))$$
be fixed.
We consider the auxiliary equation:
\begin{equation}\label{annihi_lin}
\begin{cases}
\partial_t \psi (t,\xi)&+ \mathbf{A}^n_h(t) \, \psi(t,\xi)
+\mathbf{B}^n_h(t)\, \xi\cdot\nabla_\xi\psi(t,\xi) +\LLn(t,\xi)\, \psi(t,\xi)\\
& \phantom{++++++++++++++}= (1-\alpha)\, \Q^n_+(h,h)(t,\xi),\\
\psi(0,\xi)&=\psi_0(\xi).\end{cases}\end{equation}
Here, $\mathbf{A}_h^n$ and  $\mathbf{B}^n_h$ are defined as
$\mathbf{A}_\psi^n$ and  $\mathbf{B}^n_\psi$ with $\Q^n_-(h,h)$ replacing
$\Q^n_-(\psi,\psi)$ and
$$\LLn(t,\xi):=\int_{\R^d \times \S^{d-1}} \B_n(\xi-\xi_*,\s)\,
h(t,\xi_*)\, \d\xi_*\, \d\sigma=\|b_n\|_{\L} \int_{\R^d} \Phi_n(|\xi-\xi_*|)\, h(t,\xi_*)
\, \d\xi_*.$$
We solve this equation using the characteristic method:
notice that, by assumption on $h$, the mapping $t \mapsto \mathbf{B}_h^n(t)$
is continuous on $[0,T]$ and, for any $\xi\in\R^d$, the characteristic
equation
\begin{equation}\label{cara}\dfrac{\d}{\d t}X(t;s,\xi)=\mathbf{B}^n_h(t)\, X(t;s,\xi),\qquad \qquad X(s;s,\xi)=\xi,\end{equation}
gets a unique global solution given by
$$X_h(t;s,\xi)=\xi \exp\left(\int_s^t \mathbf{B}_h^n(\tau)\, \d\tau\right).$$
Then, the Cauchy problem \eqref{annihi_lin} admits a unique solution given by
\begin{multline}\label{sol}
\psi (t,\xi)=\psi^1(t,\xi)+\psi^2(t,\xi)
= \psi_0\left(X_h(0;t,\xi)\right)
\exp\left(-\int_0^t \left[\mathbf{A}_h^n(\tau)
+\LLn\left(\tau,X_h(\tau;t,\xi)\right)\right]\, \d\tau\right)
\\
 +  (1-\alpha) \int_0^t \exp\left(-\int_s^t \left[\mathbf{A}_h^n(\tau)
+\LLn\left(\tau,X_h(\tau;t,\xi)\right)\right]\, \d\tau\right)
\, \Q_+^n(h,h)\left(s,X_h(s;t,\xi)\right)\, \d s.
\end{multline}

For any $T >0$ and any $M_1,M_2,\ell, C_\delta >0$ (to be fixed later on), we define $\H=\H_{T,M_1,M_2,\ell, C_\delta}$ as the set of all nonnegative
$h\in\C([0,T];L^1(\R^d))$ such that
$$\sup_{t\in[0,T]} \int_{\R^d} h(t,\xi)\, \d\xi\leq M_1, \qquad
\sup_{t\in[0,T]} \int_{\R^d} h(t,\xi)\,|\xi|^2\,  \d\xi \leq M_2,$$
and
$$ \sup_{t\in[0,T]} \int_{\R^d} h(t,\xi)\,|\xi|^{2+\delta}\,  \d\xi \leq C_\delta,
\qquad  \sup_{t\in[0,T]} \| h(t)\|_{W^{1,\infty}} \leq \ell.$$
Define then the mapping $$\T\::\:\H\longrightarrow \C([0,T];L^1(\R^d))$$
 which, to any $h \in \H$, associates the solution $\psi=\T(h)$ to \eqref{annihi_lin} given by \eqref{sol} (notice that, clearly, it would be more correct to write $\T_n$ instead of $\T$ since $n$ has been fixed). We look for parameters $T, M_1, M_2, C_\delta$ and $\ell$ that ensure $\T$ to map $\H$ into itself. To do so, we shall use the following lemma whose proof is omitted and relies only on the very simple estimate:
$$\Q_-^n(h,h)(t,\xi)=h(t,\xi)\LLn(t,\xi) \leq \left(n^\gamma M_1 \|b_n\|_{\L}\right) h(t,\xi) \qquad \forall t \in [0,T]$$
valid for any $h \in \H$.
\begin{lem}\label{lem1} Define, for any $n \in \mathbb{N}$ and any $M_1 >0$,
$$\mu_n=\mu_n(M_1)=\frac{\alpha}{\|\psi_0\|_{L^1}}\; n^\gamma M_1  \|b_n\|_{\L}
\quad \mbox{ and } \quad \nu_n=\nu_n(M_1)=\frac{\alpha\,n^\gamma M_1  \|b_n\|_{\L}}{\int_{ \R^d} \psi_0(\xi)\,|\xi|^2 \, \d\xi}.$$
For any fixed $h \in \H$ and any $(t,\xi)\in[0,T]\times \R^d$ the following hold
\begin{enumerate}[(i)]
\item $0\leq d\mathbf{B}_h^n(t)-\mathbf{A}_h^n(t)=\frac{\alpha}{\|\psi_0\|_{L^1}}
  \displaystyle \int_{\R^d} \Q_-^n(h,h)(t,\xi)\, \d \xi \leq \mu_n M_1.$
\item $-\frac{\mu_n}{2}M_1  \leq  \mathbf{B}_h^n(t)
\leq \frac{\nu_n}{2}M_2.$
\item  $-\frac{\mu_n(d+2)}{2}M_1\leq  \mathbf{A}_h^n(t).$
\item $0 \leq (d+2)\mathbf{B}_h^n(t)-\mathbf{A}_h^n(t)=\dfrac{\alpha}{\int_{ \R^d} \psi_0(\xi)\,|\xi|^2 \, \d\xi} \displaystyle \int_{\R^d} |\xi|^2\Q_-^n(h,h)(t,\xi)\, \d \xi \leq
\nu_n M_2.$
\end{enumerate}

\end{lem}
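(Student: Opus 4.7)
The plan is to reduce everything to two linear combinations of $\mathbf{A}_h^n(t)$ and $\mathbf{B}_h^n(t)$ in which the cross-terms cancel, and then to apply the pointwise bound quoted just before the lemma, namely
\[
\Q_-^n(h,h)(t,\xi)=h(t,\xi)\,\mathbf{C}_h^n(t,\xi)\leq n^\gamma M_1\|b_n\|_1\, h(t,\xi),
\]
which is immediate from $\Phi_n\leq n^\gamma$ and $\int h(t,\xi_\ast)\d\xi_\ast\leq M_1$.

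First I would prove (i) and (iv), which are really algebraic identities. Writing out the definitions of $\mathbf{A}_h^n$ and $\mathbf{B}_h^n$ (with $\Q_-^n(h,h)$ in place of $\Q_-(\psi,\psi)$) and computing $d\,\mathbf{B}_h^n(t)-\mathbf{A}_h^n(t)$, the $|\xi|^2$-terms weighted by $d$ cancel exactly while the two constant terms combine into $-(d+2)/\|\psi_0\|_1+d/\|\psi_0\|_1=-2/\|\psi_0\|_1$, giving the identity claimed in (i). The same bookkeeping on $(d+2)\mathbf{B}_h^n(t)-\mathbf{A}_h^n(t)$ kills the $1/\|\psi_0\|_1$-terms and leaves $-2|\xi|^2/\int_{\R^d}\psi_0|\xi_\ast|^2\d\xi_\ast$, which is the formula in (iv). In both cases nonnegativity is obvious from $\Q_-^n(h,h)\geq 0$, and the upper bounds $\mu_n M_1$ and $\nu_n M_2$ follow by inserting the pointwise estimate above and then using respectively $\int h\d\xi\leq M_1$ and $\int h\,|\xi|^2\d\xi\leq M_2$ from the definition of $\H$.

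For (ii) and (iii) I would split each coefficient into its ``mass piece'' and its ``energy piece''. Write
\[
\mathbf{B}_h^n(t)=-\tfrac{\alpha}{2\|\psi_0\|_1}\!\int_{\R^d}\!\Q_-^n(h,h)(t,\xi)\d\xi+\tfrac{\alpha}{2\int\psi_0|\xi_\ast|^2\d\xi_\ast}\!\int_{\R^d}\!|\xi|^2\Q_-^n(h,h)(t,\xi)\d\xi,
\]
and similarly for $\mathbf{A}_h^n(t)$ with coefficients $(d+2)$ and $d$. Since both integrals are nonnegative, discarding the positive summand yields the lower bounds in (ii) and (iii), and discarding the negative summand yields the upper bound in (ii); the resulting one-sided integrals are then controlled exactly as in steps (i) and (iv) by the pointwise estimate combined with the appropriate moment bound from $\H$. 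One has to be a bit careful about which sign is dropped in which inequality, but this is purely bookkeeping.

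I do not expect any real obstacle here: the content of the lemma is that the truncation $\Phi_n\leq n^\gamma$ turns $\Q_-^n(h,h)$ into a quantity dominated by a constant times $h$, so every functional of interest is controlled by the mass and energy of $h$, which are uniformly bounded on $\H$. The only mildly delicate point is organising the algebra of (i) and (iv) so that the cancellations are transparent; once those identities are in hand, (ii) and (iii) are immediate consequences of sign considerations.
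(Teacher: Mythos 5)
Your proof is correct and follows exactly the route the paper intends: the paper omits the proof of this lemma, remarking only that it relies on the pointwise bound $\Q_-^n(h,h)\leq n^\gamma M_1\|b_n\|_1\,h$, and your argument consists precisely of the algebraic cancellations in $d\mathbf{B}_h^n-\mathbf{A}_h^n$ and $(d+2)\mathbf{B}_h^n-\mathbf{A}_h^n$ followed by that estimate together with the mass and energy bounds defining $\H$. No gaps; the sign bookkeeping in (ii) and (iii) is handled correctly by discarding the term of the appropriate sign.
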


\paragraph{\textit{\textbf{Control of the density.}}}   By  a simple change of variables, one checks easily that  the solution $\psi(t,\xi)$ given by \eqref{sol} fulfills
\begin{multline*}
\int_{\R^d}\psi(t,\xi)\, \d\xi =  \int_{\R^d}\psi_0(\xi) \,
\exp\left(\int_0^t \left[ d\,\mathbf{B}_h^n(\tau)-\mathbf{A}_h^n(\tau)
-\LLn\left(\tau, X_h(\tau;0,\xi)\right)\right]\, \d\tau\right)\,
\d\xi  \\
 + (1-\alpha) \int_0^t \d s\int_{\R^d} \exp\left(\int_s^t \left[ d\,\mathbf{B}_h^n(\tau)-\mathbf{A}_h^n(\tau)
-\LLn\left(\tau,X_h(\tau,s,\xi)\right)\right]\, \d\tau\right)\,
\Q_+^n(h,h)(s,\xi) \, \d\xi.
\end{multline*}
It comes then from the above Lemma \ref{lem1} 
that
\bean
\int_{\R^d}\psi(t,\xi)\, \d\xi & \leq &
\|\psi_0\|_{L^1} \,\exp\left(t\, \mu_n\,M_1\right)
+ (1-\alpha) \int_0^t \exp\left((t-s)\,\mu_n\,M_1\right)\d s
\int_{\R^d} \Q_+^n(h,h)(s,\xi) \, \d\xi , \\
& \leq &  \|\psi_0\|_{L^1} \, \exp\left(t\mu_n\,M_1\right)
+ \frac{1-\alpha}{\alpha}\mu_n\,M_1\|\psi_0\|_{L^1}
\int_0^t  \exp\left((t-s)\,\mu_n\,M_1\right) \, \d s,\eean
where we also used that $ \int_{\R^{d}}\Q^{n}_{+}(h,h)(s,\xi)\d\xi=\int_{\R^{d}}\Q^{n}_{-}(h,h)(s,\xi)\d\xi$. We deduce from this that
\begin{equation}\label{density}\sup_{t \in [0,T]} \int_{\R^d}\psi(t,\xi)\,\d
  \xi \leq \|\psi_0\|_{L^1}\,\left(  \exp\left(T\,\mu_n\,M_1\right)
+ \frac{1-\alpha}{\alpha}\left(\exp\left(T\,\mu_n\,M_1\right)-1\right)\right) \qquad \forall h \in \H.\end{equation}

\paragraph{\textit{\textbf{Control of the moments.}}} We now focus on the control of moments of order $r$ with $r \geq 2$ to the solution $\psi$ given by \eqref{sol}. Arguing as above,
\begin{multline*}
\int_{\R^d}\psi(t,\xi)\, |\xi|^r\, \d\xi  =  \int_{\R^d}\psi_0(\xi) \,
|\xi|^r\, \exp\left(\int_0^t \left[ (r+d)\,\mathbf{B}_h^n(\tau)-\mathbf{A}_h^n(\tau)
-\LLn\left(\tau,X_h(\tau,0,\xi)\right)\right]\, \d\tau\right)\,
\d\xi  \\
+ (1-\alpha) \int_0^t \, \d s\int_{\R^d} \exp\left(\int_s^t \left[ (r+d)\,\mathbf{B}_h^n(\tau)
    -\mathbf{A}_h^n(\tau)  -\LLn\left(\tau,X_h(\tau,s,\xi)\right)
  \right]\, \d\tau\right)\\\, \Q_+^n(h,h)(s,\xi) \, |\xi|^r\, \d\xi.
\end{multline*}
Using again Lemma \ref{lem1}, we get
\begin{multline*}
\int_{\R^d}\psi(t,\xi)\, |\xi|^r\, \d\xi  \leq  \exp\left(t\,(\mu_n\,
  M_1+\frac{\nu_n \, r}{2} M_2)\right)
\int_{\R^d}\psi_0(\xi) \, |\xi|^r\, \d\xi  \\
+(1-\alpha) \int_0^t  \exp\left((t-s)\,(\mu_n\, M_1+\frac{\nu_n\,r}{2} M_2)\right)
\int_{\R^d} \, \Q_+^n(h,h)(s,\xi) \, |\xi|^r\, \d\xi\, \d s.
\end{multline*}
Now, the change of variables $(\xi,\xi_*) \to (\xi',\xi_*')$ together with the fact that $|\xi'|\leq |\xi|+|\xi_*|$, yields
\bean
\int_{\R^d} \, \Q_+^n(h,h)(s,\xi) \, |\xi|^r\, \d\xi & \leq &
\int_{\R^d\times \R^d} \int_{\S^{d-1}} \mathcal{B}_n(\xi-\xi_*,\s) \,h(s,\xi)\, h(s,\xi_*)\, |\xi'|^r\, \d\s \, \d\xi \, \d\xi_* \\
& \leq & 2^{r-1} \, n^\g \,\|b_n\|_{\L} \int_{\R^d\times \R^d} h(s,\xi)\,
h(s,\xi_*)\, ( |\xi|^r+ |\xi_*|^r)\, \d\xi \, \d\xi_* \\
& \leq & 2^r \, n^\g \,\|b_n\|_{\L} \, M_1 \int_{\R^d} h(s,\xi)\,
 |\xi|^r \, \d\xi.
\eean
Hence,
\begin{multline*}
\int_{\R^d}\psi(t,\xi)\, |\xi|^r\, \d\xi \leq  \exp\left(t\,(\mu_n\,
  M_1+\frac{\nu_n\, r}{2} M_2)\right)
\int_{\R^d}\psi_0(\xi) \, |\xi|^r\, \d\xi  \\
+ (1-\alpha)2^r \, \frac{\mu_n}{\alpha} \;\|\psi_0\|_{L^1} \int_0^t
\exp\left((t-s)(\mu_n\, M_1+\frac{\nu_n\, r}{2} M_2)\right)\, \d s
\int_{\R^d} h(s,\xi)\, |\xi|^r \, \d\xi .
\end{multline*}
In particular, choosing successively $r=2$ and $r=2+\delta$ one gets that
\begin{multline}\label{r=2}
\sup_{t \in [0,T]}\int_{\R^d}\psi(t,\xi)\, |\xi|^2\, \d\xi \leq
\exp\big(T\,(\mu_n\, M_1+ \nu_n\, M_2)\big) \int_{ \R^d} \psi_0(\xi)\,|\xi|^2
\, \d\xi \\
 + 4 \,\|\psi_0\|_{L^1}\;\frac{1-\alpha}{\alpha}\dfrac{\mu_n \,M_2}{\mu_n\,
   M_1+\nu_n\, M_2}\left(\exp\big(T\,(\mu_n\,M_1+\nu_n\, M_2)\big)-1\right)
\end{multline}
and
\begin{multline}\label{r=2+d}
\sup_{t \in [0,T]}\int_{\R^d}\psi(t,\xi)\, |\xi|^{2+\delta}\, \d\xi \leq
\exp\big(T\,(\mu_n\, M_1+\frac{2+\delta}{2}\; \nu_n\,  M_2)\big)
\int_{\R^d}\psi_0(\xi)\, |\xi|^{2+\delta}\, \d\xi\\
 + \|\psi_0\|_{L^1} \;\frac{1-\alpha}{\alpha}\dfrac{C_\delta\,2^{2+\delta}\, \mu_n
 }{\mu_n\, M_1+\frac{2+\delta}{2} \; \nu_n\,M_2}\left(\exp\big(T\,(\mu_n\,
   M_1+\frac{2+\delta}{2}\; \nu_n\,  M_2)\big)-1\right)\end{multline}
for any $h \in \H$.\\

\paragraph{\textit{\textbf{Control of the $W^{1,\infty}$ norm.}}} Our assumption on the collision kernel of the operator $\Q^n$ allows us to apply
\cite[Theorem 2.1]{MouhVill04} with $k=\eta=0$ and $\sin^2(\t_b/2)=1/(2n)$ to get directly
$$\|\Q^n_+(h,h)\|_{L^\infty} \leq 2 \, n^{1+\g} \, \|b_n\|_{\L}\,\|h\|_{L^1}
\, \|h\|_{L^\infty}.$$
Then, the change of variable $\s\to -\s$ yields
$$\nabla \Q^n_+(h,h)=\Q^n_+(\nabla h,h)+\Q^n_+(h,\nabla h)=2\, \Q^n_+(h,\nabla h)$$
and, applying again \cite[Theorem 2.1]{MouhVill04}:
$$\|\nabla \Q^n_+(h,h)\|_{L^\infty} \leq 2 \|\Q^n_+(h,\nabla h)\|_{L^\infty}
\leq 4 \, n^{1+\g} \, \|b_n\|_{\L}\,\|h\|_{L^1} \, \|\nabla h\|_{L^\infty}.$$
Consequently
$$\|\Q^n_+(h,h)\|_{W^{1,\infty}} \leq 4\, n^{1+\g} \, \|b_n\|_{\L}\,
\|h\|_{L^1} \, \|h\|_{W^{1,\infty}}.$$
In the same way, since $\frac{\d}{\d r}\Phi_n(r)  \leq \gamma n^{\gamma-1} \leq 1$, one checks easily that
$$\|\LLn(t,\cdot)\|_{W^{1,\infty}} \leq 2n^\gamma
\|b_n\|_{\L}\|h(t)\|_{L^1}  \leq 2\,\frac{\mu_n}{\alpha}\, \|\psi_0\|_{L^1} \qquad  \forall t \in [0,T], h \in \H.$$
Recall now the expression of the solution $\psi=\psi^1+\psi^2$ given in
\eqref{sol}. It is easy to see that, for any $t \in [0,T]$
\begin{multline*}
 \|\psi^1(t)\|_{W^{1,\infty}} \leq \exp\left(-\int_0^t \mathbf{A}_h^n(\tau)\d\tau\right)\|\psi_0\|_{L^\infty}+  \exp\left(-\int_0^t (\mathbf{A}_h^n(\tau)+\mathbf{B}_h^n(\tau))\d\tau\right)\|\nabla_\xi \psi_0\|_{L^\infty} \\
+\|\psi_0\|_{L^\infty}  \exp\left(-\int_0^t \mathbf{A}_h^n(\tau)\d\tau\right)\int_0^t\exp\left(-\int_\tau^t \mathbf{B}_h^n(s)\d s\right)\|\nabla_\xi \LLn(\tau,\cdot)\|_{L^\infty}\d \tau
\end{multline*}
so that, using again Lemma \ref{lem1}:
\begin{equation*}\begin{split}
\|\psi^1(t)\|_{W^{1,\infty}} &\leq \exp\left(\frac{\mu_n(d+3)}{2}M_1\,t\right)\|\psi_0\|_{W^{1,\infty}}  \\
&\phantom{+++++}+\frac{2}{\alpha}\mu_n  \|\psi_0\|_{L^1}\, \|\psi_0\|_{L^\infty}  \exp\left( \frac{\mu_n(d+2)}{2}M_1 t\right) \int_0^t \exp\left(\frac{\mu_n}{2}M_1(t-\tau)\right)\d \tau \\
\end{split}\end{equation*}
i.e.
$$\|\psi^1(t)\|_{W^{1,\infty}} \leq \max\left(1,\frac{4 \|\psi_0\|_{L^1}}{\alpha M_1}\right)
\exp\left(\frac{\mu_n (d+3)}{2}M_1\,t\right)\|\psi_0\|_{W^{1,\infty}} \qquad \forall t \in [0,T].$$
In the same way,
\begin{equation*}\begin{split}
 \|\psi^2(t)\|_{W^{1,\infty}} &\leq (1-\alpha)\max(1, \tfrac{4 \|\psi_0\|_{L^1}}{\alpha M_1}) \int_0^t \exp\left(\frac{\mu_n(d+3)}{2}M_1(t-s)\right)\|\Q_+^n(h,h)(s)\|_{W^{1,\infty}}\d s\\
&\leq (1-\alpha)\max(1,\tfrac{4 \|\psi_0\|_{L^1}}{\alpha M_1})\frac{8n^{1+\gamma} \|b_n\|_{\L}\,\ell}{\mu_n (d+3)} \left[\exp\left(\frac{\mu_n(d+3)}{2}M_1 t\right)-1\right].
\end{split}
\end{equation*}
Consequently,
\begin{equation}\label{Winfty}\begin{split}
\sup_{t \in [0,T]}\|\psi(t)\|_{W^{1,\infty}} &\leq  \max\left(1,\frac{4 \|\psi_0\|_{L^1}}{\alpha M_1}\right) \exp\left(\frac{\mu_n(d+3)}{2}M_1\,T\right)\|\psi_0\|_{W^{1,\infty}} \\
&+ \max\left(1,\frac{4 \|\psi_0\|_{L^1}}{\alpha M_1}\right)\frac{1-\alpha}{\alpha} \frac{8n\,\ell \|\psi_0\|_{L^1}}{M_1(d+3)} \left[\exp\left(\frac{\mu_n(d+3)}{2}M_1 T\right)-1\right].\end{split}\end{equation}
Now, from \eqref{density}, \eqref{r=2}, \eqref{r=2+d} and \eqref{Winfty}, one sees that, choosing for instance
 $M_1=4 \|\psi_0\|_{L^1}$,
$$M_2=4\int_{ \R^d} \psi_0(\xi)\,|\xi|^2 \, \d\xi, \qquad
C_\delta= 4 \int_{\R^d} \psi_0(\xi) \, |\xi|^{2+\delta}\, \d\xi, \qquad
\ell= \frac{4}{\alpha} \, \|\psi_0\|_{W^{1,\infty}} $$
and
\begin{multline*}
T=\frac{2}{\mu_n \, M_1 }\; \min\Bigg\{\frac{\log 2
}{(4+\delta)},\;
\frac{1}{(4+\delta)}\;\log\left(1+\frac{\alpha\,
    (4+\delta)}{(1-\alpha)\,2^{2+\delta}}\right),\;
\frac{1}{2}\;\log\left(1+\frac{\alpha\, M_1}{2(1-\alpha)}\right),\\
\frac{\log 2 }{d+3}, \frac{|\log(1-\alpha)|}{4}\,,\;\frac{1}{d+3}\;\log\left(1+\frac{\alpha^2(d+3)}{4\,n\, (1-\alpha)}
\right) \Bigg\},
\end{multline*}
we get that $\psi \in \H$, i.e. with the above choice of the parameters
$M_1,M_2,C_\delta,\ell,T$, one has  $\T(\H) \subset \H$ (notice that with this
choice, $\mu_n\, M_1=\nu_n\, M_2$). Moreover, one can prove the following:
\begin{prop} \label{cont} The mapping $\,\T\::\:\H \to \C([0,T],L^1(\R^d))$ is continuous for the topology induced by $\C([0,T],L^1(\R^d))$.   More precisely, for any $R_1>0$ and $R_2>0$,  there exist some constants $K>0$ (independent of $R_1$ and $R_2$), $K'$ (independent of $R_2$) and $C_{R_1,R_2} >0$ such that, for any $h_1,h_2 \in \H$,
\begin{equation}\label{stability}\sup_{t \in [0,T]} \left\|\T(h_1)(t)-\T(h_2)(t)\right\|_{L^1} \leq C_{R_1,R_2} \sup_{t \in [0,T]}\left\|h_1(t)-h_2(t)\right\|_{L^1} + \frac{K}{R_1^2}+\frac{K'}{R_2^\delta}.\end{equation}
Moreover, $\T(\H)$ is a relatively compact subset of  $\C([0,T],L^1_2(\R^d))$.
\end{prop}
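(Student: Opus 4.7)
The plan is to split the proof into two parts---the Lipschitz bound \eqref{stability} and the relative compactness of $\T(\H)$ in $\mathcal{C}([0,T],L^1_2(\R^d))$---and to exploit throughout the representation formula \eqref{sol} together with the fact that, for every $h\in \H$, the truncation produces universal bounds on every ingredient of \eqref{sol}: the inequality $\Q_-^n(h,h)\leq n^\gamma\|b_n\|_1 M_1\,h$ pointwise, the scalar bounds of Lemma \ref{lem1}, and the $W^{1,\infty}$ control \eqref{Winfty} all apply uniformly on $\H$.

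For the Lipschitz estimate, the key observation is that each ingredient of \eqref{sol} is $L^1_2$-Lipschitz in $h$, with constants depending only on $n$, $T$, $\H$ and $\psi_0$. I would first establish the scalar bounds
$$|\mathbf{A}_{h_1}^n(t)-\mathbf{A}_{h_2}^n(t)|+|\mathbf{B}_{h_1}^n(t)-\mathbf{B}_{h_2}^n(t)|+\|\mathbf{C}_{h_1}^n(t,\cdot)-\mathbf{C}_{h_2}^n(t,\cdot)\|_{L^\infty}\leq C_n\,\|h_1(t)-h_2(t)\|_{L^1_2},$$
which follow from the bilinearity $\Q_-^n(h,h)=h\,\mathbf{C}_h^n$, from $\Phi_n\leq n^\gamma$, and from the universal bound $\|h_i(t)\|_{L^1_2}\leq M_1+M_2$. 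Feeding the $\mathbf{B}$ bound into the ODE \eqref{cara} and applying Gronwall yields $|X_{h_1}(s;t,\xi)-X_{h_2}(s;t,\xi)|\leq C_n|\xi|\sup_\tau\|h_1(\tau)-h_2(\tau)\|_{L^1_2}$, after which the $W^{1,\infty}$-regularity of $\psi_0$ controls $|\psi_0(X_{h_1}(0;t,\xi))-\psi_0(X_{h_2}(0;t,\xi))|$ pointwise. The gain operator satisfies $\|\Q_+^n(h_1,h_1)-\Q_+^n(h_2,h_2)\|_{L^1_2}\leq C_n\|h_1-h_2\|_{L^1_2}$ by bilinearity and the truncation. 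Writing $\T(h_1)-\T(h_2)$ as a telescoping sum of six differences---one for each place where $h$ enters \eqref{sol}---and integrating $\langle\xi\rangle^2\,\d\xi$ against it then yields \eqref{stability}.

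For the relative compactness of $\T(\H)$ in $\mathcal{C}([0,T],L^1_2(\R^d))$, I would apply Arzel\`a--Ascoli. Pointwise compactness in $L^1_2$ for each fixed $t$: tightness comes from $\sup_t\int\T(h)(t,\xi)|\xi|^{2+\delta}\,\d\xi\leq C_\delta$, which gives the tail bound $\int_{|\xi|>R}\T(h)(t,\xi)\langle\xi\rangle^2\,\d\xi\leq C_\delta R^{-\delta}$, while on each ball $B_R$ the uniform $W^{1,\infty}$-estimate \eqref{Winfty} combined with Arzel\`a--Ascoli places $\{\T(h)(t)|_{B_R}\}$ compactly in $C(B_R)$, hence in $L^1(B_R)$. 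Equicontinuity in $t$ is obtained directly from \eqref{sol} by differentiating in $t$ the characteristic flow, the exponential weights and the time-integral, each of which contributes a factor $\mathcal{O}(|t-s|)$ with constants uniform in $h\in\H$ thanks to Lemma \ref{lem1}. The main difficulty of the whole proposition lies in the bookkeeping of the Lipschitz step: $h$ enters \eqref{sol} through the characteristic, through two exponential weights and through the source, and these occurrences must be linearized simultaneously; the telescoping decomposition turns this into six scalar estimates, each of which combines one of the pointwise Lipschitz bounds above with the moment and $W^{1,\infty}$ bounds encoded in $\H$.
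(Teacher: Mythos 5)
Your route---estimating $\T(h_1)-\T(h_2)$ directly on the Duhamel/characteristics formula \eqref{sol}---is genuinely different from the paper's (which writes the equation satisfied by the difference, multiplies by $\mathrm{sign}(\cdot)\langle\xi\rangle^2$, integrates, and closes with Lemma \ref{ABC}, Lemma \ref{lem1} and Gronwall), and it has a real gap exactly at the Lipschitz claim \eqref{stability}. In your telescoping, every occurrence of $h$ inside a composition with the flow produces, after linearization, the factor $|X_{h_1}(s;t,\xi)-X_{h_2}(s;t,\xi)|\le C_n\,|\xi|\,\sup_\tau\|h_1(\tau)-h_2(\tau)\|_{L^1_2}$. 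For the initial-datum term you then invoke only $\|\nabla\psi_0\|_{L^\infty}$, so your pointwise bound is $C_n\|\nabla\psi_0\|_{L^\infty}|\xi|\,\sup_\tau\|h_1-h_2\|_{L^1_2}$, and this cannot be ``integrated against $\langle\xi\rangle^2\,\d\xi$'': $|\xi|\langle\xi\rangle^2$ is not integrable and no decaying factor is left in that term (the exponential weights are merely bounded). The same unintegrable $|\xi|$ appears when you compare $\mathbf{C}^n_{h}(\tau,X_{h_1})$ with $\mathbf{C}^n_{h}(\tau,X_{h_2})$ inside the exponentials, and $\Q^n_+(h_2,h_2)(s,X_{h_1})$ with $\Q^n_+(h_2,h_2)(s,X_{h_2})$ in the source. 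This is not just bookkeeping: since the characteristics are dilations, your scheme needs the map $a\mapsto g(e^{-a}\cdot)$ to be Lipschitz from $\R$ into $L^1_2$ for $g=\psi_0$ (and for $g=\Q^n_+(h,h)$), and under the standing hypotheses $g\in W^{1,\infty}\cap L^1_{2+\delta}$ with small $\delta$ this is false in general (a sequence of bumps of height $c_k\sim 2^{-k}R_k^{-2-\delta}$ at radii $R_k\to\infty$ chosen with $R_k^{1-\delta}=4^k$ yields only a H\"older-type modulus). Interpolating your pointwise bound with the $L^1_{2+\delta}$ tails therefore gives continuity of $\T$ (enough for Schauder), but not the quantitative estimate \eqref{stability}, which is also what the paper uses for uniqueness of the truncated problem. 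The paper's proof avoids the issue because the drift is handled by integration by parts in the equation for the difference, so the flow enters only through the bounded scalar coefficients of Lemma \ref{lem1} multiplying $\|\cdot\|_{L^1_2}$-norms, and no factor $|\xi|$ ever appears detached from an integrable density; to salvage your route you would need extra hypotheses (e.g.\ $|\xi|\,\nabla\psi_0\in L^1_2$ and weighted gradient control of $\Q^n_+(h,h)$) that are not encoded in $\H$.

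On the compactness part, your Arzel\`a--Ascoli scheme (pointwise-in-$t$ relative compactness in $L^1_2$ from the uniform $L^1_{2+\delta}\cap W^{1,\infty}$ bounds, plus time equicontinuity) is a legitimate alternative to the paper's use of \cite[Corollary 4]{Sim} with $\partial_t\T(\H)$ bounded in $L^r((0,T);(H^m(\R^d))')$; but note that your claimed $\mathcal{O}(|t-s|)$ modulus suffers from the same pointwise $|\xi|$-growth, so it should be stated as a uniform (not Lipschitz) modulus obtained after splitting $\{|\xi|\le R\}$ and $\{|\xi|>R\}$ and using the $2+\delta$ moments---which is all Arzel\`a--Ascoli requires, so that half is repairable as written.
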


In the proof of the above Proposition, we shall use the following result which
is very classical:

\begin{lem}\label{ABC} Let $h_1,h_2\in \C([0,T],L^1_2(\R^d))$. Then,
$$\|\LLN(t,\cdot)-\LLM(t,\cdot)\|_{L^\infty}
\leq \|b_n\|_{\L} \|\Phi_n\|_{L^\infty} \|h_1(t)-h_2(t)\|_{L^1} \qquad \forall t  >0.$$
Consequently, the following hold for any $t >0:$
\begin{multline*}|\mathbf{B}^n_{h_1}(t)-\mathbf{B}^n_{h_2}(t)|  \leq
\frac{\alpha\|b_n\|_{\L}\|\Phi_n\|_{L^\infty}}{2}\left(\|h_1(t)\|_{L^1_2}+
  \|h_2(t)\|_{L^1_2} \right) \\
\times \|h_1(t)-h_2(t)\|_{L^1_2}
\left(\frac{1}{\int_{\R^d} \psi_0(\xi)\,|\xi|^2\, d\xi}+ \frac{1}{\|\psi_0\|_{L^1}}\right),\end{multline*}
and
\begin{multline*}|\mathbf{A}^n_{h_1}(t)-\mathbf{A}^n_{h_2}(t)|
\leq \frac{\alpha\|b_n\|_{\L}\|\Phi_n\|_{L^\infty}}{2}\left(\|h_1(t)\|_{L^1_2}+
  \|h_2(t)\|_{L^1_2} \right)\\
\times\|h_1(t)-h_2(t)\|_{L^1_2}\left(\frac{d}{\int_{\R^d} \psi_0(\xi)\,|\xi|^2\, d\xi}+ \frac{d+2}{\|\psi_0\|_{L^1}}\right).\end{multline*}
Moreover, for $t \leq s$,
$$ \left| X_{h_1}(t;s,\xi)- X_{h_2}(t;s,\xi)\right|\leq |\xi|\,  \exp\left(\frac{\mu_n}{2} \; M_1\, (s-t) \right) \left|\int_s^t (\mathbf{B}^n_{h_1}(\tau)-\mathbf{B}^n_{h_2}(\tau))\, \d\tau\right|.$$
\end{lem}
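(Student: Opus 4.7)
The estimate for $\mathbf{C}^n$ is the starting point and is immediate from the integral representation. Writing
\[
\mathbf{C}^n_{h_1}(t,\xi)-\mathbf{C}^n_{h_2}(t,\xi) = \|b_n\|_1\int_{\R^d} \Phi_n(|\xi-\xi_*|)\,\big(h_1(t,\xi_*)-h_2(t,\xi_*)\big)\,\d\xi_*,
\]
I would bound $\Phi_n$ by $\|\Phi_n\|_\infty$ pointwise, pull it out, and take sup in $\xi$. This gives the first inequality. In particular, the same computation with $h_2$ in place of $(h_1-h_2)$ yields the auxiliary pointwise bound $\|\mathbf{C}^n_{h_2}(t,\cdot)\|_{L^\infty}\leq \|b_n\|_1\|\Phi_n\|_\infty\|h_2(t)\|_{L^1}$, which I will reuse.

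Next, for $\mathbf{A}^n$ and $\mathbf{B}^n$, the key observation is that $\Q^n_-(h,h)=h\,\mathbf{C}^n_h$, so the difference telescopes as
\[
\Q^n_-(h_1,h_1)-\Q^n_-(h_2,h_2) = h_1\big(\mathbf{C}^n_{h_1}-\mathbf{C}^n_{h_2}\big) + (h_1-h_2)\,\mathbf{C}^n_{h_2}.
\]
Combining the first part of the lemma with the auxiliary bound above, I obtain the pointwise estimate
\[
\big|\Q^n_-(h_1,h_1)-\Q^n_-(h_2,h_2)\big|(t,\xi)\leq \|b_n\|_1\|\Phi_n\|_\infty\Big(h_1(t,\xi)\,\|h_1(t)-h_2(t)\|_{L^1}+|h_1-h_2|(t,\xi)\,\|h_2(t)\|_{L^1}\Big).
\]

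Now I integrate this pointwise bound against the weight appearing in the definition of $\mathbf{B}^n_\psi$. Setting $c_1=1/\|\psi_0\|_1$ and $c_2=1/\int_{\R^d}\psi_0|\xi_*|^2\d\xi_*$, I control the weight in absolute value by $c_1+c_2|\xi|^2\leq (c_1+c_2)(1+|\xi|^2)$, which is exactly $\langle\xi\rangle^2$ times the factor appearing in the statement. Using $\|h\|_{L^1}\leq\|h\|_{L^1_2}$ and $\int h(\xi)\langle\xi\rangle^2\d\xi=\|h\|_{L^1_2}$, the integration yields
\[
|\mathbf{B}^n_{h_1}(t)-\mathbf{B}^n_{h_2}(t)|\leq \frac{\alpha}{2}\|b_n\|_1\|\Phi_n\|_\infty(c_1+c_2)\big(\|h_1(t)\|_{L^1_2}+\|h_2(t)\|_{L^1_2}\big)\,\|h_1(t)-h_2(t)\|_{L^1_2},
\]
which is the announced inequality. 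The estimate on $\mathbf{A}^n_{h_1}-\mathbf{A}^n_{h_2}$ is identical, simply replacing $c_1$ and $c_2$ by $(d+2)/\|\psi_0\|_1$ and $d/\int_{\R^d}\psi_0|\xi_*|^2\d\xi_*$ respectively, which produces precisely the factor in the statement.

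No step presents a real difficulty: the argument only combines the elementary $L^\infty$ bound on the difference of the linearised loss rates with the symmetric splitting of the quadratic operator. The only mildly delicate point is the weight manipulation $c_1+c_2|\xi|^2\leq (c_1+c_2)\langle\xi\rangle^2$, which lets one regroup everything into a single $L^1_2$-factor matching the form stated in the lemma.
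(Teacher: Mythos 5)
Your proof is correct and is exactly the standard argument the paper leaves implicit (the lemma is stated there without proof as ``very classical''): the $L^\infty$ bound on $\mathbf{C}^n_{h_1}-\mathbf{C}^n_{h_2}$, the splitting $\Q^n_-(h_1,h_1)-\Q^n_-(h_2,h_2)=h_1(\mathbf{C}^n_{h_1}-\mathbf{C}^n_{h_2})+(h_1-h_2)\mathbf{C}^n_{h_2}$, and the weight estimate $c_1+c_2|\xi|^2\leq (c_1+c_2)\langle\xi\rangle^2$. The only cosmetic point is to write $|h_1(t,\xi)|$ in the pointwise bound, since $h_1,h_2\in\C([0,T],L^1_2(\R^d))$ are not assumed nonnegative there.
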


\begin{proof}[Proof of Proposition \ref{cont}] Given $h_1,h_2\in \H$, we set for simplicity $X_i=X_{h_i}$, $\mathbf{A}_i^n=\mathbf{A}_{h_i}^n$ and $\mathbf{B}_i^n=\mathbf{B}_{h_i}^n$, for $i\in\{1,2\}$. We then deduce from \eqref{sol} that
\beq\label{somme}
\|\T(h_1)(t) -\T(h_2)(t) \|_{L^1} \leq  \mathcal{J}_1+  \mathcal{J}_2 +  \mathcal{J}_3 +  \mathcal{J}_4 + \mathcal{J}_5,
\eeq
where
\bean
 \mathcal{J}_1 &: =& \int_{\R^d} \left|\psi_0\left(X_1(0;t,\xi)\right)-\psi_0\left(X_2(0;t,\xi)\right) \right| \\
& & \hspace{4cm} \exp\left(-\int_0^t \left[\mathbf{A}_1^n(\tau)+\LLN\left(\tau,X_1(\tau;t,\xi)\right)\right] \d\tau\right)  \d\xi \\
\mathcal{J}_2 & :=& \int_{\R^d} \psi_0\left(X_2(0;t,\xi)\right)
\left| \exp\left(-\int_0^t \left[\mathbf{A}_1^n(\tau)+\LLN\left(\tau,X_1(\tau;t,\xi)\right)\right] \d\tau\right) \right. \\
& & \hspace{4cm}\left. -\exp\left(-\int_0^t \left[\mathbf{A}_2^n(\tau)+\LLM\left(\tau,X_2(\tau;t,\xi)\right)\right] \d\tau\right)\right|   \d\xi \\
\mathcal{J}_3 & :=&  \int_0^t \int_{\R^d} \left| \Q_+^n(h_1,h_1)\left(s,X_1(s;t,\xi)\right) -  \Q_+^n(h_2,h_2)\left(s,X_1(s;t,\xi)\right) \right| \\
& & \hspace{4cm}   \exp\left(-\int_s^t \left[\mathbf{A}_1^n(\tau)
+\LLN\left(\tau,X_1(\tau;t,\xi)\right)\right] \d\tau\right) \d\xi \, \d s\\
 \mathcal{J}_4 & :=&  \int_0^t \int_{\R^d} \left| \Q_+^n(h_2,h_2)\left(s,X_1(s;t,\xi)\right) -  \Q_+^n(h_2,h_2)\left(s,X_2(s;t,\xi)\right) \right| \\
& & \hspace{4cm} \exp\left(-\int_s^t \left[\mathbf{A}_1^n(\tau)
+\LLN\left(\tau,X_1(\tau;t,\xi)\right)\right]\d\tau\right) \d\xi \, \d s\\
 \mathcal{J}_5 & :=&  \int_0^t \int_{\R^d} \Q_+^n(h_2,h_2)\left(s,X_2(s;t,\xi)\right) \left|\exp\left(-\int_s^t \left[\mathbf{A}_1^n(\tau)
+\LLN\left(\tau,X_1(\tau;t,\xi)\right)\right] \d\tau\right)\right. \\
& & \hspace{4cm}\left.
-\exp\left(-\int_s^t \left[\mathbf{A}_2^n(\tau) +\LLM\left(\tau,X_2(\tau;t,\xi)\right)\right] \d\tau\right)\right| \d\xi \, \d s.
\eean
Let us estimate these five terms separately. Let $R_1>0$. First, since $\psi_0\in W^{1,\infty}(\R^d)$ and $\LLN$ is nonnegative, it follows from  Lemma \ref{lem1} that
\bean
\mathcal{J}_1 & \leq & \| \psi_0\|_{W^{1,\infty}}
\exp\left(\frac{T\,\mu_n\,(d+2)\,M_1}{2}\right)
\int_{|\xi|\leq R_1} \left|X_1(0;t,\xi)-X_2(0;t,\xi) \right|  \d\xi \\
& +  & \frac{1}{R_1^2} \int_{\R^d} \left(\psi_0\left(X_1(0;t,\xi)\right)+\psi_0\left(X_2(0;t,\xi)\right) \right) \exp\left(-\int_0^t \mathbf{A}_1^n(\tau)\, \d\tau\right) |\xi|^2\, \d\xi.
\eean
Now, by a simple change of variable, the use of Lemma \ref{ABC} and Lemma \ref{lem1} leads to\bean
\mathcal{J}_1 & \leq & \frac{R_1^{d+1}}{{d+1}}\,|\mathbb{S}^{d-1}|\, \| \psi_0\|_{W^{1,\infty}}\exp\left(\frac{T\,\mu_n\,(d+3)\,M_1}{2}\right)
\left|\int_0^t (\mathbf{B}^n_1(\tau)-\mathbf{B}^n_2(\tau))\, \d\tau\right| \\
& +  & \frac{2}{R_1^2}\;   \| \psi_0\|_{L^1_2}
 \exp\left(\frac{\mu_n\, (d+2)}{2} M_1 T+\frac{\nu_n\, (d+2)}{2} M_2 T \right).
\eean
We then deduce from Lemma \ref{ABC} the existence of some constants $C_{1,R_1}>0$
and $K_1>0$ (independent of $R_1$) such that
\beq\label{J1}
\mathcal{J}_1 \leq  C_{1,R_1} \sup_{t\in[0,T]} \|h_1(t)-h_2(t)\|_{L^1_2}
+\frac{K_1}{R_1^2}.
\eeq
Let us turn our attention to $\mathcal{J}_2$. One deduces from the mean value theorem and Lemma \ref{lem1} that
\begin{multline}\label{decomp_J2}
\mathcal{J}_2 \leq \exp\left(\frac{\mu_n\,(d+2)}{2}\; M_1 T\right) \int_{\R^d} \psi_0\left(X_2(0;t,\xi)\right)  \left( \int_0^t \left|\mathbf{A}_1^n(\tau)-\mathbf{A}_2^n(\tau)\right| \d\tau \right.\\
+ \left. \int_0^t \left|\LLN\left(\tau,X_1(\tau;t,\xi)\right)- \LLM\left(\tau,X_1(\tau;t,\xi)\right)\right| \d\tau  \right.\\
+ \left. \int_0^t \left|\LLM\left(\tau,X_1(\tau;t,\xi)\right)- \LLM\left(\tau,X_2(\tau;t,\xi)\right)\right| \d\tau \right) \d\xi.
\end{multline}
But, for $j\in\{1,2\}$, a change of variables leads to
\begin{multline*}
\LLM\left(\tau,X_j(\tau;t,\xi)\right)= \|b_n\|_{\L} \exp\left(-d \int_\tau^t \mathbf{B}_j^n(s) \d s\right) \\
\int_{\R^d} \Phi_n\left(|X_j(\tau;t,\xi-\xi_*)|\right)\,
 h_2(\tau,X_j(\tau;t,\xi_*)) \d\xi_*.
\end{multline*}
Thus, since $h_2\in W^{1,\infty}(\R^d)$ and since, for any $\lambda_1,\lambda_2,r\geq 0$,
$$ \left|\Phi_n(\lambda_1\, r)-\Phi_n(\lambda_2\, r)\right| \leq |\lambda_1^\gamma-\lambda_2^\gamma| \, r^\gamma,$$
we obtain, in virtue of  Lemma \ref{ABC},
\bean
& & \hspace{-5mm}
\left|\LLM\left(\tau,X_1(\tau;t,\xi)\right)- \LLM\left(\tau,X_2(\tau;t,\xi)\right)\right| \\
& & \hspace{5mm} \leq d\, \|b_n\|_{\L}\, \|\Phi_n\|_{L^\infty} \, \| h_2\|_{L^1}\,  e^{\frac{d\,\mu_n}{2} \; M_1\, T} e^{\frac{d\,\nu_n}{2} \; M_2\, T} \left|\int_\tau^t (\mathbf{B}^n_1(s)-\mathbf{B}^n_{h_2}(s)) \d s\right|  \\
& & \hspace{5mm} + \|b_n\|_{\L}\, \|\Phi_n\|_{L^\infty} \, \| h_2\|_{W^{1,\infty}}\,  e^{\frac{(d+1)\, \mu_n}{2} \; M_1\, T} \left|\int_\tau^t (\mathbf{B}^n_1(s)-\mathbf{B}^n_{h_2}(s)) \d s\right| \int_{|\xi_*|\leq R_1} |\xi_*| \, \d \xi_* \\
& & \hspace{5mm} + \frac{1}{R_1^2} \; \|b_n\|_{\L}\,\|\Phi_n\|_{L^\infty} \,e^{\frac{d\,\mu_n}{2} \; M_1\, T} \int_{\R^d} \left(h_2(\tau,X_1(\tau;t,\xi_*))+  h_2(\tau,X_2(\tau;t,\xi_*)) \right)\, |\xi_*|^2\, \d\xi_* \\
& & \hspace{5mm} + \gamma \, \|b_n\|_{\L}  e^{\frac{(d+\gamma) \,\mu_n}{2} \: M_1\, T} \left|\int_\tau^t (\mathbf{B}^n_1(s)-\mathbf{B}^n_2(s)) \d s\right|
 \int_{\R^d} |\xi-\xi_*|^\gamma \, h_2(\tau,X_2(\tau;t,\xi_*)) \d\xi_*.
\eean
A change of variables and Lemma \ref{ABC} then lead to the existence of some constants  $C_{\ell,R_1}>0$ and $K_\ell>0$ (independent of $R_1$) such that
\beq \label{L}
\left|\LLM\left(\tau,X_1(\tau;t,\xi)\right)- \LLM\left(\tau,X_2(\tau;t,\xi)\right)\right|\leq   \langle\xi\rangle^\gamma C_{\ell,R_1} \sup_{t\in[0,T]} \|h_1(t)-h_2(t)\|_{L^1_2}
+\frac{K_\ell}{R_1^{2}}.
\eeq
Gathering \eqref{decomp_J2}, \eqref{L} and Lemma \ref{ABC}, we deduce that there
exist some constants $C_{2,R_1}>0$ and $K_2>0$ (independent of $R_1$) such that
\beq\label{J2}
\mathcal{J}_2 \leq  C_{2,R_1} \sup_{t\in[0,T]} \|h_1(t)-h_2(t)\|_{L^1_2}
+\frac{K_2}{R_1^{2}}.
\eeq
Performing the same manipulations for $\mathcal{J}_5$, one may show that there exist some constants $C_{5,R_1}>0$ and $K_5>0$ (independent of $R_1$) such that
\beq\label{J5}
\mathcal{J}_5 \leq  C_{5,R_1} \sup_{t\in[0,T]} \|h_1(t)-h_2(t)\|_{L^1_2}
+\frac{K_5}{R_1^{2}}.
\eeq
Then,
$$ \mathcal{J}_4 \leq  e^{\frac{\mu_n\,(d+2)}{2}\: M_1\, T} \int_0^t \int_{\R^d} \left| \Q_+^n(h_2,h_2)\left(s,X_1(s;t,\xi)\right) -  \Q_+^n(h_2,h_2)\left(s,X_2(s;t,\xi)\right) \right| \d\xi \, \d s,  $$
and, changing variables, we get, for $j\in\{1,2\}$,
\begin{multline*}
 \Q_+^n(h_2,h_2)\left(s,X_j(s;t,\xi)\right) =
\exp\left(-d \int_s^t \mathbf{B}_j^n(s) \d s\right)
\int_{\R^d}\int_{\mathbb{S}^{d-1}} b_n(\cos\theta)\,  \Phi_n(|X_j(s;t,\xi-\xi_*)|) \\
h_2(s,X_j(s;t,\xi'))\,  h_2(s,X_j(s;t,\xi'_*))\d\sigma\, d\xi_*.
\end{multline*}
 Thus, proceeding as for $\LLM$, one may prove that there exist some constants
$C_{4,R_1}>0$ and $K_4>0$ (independent of $R_1$) such that
\beq\label{J4}
\mathcal{J}_4 \leq  C_{4,R_1} \sup_{t\in[0,T]} \|h_1(t)-h_2(t)\|_{L^1_2}
+\frac{K_4}{R_1^{2}}\,.
\eeq
For the last integral, we have
\bea
\mathcal{J}_3 & \leq & e^{\mu_n\, M_1\, T} \int_0^t \int_{\R^d} \left| \Q_+^n(h_1-h_2,h_1)(s,\xi)\right| +  \left| \Q_+^n(h_2,h_1-h_2)(s,\xi) \right| \d\xi \, \d s \nonumber\\
& \leq & C_3  \sup_{t\in[0,T]} \|h_1(t)-h_2(t)\|_{L^1}  \label{J3}
\eea
for some constant $C_3>0$. Finally, gathering \eqref{somme}, \eqref{J1}, \eqref{J2}, \eqref{J5}, \eqref{J4}, \eqref{J3} and, noticing that, for $R_2>0$,
$$ \|h_1(t)-h_2(t)\|_{L^1_2} \leq (1+R_2^2)\, \|h_1(t)-h_2(t)\|_{L^1} + \frac{1}{R_2^\delta} \left(\|h_1(t)\|_{L^1_{2+\delta}}+ \|h_2(t)\|_{L^1_{2+\delta}}\right)$$
this completes the proof of \eqref{stability}.  Let us now prove the compactness of $\T(\H)$. Recall that, according to Riesz-Fréchet-Kolmogorov Theorem, the embedding $$L^1_{2+\delta}(\R^d)\cap W^{1,\infty}(\R^d) \subset L^1_2(\R^d)$$ is compact. Moreover,  $L^1_2(\R^d)$ is continuously embedded into $\left(H^m(\R^d)\right)'$ for $m>d/2$. On the other hand,
$$\T(\H) \mbox{ is a bounded subset of  }
L^\infty\left((0,T); L^1_{2+\delta}(\R^ d)\cap W^{1,\infty}(\R^d)\right)$$
and, setting $\partial_t \T(\H)=\{\partial_t \psi\,;\,\psi=\T(h),\,h \in \H\}$, one has
$$\partial_t\T(\H) \mbox{ is a bounded subset of  } L^r((0,T);(H^m(\R^d))'),$$
with $r>1$. As a consequence, one can apply \cite[Corollary 4]{Sim} to conclude that $\T(\H)$
is a relatively compact subset of $\C([0,T]; L^1_2(\R^d))$.\end{proof}

We are in position to conclude the proof of Theorem \ref{cauchypb}.

\begin{proof}[Proof of Theorem \ref{cauchypb}]  The proof is split into two parts: the first one consists in proving the well-posedness of the Cauchy problem \eqref{annihi} on the time interval $[0,T]$ (where $T >0$ has been defined hereabove) through \textit{Schauder fixed point theorem}. The second part consists in extending this solution to a global solution.\\

\noindent{\it Local existence:} Since $\H$ is a closed bounded (nonempty) subset of
$\C([0,T]; L^1_2(\R^d))$ and since $\T$ is a continuous and compact application from $\H$ to $\H$, Schauder fixed point theorem ensures the existence of some fixed point $\psi^1$ of $\T$, i.e. there exists $\psi^1  \in \C([0,T]; L^1_2(\R^d))\cap L^\infty((0,T); L^1_{2+\delta}(\R^d)\cap
W^{1,\infty}(\R^d))$ solution to \eqref{annihi}.\\

\noindent{\it Global existence:} Integrating the equation \eqref{annihi} over $\R^d$, we get
$$\frac{\d}{\d t} \int_{ \R^d}  \psi^1(t,\xi)\, \d\xi
= \frac{\alpha}{\|\psi_0\|_{L^1}}  \left(\int_{ \R^d}\Q_-^n(\psi^1,\psi^1)(t,\xi)\, \d\xi \right)
\left( \int_{ \R^d} \psi^1(t,\xi) \, \d\xi- \|\psi_0\|_{L^1}\right).$$
Since $\displaystyle \int_{ \R^d} \psi^1(0,\xi) \, \d\xi=\|\psi_0\|_{L^1}$, we see that the density of $\psi^1$ is conserved:
$$\int_{ \R^d} \psi^1(t,\xi) \, \d\xi= \int_{ \R^d} \psi_0(\xi) \, \d\xi \qquad \forall t \in [0,T].$$

In the same way, multiplying \eqref{annihi} by  $|\xi|^2$ and integrating over $\R^d$ yields
$$\frac{\d}{\d t}  \int_{ \R^d}  \psi^1(t,\xi)\, |\xi|^2\,  \d\xi
= \alpha \left(\int_{ \R^d} |\xi|^2 \, \Q_-^n(\psi^1,\psi^1)(t,\xi)\, \d\xi
\right) \left( \frac{\int_{ \R^d} \psi^1(t,\xi) \,|\xi|^2\,  \d\xi}{\int_{
      \R^d} \psi_0(\xi)\,|\xi|^2 \, \d\xi}-1 \right).$$
Since $\displaystyle \int_{ \R^d} \psi^1(0,\xi)\,|\xi|^2 \, \d\xi=\int_{ \R^d} \psi_0(\xi)\,|\xi|^2 \, \d\xi$, the energy of $\psi^1(t,\xi)$ is conserved:
$$\int_{ \R^d} \psi^1(t,\xi) \,|\xi|^2 \, \d\xi=\int_{ \R^d} \psi_0(\xi)\,|\xi|^2 \, \d\xi  \qquad \forall t \in [0,T].$$
Thus, $\psi^1(T,.)$ has the same mass and energy as $\psi_0$. Since the time
$T$ only depends on these values, by a standard continuation argument, we
construct a global solution $\psi$ to \eqref{annihi}. \end{proof}

\subsection{Uniform estimates}

In order to prove Theorem \ref{well_posedness}, we now need to get rid
of the bound in $W^{1,\infty}(\R^d)$ for the initial condition and to pass to
the limit as $n\to +\infty$.

Let  $p>1$. Let $\psi_0\in L^1_{2+\gamma}(\R^d)\cap L^p(\R^d)$ be
a nonnegative distribution function satisfying \eqref{massenergie}. There
exists a sequence of nonnegative functions $(\psi_0^n)_{n\in\N}$ in
$W^{1,\infty}(\R^d)\cap L^1_{2+\gamma}(\R^d)$  that converges to $\psi_0$ in $L^1_2(\R^d)$ and that satisfies, for any $n\in\N$,
$$ \|\psi_0^n\|_{L^1}\leq \|\psi_0\|_{L^1}\quad \mbox{ and } \quad\|\psi_0^n\|_{L^p}\leq \|\psi_0\|_{L^p}.$$
Moreover, if $\psi_0\in L^1_s(\R^d)$ with $s>2$ then one may also assume that
\beq\label{majo}
\int_{\R^d} \psi_0^n(\xi)\,|\xi|^s\, \d\xi \leq 2^{s-1} \|\psi_0\|_{L^1}
+2^{s-1} \int_{\R^d}\psi_0(\xi)\, |\xi|^s\,\d\xi.
\eeq
We infer from the above properties of $(\psi_0^n)_{n\in\N}$ and from \eqref{massenergie} that there exists some $N_0\in\N$ such that for $n\geq N_0$,
\beq\label{mino}
\frac{1}{2} \leq \int_{\R^d}\psi_0^n(\xi)\, \d\xi \leq 1 \qquad \mbox{ and }
\qquad \frac{d}{4} \leq \int_{\R^d}\psi_0^n(\xi)\,|\xi|^2\, \d\xi \leq d.
\eeq
For each $n\in\N$, we denote by $\psi_n$ a solution to \eqref{annihi} with
initial condition $\psi_0^n$. Notice that, for any given $T > 0$ and any $n \in \mathbb{N}$, the solution $\psi_n$ constructed as a "mild solution" is also a weak solution, i.e., the following holds for any $\varrho \in \mathcal{C}^1_c(\R^d)$ and any $t \geq 0$:
\begin{multline}\label{weakformun}
\int_{\R^d} \psi_n(t,\xi)\varrho(\xi)\d\xi + \int_0^t \d s
\big[\mathbf{A}_{\psi_n}^n(s)-d \mathbf{B}_{\psi_n}^n(s)\big]\int_{\R^d}  \varrho(\xi)\,
\psi_n(s,\xi)\,\d\xi \\
= \int_0^t \d s \mathbf{B}_{\psi_n}^n(s) \int_{\R^d}
\psi_n(s,\xi)\, \xi \cdot \nabla_\xi \varrho(\xi) \d\xi
+ \int_{\R^d} \varrho(\xi)\psi_0^n(\xi) \d \xi
+ \int_0^t \d s\int_{\R^d} \mathbb{B}_n(\psi_n,\psi_n)(s,\xi)\varrho(\xi)\d\xi.
\end{multline}

Our purpose is to show that $(\psi_n)_{n\in\N}$ is converging in $\C(|0,T], w-L^1(\R^d))$  for any $T>0$. However, this
requires uniform estimates on $\psi_n$. So, we now tackle this question and
show uniform bounds for moments of $\psi_n$. The underlying difficulty comes
from the two terms $\mathbf{A}_{\psi_n}^n$ and $\mathbf{B}_{\psi_n}^n$ which
already involve moments of order $2+\g$ and thereby prevent us from performing
direct estimates. In all the sequel, we shall simply set
$$\mathbf{A}_n(t)=\mathbf{A}_{\psi_n}^n(t), \qquad \mathbf{B}_n(t)=\mathbf{B}_{\psi_n}^n(t), \qquad n \in \mathbb{N},\qquad t \geq 0.$$
We begin with proving that both $\mathbf{A}_n$ and $\mathbf{B}_n$ are bounded
in $L^1_{\mathrm{loc}}(0,\infty)$. Here again we first need to show uniform
$L^p$-estimates, which is the aim of the following lemma.

\begin{lem}\label{lem:lp}
There exist some integer $N_1\geq N_0$ and some constant $C>0$ depending only
on $\alpha$, $p$, $d$ and $\gamma$ such that, for all $n\geq N_1$,
\beq \label{estim_lp}
\|\psi_n(t)\|_{L^p}\leq e^{Ct} \; \|\psi_0\|_{L^p},\qquad t\geq 0.
\eeq
\end{lem}

\begin{proof}  For $n\in\N_*$, we multiply (\ref{annihi}) by
$p\, \psi_n(t,\xi)^{p-1}$ and integrate over $\R^d$. An integration by parts
then leads to
\bea
\frac{\d}{\d t} \|\psi_n(t)\|^p_{L^p}
& = & (d \mathbf{B}_n(t)-p \mathbf{A}_n(t))\,
\|\psi_n(t)\|_{L^p}^p  \nonumber \\
& + & (1-\a)\, p \int_{\R^d}\Q_+^n(\psi_n,\psi_n)(t,\xi)\,
\psi_n(t,\xi)^{p-1}\, \d\xi \nonumber\\
& - & \a \, p \int_{\R^d}\Q_-^n(\psi_n,\psi_n)(t,\xi) \, \psi_n(t,\xi)^{p-1}\,
\d\xi. \label{LP}
\eea
First, since $p>1$, we have, for $n\geq N_0$,
\bea
d \mathbf{B}_n(t)-p \mathbf{A}_n(t)
& = & \frac{\a}{2} \int_{\R^d} \left(\frac{d(p-1)+2p}{\|\psi_0^n\|_{L^1}}
-\frac{d(p-1)|\xi|^2}{\int_{\R^d}\psi_0^n(\xi_*)\,|\xi_*|^2\, \d\xi_*}\right) \,
\Q_-^n(\psi_n,\psi_n)(t,\xi) \, \d\xi  \nonumber\\
& \leq  & \a(d(p-1)+2p) \int_{\R^d}
 \Q_-^n(\psi_n,\psi_n)(t,\xi) \, \d\xi. \label{estim_1}
\eea
But, since $\g\in(0,1]$,
\beq\label{maj_Phi_n}
\Phi_n(|\xi-\xi_*|)\leq |\xi-\xi_*|^\g \leq |\xi|^\g+|\xi_\ast|^\g.
\eeq
Consequently,
\beq \label{estim_2}
\int_{\R^d} \Q_-^n(\psi_n,\psi_n)(t,\xi) \, \d\xi
\leq 2\,  \|b_n\|_{\L} \int_{\R^d} |\xi|^\g \psi_n(t,\xi)\, \d\xi
\leq 2\, \|b\|_{\L}\, (1+d).
\eeq
Thereby, we obtain a bound for the first term in the right-hand side of
(\ref{LP}). We now need to estimate the two remaining integrals. We first
notice that, due to the symmetry, we can reduce the domain of integration with
respect to $\s$ to those $\s$ that satisfy
$\langle \xi-\xi_* , \s \rangle \geq 0$, which corresponds to
$\theta\in[0,\pi/2]$. This
amounts to taking $ b_n (x) =\, \mathbf{1}_{\{ 0\leq x\leq 1-1/n\}}\overline{b}(x)$ in the
collision operator $\Q$ where
$$\overline{b}(x)=b(x)+b(-x).$$
Then, for some fixed $\theta_0\in[\arccos(1-1/n),\pi/2] $, we
split $b_n$ as  $ b_n= b_{n,c} + b_{n,r}$ where
$$b_{n,c}(x)=\mathbf{1}_{\{ 0\leq x\leq \cos \t_0\}}\overline{b}(x) \quad \text{ and } \quad b_{n,r}(x)= \mathbf{1}_{\{ \cos \t_0\leq x\leq 1-1/n\}}\overline{b}(x).$$
It is important to point out that
 $b_{n,c}$ and consequently the norm $\|b_{n,c}\|_{\L}$ do not depend on $n$ but only on $\theta_0$.
This splitting leads to the corresponding decomposition of the collision
operators:
\beq \label{estim_3}
 \Q_+^n=\Q_+^{n,c}+\Q_+^{n,r}\qquad \mbox{ and } \qquad
\Q_-^n=\Q_-^{n,c}+\Q_-^{n,r}.
\eeq
We first consider $\Q_+^{n,r}$ and $\Q_-^{n,r}$. We have
\beq \label{estim_8}
\int_{\R^d} \Q_-^{n,r}(\psi_n,\psi_n)(t,\xi)\, \psi_n(t,\xi)^{p-1}\, \d\xi
\geq 0.
\eeq
Then, for the integral involving $\Q_+^{n,r}$, the change of variables
$(\xi,\xi_*)\to (\xi',\xi'_*)$ yields
\bean
& & \hspace{-8mm} \int_{\R^d} \Q_+^{n,r}(\psi_n,\psi_n)(t,\xi) \,
\psi_n(t,\xi)^{p-1}\, \d\xi \\
& & = \int_{\R^d}\int_{\R^d}\int_{\S^{d-1}}  \psi_n(t,\xi)\,
\psi_n(t,\xi_*) \, \psi_n(t,\xi')^{p-1}b_{n,r}(\cos\theta)
\Phi_n(|\xi-\xi_*|) \, \d\s \, \d\xi\, \d\xi_*
\eean
Now, we have
$$ \psi_n(t,\xi)\, \psi_n(t,\xi')^{p-1}  \leq  \frac{1}{p} \: \psi_n(t,\xi)^p\,
+ \frac{p-1}{p} \:\psi_n(t,\xi')^p,  $$
and (see \cite[Section 3, Proof of Lemma 1]{ADVW00} or \cite[Eq. (2.7)]{DeMou05})
\bean
& &  \hspace{-1cm}\int_{\R^d}\int_{\S^{d-1}} \psi_n(t,\xi')^p \,
\mathbf{1}_{\left\{\cos\t_0 \leq \cos \t \leq 1-1/n \right\}} \,\overline{b}(\cos\theta)
\Phi_n(|\xi-\xi_*|) \, \d\s \, \d\xi \\
&  &  \hspace{1cm} = |\S^{d-2}|  \int_{\R^d}\int_{\arccos(1-1/n)}^{\t_0}
\psi_n(t,\xi)^p \,   \Phi_n\left(\frac{|\xi-\xi_*|}{\cos(\t/2)}\right)
\frac{\sin^{d-2}(\t)}{\cos^d(\t/2)} \;\,\overline{b}(\cos\theta) \d\t\, \d\xi.
\eean
Then, thanks  to the inequalities
\beq\label{maj_wn}
\Phi_n(|\xi-\xi_*|) \leq  \Phi_n(|\xi|) + |\xi_*|^\g  \quad \mbox{ and }\quad
\Phi_n\left(\frac{|\xi-\xi_*|}{\l}\right) \leq
\l^{-\g} \, \Phi_n(|\xi-\xi_*|), \qquad \forall 0 < \l < 1,
\eeq
we get
\bea
& & \hspace{-8mm} \int_{\R^d} \Q_+^{n,r}(\psi_n,\psi_n)(t,\xi) \,
\psi_n(t,\xi)^{p-1}\, \d\xi \nonumber \\
& & \leq  |\S^{d-2}| \,
\int_{\arccos(1-1/n)}^{\t_0} \,\overline{b}(\cos\theta) (1+ (\cos(\t/2))^{-d-\g}) \,\sin^{d-2}(\t)\, \d\t
\nonumber   \\
& & \hspace{3cm}  \times
\left(\int_{\R^d} \psi_n(t,\xi)^p \, \Phi_n(|\xi|)\, \d\xi
+ \left( 1+ d \right) \|\psi_n(t)\|^p_{L^p}\right). \label{estim_9}
\eea
Let us now consider $\Q_+^{n,c}$ and $\Q_-^{n,c}$. We proceed as in the proof
of \cite[Proposition 2.4]{DeMou05}. Since
\beq\label{min_Phi_n}
\Phi_n(|\xi-\xi_*|)\geq \Phi_n(|\xi|) -|\xi_*|^\g,
\eeq
 we deduce that
\begin{multline}\label{estim_4}
\int_{\R^d} \Q_-^{n,c}(\psi_n,\psi_n)(t,\xi)\, \psi_n(t,\xi)^{p-1}\, \d\xi
\geq  \frac{1}{2} \|b_{n,c}\|_{\L}
\int_{\R^d} \psi_n(t,\xi)^p\, \Phi_n(|\xi|)\, \d\xi \\
- \|b_{n,c}\|_{\L} \, (1+d) \;  \|\psi_n(t)\|^p_{L^p}\,.\end{multline}
 On the other hand,
\beq \label{estim_5}
\int_{\R^d} \Q_+^{n,c}(\psi_n,\psi_n)(t,\xi) \, \psi_n(t,\xi)^{p-1}\, \d\xi
= J_1+J_2,
\eeq
where
\begin{equation*}\begin{split}
J_1 &=  \int_{\R^{2d}} \int_{\S^{d-1}} \psi_n(t,\xi')\, \psi_n(t,\xi'_*) \,
\mathbf{1}_{\{|\xi'|\leq r\}} \, \psi_n(t,\xi)^{p-1}b_{n,c}(\cos\theta)
\Phi_n(|\xi-\xi_*|) \, \d\s \, \d\xi\, \d\xi_*,\\
J_2 &=     \int_{\R^{2d}} \int_{\S^{d-1}} \psi_n(t,\xi')\, \psi_n(t,\xi'_*) \,
\mathbf{1}_{\{|\xi'|\geq r\}} \, \psi_n(t,\xi)^{p-1}b_{n,c}(\cos\theta)
\Phi_n(|\xi-\xi_*|) \, \d\s \, \d\xi\, \d\xi_*,\end{split}\end{equation*}
with $r>0$. Performing the same calculations as in the proof of
\cite[Proposition 2.4]{DeMou05} and using the same notations, we prove easily (using again \eqref{maj_wn}) that the following hold for any $\mu_1 >0$ and any $\mu_2 >0$:
\begin{multline}\label{estim_6}
J_1 \leq (\cos(\pi/4))^{-d-\g} \,
\left(1-\frac{1}{p}\right) \mu_1^{-1} \|b_{n,c}\|_{\L} \left(\int_{\R^d} \psi_n(t,\xi)^p \, \Phi_n(|\xi|)\,
  \d\xi + \left( 1+d \right) \|\psi_n(t)\|^p_{L^p}\right)\\
 +  \frac{1}{p} \, \mu_1^{p-1}\, \|b_{n,c}\|_{\L} \, \left(1+r^\g+d\right)
\,  \|\psi_n(t)\|^p_{L^p}
\end{multline}
and
\begin{multline}\label{estim_7} J_2  \leq   (\sin(\t_0/2))^{-d-\g} \,
\left(1-\frac{1}{p}\right)
\mu_2^{-1} \|b_{n,c}\|_{\L}\left(\frac{d}{r^2} \int_{\R^d}
  \psi_n(t,\xi)^p \,\Phi_n(|\xi|)\, \d\xi + \frac{d}{r^{2-\g}}\,
  \|\psi_n(t)\|^p_{L^p}\right)\\
+ \frac{\mu_2^{p-1}}{p} \,\|b_{n,c}\|_{\L}\left( \int_{\R^d} \psi_n(t,\xi)^p \, \Phi_n(|\xi|)\,
  \d\xi + (1+d)\,  \|\psi_n(t)\|^p_{L^p} \right).\end{multline}
It remains now to choose the parameters $\theta_0$, $\mu_1$, $\mu_2$ and $r$ so that all the terms involving $\int_{\R^d} \psi_n(t,\xi)^p \, \Phi_n(|\xi|)\,
  \d\xi$ that appear in the gain term can be absorbed by the one appearing in the estimate of the loss term. Precisely, we first choose $\theta_0$ small enough such that
$$|\S^{d-2}|\int_0^{\t_0} \,\overline{b}(\cos\theta) (1+ (\cos(\t/2))^{-d-\g}) \,\sin^{d-2}(\t)\, \d\t\leq
a \|b_{n,c}\|_{\L}$$
for some $a >0$ to be determined later (recall that $\|b_{n,c}\|_{\L}$ only depends on $\theta_0$). Then, we choose $\mu_1$ big enough and $\mu_2$ small enough such that
$$(p-1) (\cos(\pi/4))^{-d-\g}\mu_1^{-1}\leq ap\qquad
\mbox{ and } \qquad \mu_2^{p-1} \leq ap. $$
Finally, we choose $r$  big enough such that
$$(p-1) (\sin(\t_0/2))^{-d-\g}\mu_2^{-1} \frac{d}{r^2}
\leq ap. $$
Let $N_1\in\N_*$ be such that $N_1\geq \max
\left\{\frac{1}{1-\cos\t_0},N_0\right\}$. Gathering
(\ref{estim_1}), (\ref{estim_2}), (\ref{estim_3}), (\ref{estim_8}),
(\ref{estim_9}), (\ref{estim_4}), (\ref{estim_5}), (\ref{estim_6}) and
(\ref{estim_7}) we conclude that, for $n\geq N_1$,
$$\frac{\d}{ \d t} \|\psi_n(t)\|^p_{L^p}  \leq \frac{8(1-\a)ap-p}{2} \; \|b_{n,c}\|_{\L}\int_{\R^d} \psi_n(t,\xi)^p \, \Phi_n(|\xi|)\,
  \d\xi  + C\|\psi_n(t)\|_{L^p}^p$$
for some positive constant $C$ that only depends on $\a$, $b(\cdot)$, $p$, $d$, $\mu_1$, $r$ and $\g$. Taking then $a=\frac{1}{16(1-\a)}$  we get
$$\frac{\d}{ \d t} \|\psi_n(t)\|^p_{L^p} + \frac{ p}{4}\|b_{n,c}\|_{\L}
 \int_{\R^d} \psi_n(t,\xi)^p \, \Phi_n(|\xi|)\, \d\xi
\leq C  \|\psi_n(t)\|^p_{L^p}.$$
Recalling again that $\|b_{n,c}\|_{\L}$ does not depend on $n$, the Gronwall Lemma and the inequality $\|\psi_0^n\|_{L^p}\leq
\|\psi_0\|_{L^p}$ then imply that (\ref{estim_lp}) holds.
\end{proof}

We now deduce from these $L^p$-estimates the following lemma, which implies
that  $\mathbf{A}_{n}$ and $\mathbf{B}_n$ are uniformly
bounded in $L^1_{\mathrm{loc}}(0,\infty)$.

\begin{lem} \label{lem_2+gamma}
Let $T>0$. There exists some constant $C$ depending only on $\a$, $d$, $\g$,
$p$, $T$ and $\|\psi_0\|_{L^p}$ such that, for $n\geq N_1$,
\beq\label{mom_2+gamma}
\int_0^T \int_{\R^d} \psi_n(t,\xi)\, |\xi|^{2} \, \Phi_n(|\xi|) \, \d\xi \,
\d t \leq C.
\eeq
\end{lem}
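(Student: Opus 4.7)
Since $\Phi_n(|\xi|)\leq|\xi|^\gamma$ pointwise, controlling $\int_0^T\int_{\R^d}\psi_n|\xi|^2\Phi_n(|\xi|)\,\d\xi\,\d t$ reduces to bounding $\int_0^T M_{2+\gamma}(\psi_n)(t)\,\d t$ with $M_{2+\gamma}(\psi_n):=\int_{\R^d}\psi_n|\xi|^{2+\gamma}\,\d\xi$. A pointwise-in-$t$ bound on $M_{2+\gamma}(\psi_n)$ cannot be inferred from the uniform controls $\psi_n\in L^1\cap L^p\cap L^1_2$ alone (densities $\psi(\xi)\sim|\xi|^{-(d+2+\epsilon)}$ as $|\xi|\to\infty$ belong to $L^1\cap L^p\cap L^1_2$ for any small $\epsilon>0$ but have infinite moment of order $2+\gamma$ as soon as $\gamma>\epsilon$). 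The argument must therefore exploit the dissipation of the collision operator together with the integrated version of \eqref{estim_lp}, namely
\[\int_0^T\int_{\R^d}\psi_n(t,\xi)^p\,\Phi_n(|\xi|)\,\d\xi\,\d t\leq C,\]
which follows by time-integration of the differential inequality derived in the proof of the previous lemma.

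My plan is to multiply the truncated equation \eqref{annihi} by $|\xi|^2\Phi_n(|\xi|)$, integrate over $\R^d$, and derive a differential inequality for $Y_n(t):=\int_{\R^d}\psi_n(t,\xi)|\xi|^2\Phi_n(|\xi|)\,\d\xi$. After integration by parts on the drift term, using
\[\mathrm{div}_\xi\bigl(\xi\,|\xi|^2\Phi_n(|\xi|)\bigr)=(d+2)\,|\xi|^2\Phi_n(|\xi|)+\gamma\,|\xi|^{2+\gamma}\mathbf{1}_{|\xi|\leq n},\]
two quantities remain to estimate: (a) the drift contribution $\bigl[-\mathbf{A}_n+(d+2+\gamma)\mathbf{B}_n\bigr]Y_n(t)$, and (b) the collision contribution $\int\mathbb{B}^n(\psi_n,\psi_n)|\xi|^2\Phi_n\,\d\xi$. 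For (a), Lemma \ref{lem1} expresses $\mathbf{A}_n,\mathbf{B}_n$ in terms of $\int_{\R^d}\Q_-^n(1+|\xi|^2)\,\d\xi$, which by the symmetrization $|\xi|^2+|\xi_*|^2=\tfrac{1}{2}(|\xi+\xi_*|^2+|\xi-\xi_*|^2)$ combined with the subadditivity $\Phi_n(|\xi-\xi_*|)\leq|\xi|^\gamma+|\xi_*|^\gamma$ (cf.\ \eqref{maj_Phi_n}) reduces to a linear multiple of $Y_n$ plus uniformly bounded constants. For (b), I would apply a Povzner-type estimate via the same spherical-cap decomposition $b_n=b_{n,c}+b_{n,r}$ used in the proof of the previous lemma: the central-cap part produces a strictly negative dissipation term, while the residual ``gain'' contributions are absorbed using the integrated $L^p$-bound above.

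The combination of these estimates yields a differential inequality of the schematic form
\[\frac{\d}{\d t}Y_n(t)\leq C_1\,Y_n(t)+C_2\int_{\R^d}\psi_n(t,\xi)^p\,\Phi_n(|\xi|)\,\d\xi+C_3,\]
with constants depending only on $\alpha,d,\gamma,p,b(\cdot)$ and the uniform mass and energy bounds \eqref{mino}. Integrating on $[0,T]$ and invoking the integrated $L^p$-estimate on the middle term produces the claimed uniform bound. The main obstacle is the fine tuning of the Povzner angular decomposition, exactly as in the proof of the previous lemma where $\theta_0,\mu_1,\mu_2,r$ are carefully balanced: one must verify that the resulting dissipation strictly dominates the gain contributions so that the absorption step closes uniformly in $n\geq N_1$, and that the remainder terms coming from the drift can be bounded by constants that do not depend on higher moments of $\psi_0$.
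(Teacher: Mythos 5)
Your plan hinges on deriving a Gronwall-type differential inequality for $Y_n(t):=\int_{\R^d}\psi_n(t,\xi)|\xi|^2\Phi_n(|\xi|)\,\d\xi$, and this is where it breaks down. First, any such inequality only yields information after integrating from the initial time, so you would need $Y_n(0)=\int_{\R^d}\psi_0^n(\xi)|\xi|^2\Phi_n(|\xi|)\,\d\xi$ to be bounded uniformly in $n$. But in this subsection $\psi_0$ is only assumed to lie in $L^1_{2+\delta}\cap L^p$ with $\delta>0$ arbitrary (possibly $\delta<\gamma$), the approximations $\psi_0^n$ are only uniformly controlled in $L^1_{2+\delta}$ via \eqref{majo}, and the constant in the lemma is required to depend only on $\a,d,\g,p,T,\|\psi_0\|_{L^p}$ — not on any moment of order $2+\g$ of the data. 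Indeed the whole point of this lemma (and the reason the later Lemma \ref{lemCT} is not circular) is to produce the $2+\g$-order bound \emph{without} assuming it initially. Second, your claim that the drift contribution is ``a linear multiple of $Y_n$ plus uniformly bounded constants'' is not correct: $\mathbf{B}_n(t)$ contains $\int_{\R^d}|\xi|^2\Q^n_-(\psi_n,\psi_n)\,\d\xi$, which is itself comparable to $Y_n(t)$ (only the crude bound $\leq Cn^\g$ is uniform in time, and it is not uniform in $n$), so the drift term in your equation for $Y_n$ is \emph{quadratic} in $Y_n$. Dominating a quadratic drift term by the Povzner dissipation is exactly what forces the smallness restriction $\a<\a_0$ in Proposition \ref{theoMom}; here the lemma must hold for every $\a\in(0,1)$. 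Third, even granting your schematic inequality, integrating $\frac{\d}{\d t}Y_n\leq C_1Y_n+\dots$ over $[0,T]$ puts $\int_0^TY_n\,\d t$ on the wrong side, and the proposed absorption of the gain terms by the time-integrated quantity $\int_0^T\int\psi_n^p\Phi_n\,\d\xi\,\d t$ has no supporting mechanism: the Povzner remainders are products of moments, and an $L^p$-type dissipation does not control them.

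The paper's proof avoids all of this by never differentiating a moment of order $\geq 2$. One writes the moment equation for $Y^n_s$ with $s=2-\g<2$; because $s<2$, the drift coefficients enter with a favorable sign and produce, after time integration, the term $\frac{\a s}{2d}\int_0^T\bigl(\int_{\R^d}\Phi_n(|\xi|)|\xi|^2\psi_n\,\d\xi\bigr)Y^n_{2-\g}(\tau)\,\d\tau$ on the good side of the inequality — this is where the quantity to be bounded comes from, namely from $\mathbf{B}_n$ itself rather than from collisional dissipation. The collision terms are handled with the Povzner bound of \cite[Lemma 2.2 (ii)]{MiWe99} for $s<2$ and involve only moments of order $\leq 2$, hence constants $1+d$. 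The uniform $L^p$ bound \eqref{estim_lp} is then used for one purpose only: via H\"older on $\{|\xi|\leq R\}$ it rules out concentration near the origin and gives a lower bound $Y^n_{2-\g}(\tau)\geq c>0$ uniform in $\tau$ and $n$, so that one may divide and obtain \eqref{mom_2+gamma}. You should redirect your argument along these lines rather than trying to close an inequality at the level of the $2+\g$ moment.
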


\begin{proof}
Let $n\geq N_1$. For $s\in(0,2)$, we multiply \eqref{annihi} by $|\xi|^s$
and integrate over $\R^d$. Integrations by parts then lead to
\bea
\frac{\d Y^n_s}{\d t}(t)
& = & \frac{\a}{2} \; Y^n_s(t) \int_{\R^d} \left(\frac{2-s}{\|\psi_0^n\|_{L^1}}
+ \frac{s \,|\xi|^2}{\int_{\R^d}\psi_0^n(\xi_*)\, |\xi_*|^2\, \d\xi_*} \right)
\Q^n_-(\psi_n,\psi_n)(t,\xi)\, \d\xi  \nonumber\\
& + & \frac{1-\a}{2} \int_{\R^d}\int_{\R^d} \psi_n(t,\xi)\, \psi_n(t,\xi_*)
\, \Phi_n(|\xi-\xi_*|) \, K^n_s(\xi,\xi_*)\, \d\xi\, \d\xi_* \nonumber \\
& - & \a \int_{\R^d} \Q^n_-(\psi_n,\psi_n)(t,\xi)\, |\xi|^s\, \d\xi,
\label{eq_mom}
\eea
where we set $ \displaystyle Y^n_s(t) = \int_{\R^d} \psi_n(t,\xi)\,
|\xi|^s \, \d\xi$ and
$$ K^n_s(\xi,\xi_*) = \int_{\S^{d-1}}\mathbf{1}_{\{|\cos \t |\leq 1-1/n \}}b(\cos\t)
\left(|\xi'|^s+|\xi'_*|^s-|\xi|^s -|\xi_*|^s \right)\, \d\s. $$
By \cite[Lemma 2.2 (ii)]{MiWe99}, one can write
$K^n_s(\xi,\xi_*)=G^n_s(\xi,\xi_*)-H^n_s(\xi,\xi_*)$ with
$$ H^n_s(\xi,\xi_*) \leq 0 \qquad \mbox{ and } \qquad
|G^n_s(\xi,\xi_*)| \leq c_1 \, |\xi|^{s/2} \, |\xi_*|^{s/2},$$
for some constant $c_1$ depending only on $b(\cdot)$, $s$ and $d$. Integrating the
previous inequality between $0$ and $T$, we get
\bean
& & \hspace{-5mm} Y^n_s(0) +\frac{\a\, s}{2 \int_{\R^d}\psi_0^n(\xi_*)\,
  |\xi_*|^2\, \d\xi_*} \int_0^T
\left(\int_{\R^d} |\xi|^2\, \Q^n_-(\psi_n,\psi_n)(\tau,\xi)\,
\d\xi \right) Y^n_s(\tau)\, \d\tau  \\
& & \hspace{5mm}\leq Y^n_s(T)+ \|b_n\|_{\L} \int_0^T \int_{\R^d}
\int_{\R^d} \Phi_n(|\xi-\xi_*|) \, |\xi|^s \, \psi_n(\tau,\xi)\,
\psi_n(\tau,\xi_*) \, \d\xi\, \d\xi_*\, \d\tau  \\
& &\hspace{5mm} + \frac{c_1}{2} \int_0^T \int_{\R^d} \int_{\R^d}
\Phi_n(|\xi-\xi_*|) \, |\xi|^{s/2}\, |\xi_*|^{s/2}   \,
\psi_n(\tau,\xi)\, \psi_n(\tau,\xi_*) \, \d\xi\, \d\xi_*\, \d\tau,
\eean
since $s<2$ and $0<\a<1$. We then deduce from \eqref{mino}, (\ref{maj_Phi_n})
and (\ref{min_Phi_n}) that
\bean
& & \hspace{-7mm}  \frac{\a s }{2 d} \int_0^T \left(\int_{\R^d} \Phi_n(|\xi|)
\,|\xi|^2 \psi_n(\tau,\xi)\, \d\xi \right) Y^n_s(\tau)\, \d\tau
\leq  \frac{s}{2} \int_0^T Y^n_\g(\tau)\, Y^n_s(\tau)\, \d\tau  \\
& & \hspace{1cm} + Y^n_s(T)+ \|b_n\|_{\L} \int_0^T \left(Y^n_{s+\g}(\tau)
+  Y^n_s(\tau) \,  Y^n_\g(\tau) \right) \, \d\tau \\
& & \hspace{4cm} + c_1 \int_0^T  Y^n_{s/2+\g}(\tau) \,Y^n_{s/2}(\tau) \, \d\tau\,.
\eean
Taking $s=2-\g$ and using that for any $\nu\in(0,2)$,
$Y^n_\nu(\tau) \leq Y^n_0(\tau) +Y^n_2(\tau) \leq 1+d$
we get
$$\int_0^T \left(\int_{\R^d} \Phi_n(|\xi|) \, |\xi|^2 \psi_n(\tau,\xi)\,
\d\xi \right) Y^n_{2-\g}(\tau)\, \d\tau \leq C,$$
for some constant $C$ depending only on $b(\cdot)$, $\a$, $d$, $\g$ and $T$. Now, for
$R>0$ and $p>1$,
$$  Y^n_{2-\g}(\tau) \geq R^{2-\g}\left(\frac{1}{2}-\int_{|\xi|\leq R}
  \psi_n(\tau,\xi)\, \d\xi \right),$$
and, by the H\"older inequality,
$$ \int_{|\xi|\leq R} \psi_n(\tau,\xi)\, \d\xi \leq
\left(\frac{|\S^{d-1}|\, R^d}{d}\right)^{p/(p-1)} \, \|\psi_n(\tau)\|_{L^p} \leq
\left(\frac{|\S^{d-1}|\, R^d}{d}\right)^{p/(p-1)} \, e^{CT}\,\|\psi_0\|_{L^p}.$$
Thus, (\ref{mom_2+gamma}) follows for $R$ small enough.
\end{proof}

We are now in a position to prove that moments of $\psi_n$ remain bounded
uniformly in $n\geq N_1$.

\begin{lem}\label{lemCT}
Let $T>0$ and $s>2$. Assume that $\|\psi_0\|_{L^1_s}<\infty$.
Then, there exists some constant $C$ depending only on $b(\cdot)$, $\a$, $d$, $\g$, $p$,
$s$, $T$, $\|\psi_0\|_{L^p}$ and  $\|\psi_0\|_{L^1_s}$ such that, for $n\geq N_1$,
\beq\label{mom_s}
\sup_{t\in[0,T]} \int_{\R^d} \psi_n(t,\xi)\, |\xi|^{s} \, \d\xi \leq C \quad
\mbox{ and } \quad\int_0^T \int_{\R^d} \psi_n(t,\xi)\,  \Phi_n(|\xi|) \,
|\xi|^s\, \d\xi \, \d t \leq C.
\eeq
\end{lem}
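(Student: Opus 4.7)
The plan follows the same route as in the proof of Lemma \ref{lem_2+gamma}, but now with the test function $|\xi|^s$ for $s>2$ instead of $|\xi|^{2-\g}$. Multiplying \eqref{annihi} by $|\xi|^s$, integrating over $\R^d$, integrating by parts in the drift term (which produces the factor $-(d+s)$) and symmetrizing the collision integral, one obtains the identity
\begin{equation*}
\frac{\d Y_s^n}{\d t}(t) = \bigl[(d+s)\mathbf{B}_n(t)-\mathbf{A}_n(t)\bigr]\, Y_s^n(t)  + \frac{1-\a}{2}\iint \psi_n\psi_{n*}\Phi_n(|\xi-\xi_*|)K_s^n(\xi,\xi_*)\, \d\xi\,\d\xi_*  - \a\int_{\R^d}\Q_-^n(\psi_n,\psi_n)(t,\xi)|\xi|^s\, \d\xi,
\end{equation*}
with $Y_s^n(t):=\int_{\R^d}\psi_n(t,\xi)|\xi|^s\,\d\xi$ and $K_s^n$ the symmetrized angular kernel already introduced in the proof of Lemma \ref{lem_2+gamma}. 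Using the explicit formulas for $\mathbf{A}_n,\mathbf{B}_n$ and the lower bound \eqref{mino} (for $s>2$ the contribution of the mass piece has the favorable sign and can be dropped), together with \eqref{maj_Phi_n} and the a priori controls $\|\psi_n(t)\|_{L^1}\leq 1$, $Y_2^n(t)\leq d$, one gets
\[
(d+s)\mathbf{B}_n(t)-\mathbf{A}_n(t)\leq c_s\Bigl(1+\int_{\R^d}\psi_n(t,\xi)\Phi_n(|\xi|)\,|\xi|^2\,\d\xi\Bigr).
\]

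The crucial ingredient is a Povzner-type decomposition valid for $s>2$: by \cite[Lemma 2.2]{MiWe99} one may write $K_s^n=G_s^n-H_s^n$ with $H_s^n(\xi,\xi_*)\geq \k_s(|\xi|^s+|\xi_*|^s)$ for some explicit $\k_s>0$ independent of $n$, and $|G_s^n(\xi,\xi_*)|\leq C_s\bigl(|\xi|^{s-1}|\xi_*|+|\xi||\xi_*|^{s-1}\bigr)$. I would apply \eqref{min_Phi_n} to the $-H_s^n$ piece (which produces a negative term dominating a multiple of $-\int\psi_n\Phi_n(|\xi|)|\xi|^s\,\d\xi$, plus lower-order contributions) and \eqref{maj_Phi_n} to the $G_s^n$ piece (which yields only mixed moments of total degree strictly less than $s+\g$). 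The same lower bound \eqref{min_Phi_n} applied to $\a\int \Q_-^n|\xi|^s\,\d\xi$ provides additional dissipation. After controlling the mixed moments by Young's inequality (reducing them to $Y_s^n$ and the already-bounded moments of order $0,2$), one obtains the differential inequality
\begin{equation*}
\frac{\d Y_s^n}{\d t}(t) + \k \int_{\R^d}\psi_n(t,\xi)\Phi_n(|\xi|)\,|\xi|^s\,\d\xi \leq C\Bigl(1+\int_{\R^d}\psi_n(t,\xi)\Phi_n(|\xi|)\,|\xi|^2\,\d\xi\Bigr)\,Y_s^n(t) + C,
\end{equation*}
valid for all $n\geq N_1$ and $t\in[0,T]$, with $\k,C>0$ depending only on the permitted parameters.

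To close the estimate, I would use the elementary splitting $Y_s^n(t)\leq R^s + R^{-\g}\int_{\R^d}\psi_n(t,\xi)\Phi_n(|\xi|)|\xi|^s\,\d\xi$ (valid for any $R\in(0,n]$, using that $\Phi_n(|\xi|)=|\xi|^\g$ on $\{|\xi|\leq n\}$ and $\Phi_n(|\xi|)=n^\g\geq R^\g$ on $\{|\xi|>n\}$) in order to absorb a fraction of the dissipative term into the left-hand side by choosing $R$ large enough. Gr\"onwall's lemma, combined with the initial bound $Y_s^n(0)\leq 2^{s-1}(\|\psi_0\|_{L^1}+\|\psi_0\|_{L^1_s})$ from \eqref{majo} and the $L^1(0,T)$-bound on $\int\psi_n\Phi_n(|\xi|)|\xi|^2\,\d\xi$ supplied by Lemma \ref{lem_2+gamma}, then yields the first inequality in \eqref{mom_s}; integrating the above differential inequality over $[0,T]$ gives the second one. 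The main obstacle is that the coefficient $\int\psi_n\Phi_n(|\xi|)|\xi|^2\,\d\xi$ in front of $Y_s^n$ is only bounded in $L^1(0,T)$ and not pointwise, so that the Gr\"onwall step is \emph{only} possible because of the integrated moment control of Lemma \ref{lem_2+gamma} (itself a consequence of the uniform $L^p$-estimate \eqref{estim_lp}); this is precisely why the chain of estimates has to be arranged in this specific order.
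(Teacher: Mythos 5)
Your proposal is correct and follows essentially the same route as the paper: the moment identity \eqref{eq_mom}, dropping the favorable-sign part of the drift coefficient for $s>2$ and bounding the rest via \eqref{mino}, a Povzner decomposition with an $n$-uniform negative part combined with \eqref{maj_wn}--\eqref{min_Phi_n}, and a Gronwall argument whose $L^1(0,T)$ coefficient is supplied by Lemma \ref{lem_2+gamma} and whose initial bound comes from \eqref{majo}, with the second inequality in \eqref{mom_s} obtained by integrating the differential inequality. The only (immaterial) differences are that the paper invokes \cite[Lemma 11]{Lu00}, which gives $K^n_s\leq c_1(|\xi|^{s-\g}|\xi_*|+|\xi||\xi_*|^{s-\g})-c_2(n)|\xi|^s$ with $c_2(n)\geq c_2(2)>0$, instead of your symmetric variant of the Povzner estimate, and that your $R$-splitting absorption step is not needed once the coefficient in front of $Y^n_s$ is known to be uniformly bounded in $L^1(0,T)$.
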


\begin{proof}
Let $s>2$ and $n\geq N_1$. Our proof follows the same lines as the proof
of \cite[Lemma 4.2]{MiWe99}. We use here the same notations as in the proof of
Lemma \ref{lem_2+gamma}. As previously, (\ref{eq_mom}) holds. Now, by
\cite[Lemma 11]{Lu00},  we have
$$K^n_s(\xi,\xi_*) \leq c_1 \, (|\xi|^{s-\g}\, |\xi_*| + |\xi|\, |\xi_*|^{s-\g})
-c_2(n) |\xi|^s,$$
for some constant $c_1$ depending only on $s$ and $d$ and
 $$c_2(n)= 2^{-s}\; \frac{s-2}{2}\; |\S^{d-2}| \int_0^\pi
 \mathbf{1}_{\{|\cos\theta|\leq 1-1/n\}}
\left(\min\{\cos\t ,1-\cos\t\}\right)^s \,b(\cos\t) \d\t. $$
Thus, by  (\ref{mino}), (\ref{maj_Phi_n}), (\ref{maj_wn}), (\ref{min_Phi_n})
and the above estimate, (\ref{eq_mom}) yields
\bean
\frac{\d}{\d t} Y^n_s(t) & \leq  & \frac{2 s}{d} \;\|b_n\|_{\L}\,Y^n_s(t)
\left(\int_{\R^d} |\xi|^2 \, \Phi_n(|\xi|) \, \psi_n(t,\xi)\, \d\xi \right)
+ \frac{s}{2} \;\|b_n\|_{\L}\,  Y^n_s(t)\, Y^n_\g(t) \\
& + & c_1 \int_{\R^d}\int_{\R^d} \psi_n(t,\xi)\, \psi_n(t,\xi_*)
\, ( |\xi|^\g+ |\xi_*|^\g) \, |\xi|^{s-\g}\, |\xi_*|  \, \d\xi\, \d\xi_* \\
& - & \frac{(1-\a)\, c_2(n)}{2} \int_{\R^d}\int_{\R^d} \psi_n(t,\xi)\,
\psi_n(t,\xi_*)  \, ( \Phi_n(|\xi|)- |\xi_*|^\g) \, |\xi|^s\, \d\xi\, \d\xi_*\,.
\eean
Consequently,
\bean
& & \hspace{-1cm} \frac{\d}{\d t} Y^n_s(t) + \frac{(1-\a)\, c_2(n)}{2}
\int_{\R^d} \psi_n(t,\xi)\,  \Phi_n(|\xi|) \, |\xi|^s\, \d\xi \\
&  & \leq   \frac{2 s}{d} \;\|b_n\|_{\L}\, Y^n_s(t)  \left(\int_{\R^d} |\xi|^2 \,
\Phi_n(|\xi|) \, \psi_n(t,\xi)\, \d\xi \right)
+  \frac{s\,\|b_n\|_{\L}+ c_2(n)}{2} \; Y^n_s(t)\,   Y^n_\g(t) \\
& &  \hspace{ 1cm} + c_1 \left( Y^n_s(t)\, Y^n_1(t) + Y^n_{s-\g}(t)\, Y^n_{1+\g}(t) \right),
\eean
but, for each $n\geq 2$,
$$0 < c_2(2)\leq c_2(n) \leq c_2^\infty:= 2^{-s}\; \frac{s-2}{2}\; |\S^{d-2}|
\int_0^\pi \left(\min\{\cos\t ,1-\cos\t\}\right)^s \,b(\cos\t) \d\t. $$
Hence, since $Y_{s-\gamma}^n(t) \leq Y_s^n(t)+1$, setting
$$h_n(t)= \frac{2 s}{d} \;\|b_n\|_{\L} \int_{\R^d} |\xi|^2 \, \Phi_n(|\xi|) \,
\psi_n(t,\xi)\, \d\xi + \frac{(s\, \|b_n\|_{\L}+c^\infty_2+4c_1)(1+d)}{2}$$
we obtain
$$ \frac{\d}{\d t} Y^n_s(t) + \frac{(1-\a)\, c_2(2)}{2} \int_{\R^d}
\psi_n(t,\xi)\,  \Phi_n(|\xi|) \, |\xi|^s\, \d\xi
\leq  h_n(t) \,  Y^n_s(t) +c_1\, (d+1).$$
Then, (\ref{mom_s}) follows easily from the Gronwall Lemma, \eqref{majo} and
Lemma \ref{lem_2+gamma}.
\end{proof}

 \begin{rmq} Applying the above to $s=2+\gamma$ and using \eqref{maj_Phi_n} one gets that
\begin{equation}\label{estQ-n}
\sup_{t \in [0,T]}\int_{\R^{d}}\Q_{-}^{n}\left(\psi_{n},\psi_{n}\right)(t,\xi)\,|\xi|^{2}\d \xi \leq C\end{equation}
for any $n \geq N_{1}$ and any $T > 0$ where  $C > 0$ is as in Lemma \ref{lemCT}. 
\end{rmq}

From the above Lemmas \ref{lemCT} and \ref{lem:lp} and the Dunford-Pettis Theorem, the set $\left\{\psi_n(t)\,,\,n \geq N_1\,\right\}$ is weakly relatively compact in $L^1(\R^d)$ for any $t \in [0,T]$. One can be more precise:

\begin{prop}\label{conv4} For any $T>0$, the sequence $(\psi_n)_{n\geq N_1}$ is relatively  compact in $\C([0,T];w-L^1(\R^d))$.
\end{prop}

\begin{proof} We follow here closely an approach already used in \cite{LLW03,Ba05}.
Let $T>0$. Due to \cite[Theorem 1.3.2]{vra}, since we already noticed that $\left\{\psi_n(t)\,,\,n \geq N_1\,\right\}$ is weakly relatively compact in $L^1(\R^d)$ for any $t \in [0,T]$, it suffices to check that
\begin{equation}\label{point1}
\mbox{ the family } (\psi_n)_{n \geq N_{1}}: [0,T] \tend L^1(\R^d)
\mbox{ is weakly equicontinuous}.\end{equation}
Let $\lambda \in L^\infty(\R^d)$.
There exists a sequence of functions $(\lambda_k)$ in $\C^1_c(\R^d)$ such that
\begin{equation}\label{pp}
\lambda_k(\cdot) \underset{k \to \infty}{\longrightarrow } \lambda(\cdot)  \quad  \mbox{ a.e. in } \R^d \quad \text{ and } \quad 
\sup_{k \geq 1}\| \lambda_k \|_{L^\infty}  \leq   \| \lambda \|_{L^\infty}
\end{equation}
We fix $\eta\in(0,1)$. From \eqref{estim_lp}, we deduce the existence of some
real $\omega(\eta)>0$ such that, for any measurable subset $E$ of $\R^d$,
\beq
\textrm{meas}(E) \leq \omega(\eta) \implies \sup_{n\geq N_1} \sup_{t\in [0,T ]} \int_E \psi_n(t,\xi) \, \d\xi \leq \eta.
\eeq
Moreover, Egorov theorem and (\ref{pp}) imply the
existence of a measurable subset $E_\eta$ of $B(0,1/\eta)$ such that
$$\mbox{meas }(E_\eta) \leq \omega(\eta) \qquad \mbox{ and } \qquad
\lim_{k\tend +\infty} \sup_{\xi \in B(0,1/\eta)\backslash E_\eta} |\lambda_k(\xi)-\lambda(\xi)| =0.$$
Consequently, for all $t\in (0,T)$, $h\in (-t,T-t)$ and $R\in(0,1/\eta]$,
we have
\bean
\left| \int_{\R^d} [\psi_n(t+h,\xi)-\psi_n(t,\xi)] \, \lambda(\xi) \, \d\xi \right|
& \leq & \left| \int_{\R^d} [\psi_n(t+h,\xi)-\psi_n(t,\xi)] \, \lambda_k(\xi) \, \d\xi\right| \\
& + & \left| \int_{|\xi|\leq R} [\psi_n(t+h,\xi)-\psi_n(t,\xi)] \, [\lambda(\xi)-\lambda_k(\xi)]\, \d\xi \right|\\
& + & \int_{|\xi|> R} [\psi_n(t+h,\xi)+\psi_n(t,\xi)] \, [|\lambda(\xi)|+|\lambda_k(\xi)|] \, \d\xi .
\eean
Thus, by the definition of $\omega(\eta)$, $E_\eta$ and $\lambda_k$, we deduce from \eqref{mom_s} that
\begin{multline}\label{equicont}
\left| \int_{\R^d} [\psi_n(t+h,\xi)-\psi_n(t,\xi)] \, \lambda(\xi) \, \d\xi \right|
\leq  \left| \int_{\R^d} [\psi_n(t+h,\xi)-\psi_n(t,\xi)] \, \lambda_k(\xi) \, \d\xi\right| \\
 + 2\, C\, \sup_{\xi \in B(0,R)\backslash E_\eta} |\lambda_k(\xi)-\lambda(\xi)| + 4 \,\|\lambda\|_{L^\infty}\, \eta + \frac{ 4 \, \|\lambda\|_{L^\infty}\,C }{R^2}.
\end{multline}
Let us now consider the first integral in the right-hand side of \eqref{equicont}. We infer from \eqref{annihi} that
\begin{multline*}
\frac{\d}{\d t} \int_{\R^d} \psi_n(t,\xi)\, \lambda_k(\xi)\, \d\xi
= (d \mathbf{B}_n(t)-\mathbf{A}_n(t) ) \, \int_{\R^d} \psi_n(t,\xi)\, \lambda_k(\xi)\, \d\xi \\
 + \mathbf{B}_n(t) \int_{\R^d} \psi_n(t,\xi)\,  \xi \cdot \nabla \lambda_k(\xi)\, \d\xi
+ \int_{\R^d} \mathbb{B}^n(\psi_n,\psi_n)(t,\xi)\,\lambda_k(\xi)\, \d\xi\,.
\end{multline*}
Now, by \eqref{mino}, \eqref{mom_s} and  inequalities \eqref{estim_2}, \eqref{maj_wn} and \eqref{estQ-n}, we have
$$0\leq d \mathbf{B}_n(t)-\mathbf{A}_n(t)
=\frac{\alpha}{ \|\psi_0^n\|_{L^1}} \int_{\R^d}\Q^n_-(\psi_n,\psi_n)(t,\xi)\,\d\xi
\leq 4\, \|b\|_{\L}\, (1+d), $$
\bean
|\mathbf{B}_n(t)|  & \leq&  \frac{\alpha}{2} \int_{\R^d} \left(\frac{1}{ \|\psi_0^n\|_{L^1}}+\frac{|\xi|^2}{\int_{\R^d}\psi_0^n(\xi_*)\,|\xi_*|^2\, \d\xi_* }\right) \, \Q^n_-(\psi_n,\psi_n)(t,\xi)\, \d\xi \\
& \leq & \frac{5}{2} \, \|b\|_{\L}\, (1+d)+ \frac{2 \,C}{d}\; \|b\|_{\L}
\eean
and
\bean
\left| \int_{\R^d} \mathbb{B}^n(\psi_n,\psi_n)(t,\xi)\, \lambda_k(\xi)\, \d\xi\right| & \leq & \|\lambda_k\|_{L^\infty}  \int_{\R^d}  \left(\Q^n_+(\psi_n,\psi_n)(t,\xi) +  \Q^n_-(\psi_n,\psi_n)(t,\xi) \right)  \d\xi \\
& \leq &  4\,\|\lambda_k\|_{L^\infty} \, \|b\|_{\L}\, (1+d).
\eean
Consequently,
\begin{multline*}
 \left| \int_{\R^d} [\psi_n(t+h,\xi)-\psi_n(t,\xi)] \, \lambda_k(\xi) \, d\xi\right| \\
\leq    |h|\, \|\lambda_k\|_{W^{1,\infty}} \,  \|b\|_{\L} \, (1+d)
\left(8 + \frac{5}{2} \; (1+d) +  \frac{2\, C}{d}  \right).
\end{multline*}
With the above estimate, we let $h\tend 0$ in \eqref{equicont} and obtain that
\begin{multline*}
\limsup_{h \tend 0} \sup_{n\geq N_1} \sup_{t \in(0,T)} \left| \int_{\R^d}
[\psi_n(t+h,\xi)-\psi_n(t,\xi)] \, \lambda(\xi) \, d\xi \right| \\
\leq 2 \, C \, \sup_{\xi \in B(0,R)\backslash E_\eta} |\lambda_k(\xi)-\lambda(\xi)| + 4 \, \|\lambda\|_{L^\infty} \, \eta
+ \frac{ 4 \, \|\lambda\|_{L^\infty}\,C }{R^2}.
\end{multline*}
We now pass to the successive limits $k \tend +\infty$, $\eta \tend 0$ and
$R\tend +\infty$ and deduce that (\ref{point1}) holds. Therefore, the
proof of Proposition \ref{conv4} is complete.
\end{proof}

\subsection{Well-posedness for the rescaled equation}

We are now in position to prove that the rescaled equation \eqref{BEscaled} is well-posed. Indeed, according to Proposition \ref{conv4}, up to a subsequence, the sequence  $(\psi_n)_{n\in \N}$ converges in $\mathcal{C}([0,T];w-L^1(\R^d))$ towards some limit $\psi=\psi(t,\xi) \in \mathcal{C}([0,T];w-L^1(\R^d))$.
One notices that, according to Lemma \ref{lemCT} and Fatou's Lemma,
$$\sup_{t\in[0,T]} \int_{\R^d} \psi(t,\xi)\, |\xi|^{2+\gamma} \, \d\xi \leq C,\quad \mbox{ and } \quad \int_0^T \d t \int_{\R^d} \psi(t,\xi)\,|\xi|^{2+2\gamma}\d\xi \leq C,$$
i.e.
$$\psi \in L^\infty(0,T;L^1_{2+\gamma}(\R^d)) \cap L^1(0,T;L^1_{2+2\gamma}(\R^d)).$$

The above estimates, together with Lemma \ref{lemCT}, the convergences of $(\psi_0^n)_{n\in \N}$ and $(\psi_n)_{n\in \N}$ enable us to pass to the limit in \eqref{weakformun} as in \cite[p. 860-861]{veronique}. We finally get that $\psi$ is indeed  a solution to the annihilation equation \eqref{BEscaled} in the sense of Definition \ref{defi:sol}. Notice moreover that, for any $T > 0$, the following holds
\bean
& & \lim_{n \to \infty} \int_0^T \left|\mathbf{B}_n(t)-\mathbf{B}_\psi(t)\right|\d t=0\,,\\
& &\lim_{n \to \infty} \sup_{t \in [0,T]}\left\|\Q_{\pm}^n(\psi_n,\psi_n)(t)-\Q_{\pm}(\psi,\psi)(t)\right\|_{L^1_s} =0 \qquad \forall 0 \leq s \leq 2.
\eean

Let us now tackle the problem of uniqueness. We take $p=2$ and assume that \eqref{hypini} holds. Then, weighted Sobolev norms propagate on \emph{finite time intervals}. More precisely, one has 
\begin{lem}\label{lem:sob}Let $\kappa>0$.
Let $\psi_0\in L^1_{2+\gamma}(\R^d)$ be a nonnegative function satisfying 
\eqref{hypini}.
If  $\psi \in L^\infty(0,T;L^1_{2+\gamma}(\R^d))\cap L^1(0,T;L^1_{2+2\gamma}(\R^d))$ denotes a solution to \eqref{BEscaled} with initial condition $\psi_0$ then
\beq\label{L2norm}
\sup_{t \in [0,T]} \|\psi(t)\|_{L^2_{\frac{9+d}{2}+\kappa}}< \infty
\eeq
while
\beq\label{H1norm}
\sup_{t\in [0,T]} \|\psi(t)\|_{\mathbb{H}^1_{3+\frac{d+\gamma+\kappa}{2}}} < \infty
\qquad \mbox{ and } \qquad \int_0^T  \|\psi(t)\|_{\mathbb{H}^1_{3+\gamma+\frac{d+\kappa}{2}}}\, \d t < \infty 
\eeq
\end{lem}

\begin{proof}

 For given $k > 0$, we multiply \eqref{BEscaled} by $2 \psi(t,\xi)\langle \xi \rangle^{2k}$ and integrate over $\R^d$. Then, one obtains, after an integration by parts, 
\begin{multline}\label{outil4}
 \frac{\d}{\d t} \|\psi(t)\|^2_{L^2_k}+ \left(2\mathbf{A}_{\psi}(t) -(d+2k)\mathbf{B}_{\psi}(t)\right)\|\psi(t)\|^2_{L^2_k} + 2k\mathbf{B}_{\psi}(t) \|\psi(t)\|_{L^2_{k-1}}^2\\
= 2(1-\alpha)\int_{\R^d}\Q_+(\psi,\psi)(t,\xi)\psi(t,\xi)\langle \xi\rangle^{2k}\d\xi -2\int_{\R^d} \Q_-(\psi,\psi)(t,\xi)\psi(t,\xi)\langle \xi\rangle^{2k}\d \xi.
\end{multline}
 First, since 
\beq\label{aide}
 |\xi-\xi_*|^\gamma\geq \langle\xi\rangle^\gamma-2\langle\xi_*\rangle^\gamma,
\eeq
we deduce that 
$$\int_{\R^d} \Q_-(\psi,\psi)(t,\xi)\psi(t,\xi)\langle \xi\rangle^{2k}\d \xi\geq \|\psi(t)\|^2_{L^2_{k+\gamma/2}} - 2\|\psi(t)\|_{L^1_\gamma}\|\psi(t)\|^2_{L^2_k}.$$
On the other hand, we have 
\beq\label{tictac}
\sup_{t\in[0,T]} \left|\mathbf{A}_{\psi}(t)\right|\leq C_T  \qquad \mbox{ and }\qquad  \sup_{t\in[0,T]} \left|\mathbf{B}_{\psi}(t) \right|   \leq C_T
\eeq
for some constant $C_T>0$.
Finally, proceeding as in the proof of Lemma \ref{lem:lp} (see also \cite{DeMou05}), we deduce that for any $\varepsilon>0$, there exists $C_\varepsilon >0$ that depends on $\sup_{t\in[0,T]}\|\psi(t)\|_{L^1_{2k+\gamma}}$ such that 
$$\int_{\R^d}\Q_+(\psi,\psi)(t,\xi)\psi(t,\xi)\langle \xi\rangle^{2k}\d\xi 
\leq \varepsilon \|\psi(t)\|^2_{L^2_{k+\gamma/2}}+C_\varepsilon \|\psi(t)\|^2_{L^2_k}.$$
Gathering the above estimates with $\varepsilon=1$, we get that there exists some constant $C>0$ depending on $\|\psi_0\|_{L^1_{2k+\gamma}}$ such that  
$$ \frac{\d}{\d t} \|\psi(t)\|^2_{L^2_k} + 2\alpha \|\psi(t)\|^2_{L^2_{k+\gamma/2}} \leq C \|\psi(t)\|^2_{L^2_k},$$
whence \eqref{L2norm} for $k=\frac{9+d}{2}+\kappa$.\\

Let us now consider the $\mathbb{H}^1_q$-norm. Let $j\in\{1,\ldots,d\}$. We set 
$G_j(t,\xi)=\partial_j\psi(t,\xi)$. Then, $G_j$ satisfies 
\beq\label{eqGj_1}
\partial_t G_j(t,\xi) + \left(\mathbf{A}_\psi(t) + \mathbf{B}_\psi(t)\right) G_j(t,\xi) + \mathbf{B}_\psi(t) \,\xi \cdot \nabla_\xi G_j(t,\xi)=\partial_j \mathbb{B}(\psi,\psi)(t,\xi).
\eeq
For given $q \geq 0$, we multiply this equation by $2\,G_j(t,\xi)\langle \xi \rangle^{2q}$ and integrate over $\R^d$. Then, one obtains, after an integration by parts and using \eqref{aide},
 \begin{multline}\label{eq_GJ}
\frac{\d}{\d t} \|G_j(t)\|^2_{L^2_q}+ \left(2 \mathbf{A}_{\psi}(t)+ (2-d-2q) \mathbf{B}_{\psi}(t)\right) \|G_j(t)\|^2_{L^2_q} + 2q \mathbf{B}_{\psi}(t)  \|G_j(t)\|^2_{L^2_{q-1}}  \\
\leq 2 (1-\alpha)\int_{\R^d}\partial_j\Q_+(\psi,\psi)(t,\xi)G_j(t,\xi)\langle \xi\rangle^{2q}\d\xi - 2\|G_j(t)\|^2_{L^2_{q+\gamma/2}}  \\ + 4 \|\psi(t)\|_{L^1_{\gamma}}\, \|G_j(t)\|^2_{L^2_q} 
-2\int_{\R^d} \Q_-(\psi,G_j)(t,\xi)G_j(t,\xi)\langle \xi\rangle^{2q}\d \xi.
 \end{multline}
Let us assume now that $q \geq \gamma/2$. One has
\begin{equation*}
\int_{\R^d} |\partial_j\Q_+(\psi,\psi)(t,\xi)|\,|G_j(t,\xi)|\, \langle \xi\rangle^{2q}\d\xi \leq \|\partial_j \Q_+(\psi,\psi)(t)\|_{L^2_{q-\gamma/2}}\|G_j(t)\|_{L^2_{q+\gamma/2}}.
\end{equation*}
One can extend  \cite[Theorem 2.7]{BaLo} (see also \cite[Theorem 2.5]{AloLo3}) to any $\gamma \in (0,1]$  and use the  $L^1_{\max\{2q+3/2-\gamma+\kappa,q+\gamma/2\}}$ and $L^2_{q+3/2-\gamma/2+\kappa}$ bounds to get, for any $\varepsilon  > 0$, the existence of some positive constants $C_1(\varepsilon,q) > 0$ and $C_2(q) > 0$ such that 
$$\|\partial_j \Q_+(\psi,\psi)(t)\|_{L^2_{q-\gamma/2}} \leq \|\Q_+(\psi,\psi)(t)\|_{\mathbb{H}^1_{q-\frac{\gamma}{2}}} \leq C_1(\varepsilon,q) +  \varepsilon\,C_2(q)\, \sum_{i=1}^d \|G_i\|_{L^{2}_{q+\frac{\gamma}{2}}}.$$
Moreover, using Cauchy-Schwarz inequality we obtain, 
\bean
\left|\int_{\R^d}  \Q_{-}(\psi,G_j)(t,\xi)\,G_j(t,\xi)\langle \xi\rangle ^{2q}\d \xi\right|
& \leq & \|\psi(t)\|_{L^2_{q+\gamma}} \,\|G_j(t)\|_{L^2_q}\,\|G_j(t)\|_{L^1_\gamma} \nonumber\\
& \leq & C_{q,\kappa}\|G_j(t)\|_{L^2_q} \|G_j(t)\|_{L^2_{\gamma+\frac{d+\kappa}{2}}}
\eean
for some positive constant $C_{q,\kappa}>0$. We have 
$\gamma+\frac{d+\kappa}{2} \leq 3 + \gamma + \frac{d+\kappa}{2}$.
 Thus, for $q=3+\frac{d+\gamma+\kappa}{2}$,
summing \eqref{eq_GJ} over all $j\in\{1,\ldots, d\}$, we get, thanks to \eqref{tictac} and the above estimates,  
\begin{multline*}
\frac{\d}{\d t} \sum_{j=1}^d\|G_j(t)\|^2_{L^2_{3+\frac{d+\gamma+\kappa}{2}}}
+  2  \sum_{j=1}^d\|G_j(t)\|^2_{L^2_{3+\gamma+\frac{d+\kappa}{2}}} \\
\leq \tilde{C}_1(q) \sum_{j=1}^d\|G_j(t)\|^2_{L^2_{3+\frac{d+\gamma+\kappa}{2}}}
+  \tilde{C}_2(\varepsilon,q) + \varepsilon \tilde{C}_3 \sum_{j=1}^d\|G_j(t)\|^2_{L^2_{3+\gamma+\frac{d+\kappa}{2}}}
  \end{multline*}
whence \eqref{H1norm}. 
\end{proof}

One then has the following stability result:
\begin{prop}\label{propostab} Let $T >0$ and let $\psi, \varphi \in \mathcal{C}([0,T];w-L^1) \cap L^\infty(0,T;L^1_{2+\gamma}) \cap L^1(0,T;L^1_{2+2\gamma})$ be two solutions to \eqref{BEscaled} with initial data $\psi_0,\varphi_0$ satisfying \eqref{massenergie} and \eqref{hypini}. Then, there exists $C_T >0$ such that
$$\|\psi(t)-\varphi(t)\|_{L^1_{2+\gamma}} \leq \|\psi_0-\varphi_0  \|_{L^1_{2+\gamma}}\exp(C_T) \qquad \forall t \in [0,T].$$
\end{prop}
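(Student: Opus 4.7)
The plan is to mimic the stability argument of Proposition \ref{prop:stability}, with two simplifications: (a) there is no truncation, so the differences $\|b_m-b_n\|_1$ and $\frac{1}{n^\delta}$ disappear; and (b) both solutions share the same normalized mass $1$ and energy $d/2$ (from \eqref{massenergie}), so the coefficient-denominator differences also vanish. Set $u=\psi-\varphi$ and $h(t,\xi)=\mathrm{sign}(u(t,\xi))\,\langle \xi \rangle^2$. The difference $u$ weakly solves
\begin{multline*}
\partial_t u + \mathbf{A}_\psi(t)\, u + \mathbf{B}_\psi(t)\, \xi\cdot\nabla u \\
= \bigl(\mathbf{A}_\varphi(t)-\mathbf{A}_\psi(t)\bigr)\varphi + \bigl(\mathbf{B}_\varphi(t)-\mathbf{B}_\psi(t)\bigr)\xi\cdot\nabla\varphi + \mathbb{B}(\psi,\psi)-\mathbb{B}(\varphi,\varphi).
\end{multline*}

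Testing against $h$ and integrating by parts in the transport term exactly as in the proof of Proposition \ref{prop:stability}, and using the identities $\mathbf{A}_\psi-d\mathbf{B}_\psi=-\alpha\int\Q_-(\psi,\psi)\d\xi$ and $\mathbf{A}_\psi-(d+2)\mathbf{B}_\psi=-\tfrac{2\alpha}{d}\int|\xi|^2\Q_-(\psi,\psi)\d\xi$, one obtains an inequality of the form
$$\frac{\d}{\d t}\|u(t)\|_{L^1_2} \leq \mathcal{J}_1(t)+\mathcal{J}_2(t)+\mathcal{J}_3(t),$$
where $\mathcal{J}_1,\mathcal{J}_2,\mathcal{J}_3$ collect respectively the contributions from the coefficient differences $\mathbf{A}_\psi-\mathbf{A}_\varphi$ and $\mathbf{B}_\psi-\mathbf{B}_\varphi$, from $\Q_-(\psi,\psi)-\Q_-(\varphi,\varphi)$, and from $\Q_+(\psi,\psi)-\Q_+(\varphi,\varphi)$. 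Using the bilinearity $\Q_\pm(\psi,\psi)-\Q_\pm(\varphi,\varphi)=\Q_\pm(u,\psi)+\Q_\pm(\varphi,u)$ together with the elementary bound $L(g)(\xi)\leq \|g\|_{L^1_\gamma}\langle\xi\rangle^\gamma$, each term $\mathcal{J}_i(t)$ can be estimated by a linear expression in $\|u(t)\|_{L^1_2}$ with coefficient controlled by weighted norms of $\psi(t)$ and $\varphi(t)$. In particular, the key control
$$|\mathbf{A}_\psi(t)-\mathbf{A}_\varphi(t)|+|\mathbf{B}_\psi(t)-\mathbf{B}_\varphi(t)| \leq C\,\bigl(\|\psi(t)\|_{L^1_{2+\gamma}}+\|\varphi(t)\|_{L^1_{2+\gamma}}\bigr)\,\|u(t)\|_{L^1_{2+\gamma}}$$
follows from the analogue of Lemma \ref{ABC} applied to the untruncated operators.

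The standard Mischler--Wennberg trick handles the high-moment contributions: the (nonpositive) loss-term contribution $-\alpha\int |u|\,L(\psi)\langle\xi\rangle^2\,\d\xi$ absorbs the highest-weight pieces coming from $\Q_+(u,\psi)$, $\Q_+(\varphi,u)$ and from the $\mathbf{A}_\psi-\mathbf{A}_\varphi$, $\mathbf{B}_\psi-\mathbf{B}_\varphi$ terms, leaving a remainder of the form
$$\frac{\d}{\d t}\|u(t)\|_{L^1_2}\leq \kappa(t)\,\|u(t)\|_{L^1_2},\qquad t\in[0,T],$$
with $\kappa(t)$ controlled by $1+\|\psi(t)\|_{L^1_{2+\gamma+\delta}}+\|\varphi(t)\|_{L^1_{2+\gamma+\delta}}$ up to multiplicative constants depending only on $\alpha,d,\gamma,b(\cdot)$ and the uniform $L^\infty(0,T;L^1_{2+\delta})$ bounds on $\psi,\varphi$. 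Since by assumption $\psi,\varphi\in L^1(0,T;L^1_{2+\gamma+\delta})$, the function $\kappa$ lies in $L^1(0,T)$, and Gronwall's lemma concludes with $C_T=\int_0^T\kappa(s)\,\d s$.

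The main obstacle is the careful splitting required to produce the absorption: unlike the classical Boltzmann equation, here the coefficients $\mathbf{A}_\psi,\mathbf{B}_\psi$ involve moments of order $2+\gamma$, so the $\mathcal{J}_1$ contribution contains terms of weight $2+\gamma$ in $u$ that a priori are not controlled by $\|u\|_{L^1_2}$. These must be redistributed against the loss term using Young-type inequalities in the spirit of the estimates of $\mathcal{I}_{m,n}^{2,1}$ and $\mathcal{I}_{m,n}^{2,2}$ in the proof of Proposition \ref{prop:stability}, exploiting the integrability of $\|\psi\|_{L^1_{2+\gamma+\delta}}$ to close the estimate.
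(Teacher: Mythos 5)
Your overall skeleton (testing $u=\psi-\varphi$ against $\mathrm{sign}(u)\langle\xi\rangle^2$, using the common normalization \eqref{massenergie} to remove the denominator differences in $\mathbf{A},\mathbf{B}$, and closing with Gronwall via an $L^1(0,T)$ coefficient) is the paper's. The genuine gap is in your treatment of the collision terms. You split gain and loss and propose to absorb the weight-$(2+\gamma)$ pieces of $\Q_+(u,\psi)+\Q_+(\varphi,u)$, together with those coming from $\mathbf{A}_\psi-\mathbf{A}_\varphi$, $\mathbf{B}_\psi-\mathbf{B}_\varphi$, into the negative loss contribution. This cannot close quantitatively: bounding the gain crudely ($\mathrm{sign}\leq 1$, $\langle\xi'\rangle^2\leq\langle\xi\rangle^2+\langle\xi_*\rangle^2$) produces a top-order positive term of size roughly $2(1-\alpha)\int|u|\,|\xi|^\gamma\langle\xi\rangle^2\d\xi$ (both solutions have unit mass), plus $\alpha\int|u|L(\psi)\langle\xi\rangle^2\d\xi$ from the coefficient differences, whereas the only negative term available is $-\int|u|L(\psi)\langle\xi\rangle^2\d\xi$, whose top-order size is at most $-\int|u|\,|\xi|^\gamma\langle\xi\rangle^2\d\xi$ (and you even write it with coefficient $\alpha$ only). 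Since $2-\alpha>1$, no Young-type redistribution can make the negative term dominate; a coercive lower bound $L(\psi)\geq\mu\langle\xi\rangle^\gamma$ with a large constant is not available here either, because Lemma \ref{lem:ka} requires isotropy and small $\alpha$, neither of which is assumed in this proposition. Interpolating the excess $\|u\|_{L^1_{2+\gamma}}$ between $\|u\|_{L^1_2}$ and $\|\psi\|_{L^1_{2+\gamma+\delta}}+\|\varphi\|_{L^1_{2+\gamma+\delta}}$, as you suggest, does not repair this: it leaves either an additive error that does not vanish with $\|u\|_{L^1_2}$ or a sublinear term $\|u\|_{L^1_2}^{\theta}$, and in both cases Gronwall fails to give the stated estimate (indeed the sublinear version would not even give uniqueness).

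The paper's proof avoids any absorption. It keeps the elastic part $(1-\alpha)\Q=(1-\alpha)(\Q_+-\Q_-)$ \emph{together} and exploits the exact identity $|\xi'|^2+|\xi'_*|^2=|\xi|^2+|\xi_*|^2$ (the Mischler--Wennberg device): after symmetrizing, the weight-$(2+\gamma)$-in-$u$ contributions cancel identically and this part is bounded by $c_\gamma\max\left(\|\psi(t)\|_{L^1_{2+\gamma}},\|\varphi(t)\|_{L^1_{2+\gamma}}\right)\|u(t)\|_{L^1_2}$. The remaining annihilation loss $-\alpha\Q_-$ then yields the exact negative term $-\alpha\int|u|L(\psi)\langle\xi\rangle^2\d\xi$, and, because both solutions keep mass $1$ and energy $d/2$, the coefficient-difference contribution is bounded by precisely $+\alpha\int|u|L(\psi)\langle\xi\rangle^2\d\xi$ plus harmless terms, so the two cancel exactly. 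With this restructuring the rest of your argument (the $L^1(0,T)$ bound on $\mathbf{A}_\psi,\mathbf{B}_\psi,\mathbf{A}_\varphi,\mathbf{B}_\varphi$ and Gronwall's lemma) goes through as written.
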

\begin{proof} Since $\varphi,\psi \in   L^\infty(0,T;L^1_{2+\gamma})$, one has
$$\max_{t\in[0,T]}(|\mathbf{A}_\psi(t)|,|\mathbf{A}_\varphi(t)|,|\mathbf{B}_\psi(t)|,|\mathbf{B}_\varphi(t)|)\, \leq C_T < \infty.$$
Then, setting $F(t,\xi)=\psi(t,\xi)-\varphi(t,\xi)$,  multiplying by  $H(t,\xi)=\mathrm{sign}(F(t,\xi))(1+|\xi|^{2+\gamma})$ the equation satisfied by $F$ and integrating over $\R^d$, we get, for $t\in[0,T]$
$$\dfrac{\d}{\d t}\int_{\R^d}|F(t,\xi)| (1+|\xi|^{2+\gamma})\, \d\xi 
\leq K_T \int_{\R^d}|F(t,\xi)| (1+|\xi|^{2+\gamma})\, \d\xi + \mathcal{I}_{\psi,\varphi}^1(t)+\mathcal{I}_{\psi,\varphi}^2(t)
+\mathcal{I}_{\psi,\varphi}^3(t)$$
where $K_T >0$,
\begin{multline*}
\mathcal{I}_{\psi,\varphi}^1(t) = \left(\mathbf{A}_\varphi(t)-\mathbf{A}_\psi(t)\right)\int_{\R^d}\varphi(t,\xi)H(t,\xi)\d\xi\\
+  \left(\mathbf{B}_\varphi(t)-\mathbf{B}_\psi(t)\right)\int_{\R^d}\left(\xi \cdot \nabla \varphi(t,\xi)\right)H(t,\xi)\d\xi,
\end{multline*}
while
$$\mathcal{I}_{\psi,\varphi}^2(t)=- \alpha\int_{\R^d}\left(\Q_-(\psi,\psi)-\Q_-(\varphi,\varphi)\right)H(t,\xi)\d\xi$$
   and
   $$\mathcal{I}_{\psi,\varphi}^3(t)=(1-\alpha)\int_{\R^d} \left(\Q(\psi,\psi)-\Q(\varphi,\varphi)\right)H(t,\xi)\d\xi.$$
Thanks to Cauchy-Schwarz inequality, we have, for $\kappa>0$,  
\begin{multline*}
\mathcal{I}_{\psi,\varphi}^1(t) \leq \left|\mathbf{A}_\varphi(t)-\mathbf{A}_\psi(t)\right| \|\varphi(t)\|_{L^1_{2+\gamma}} 
+\left|\mathbf{B}_\varphi(t)-\mathbf{B}_\psi(t)\right|
 \|\nabla \varphi(t)\|_{L^{1}_{3+\gamma}}\\
 \leq \left|\mathbf{A}_\varphi(t)-\mathbf{A}_\psi(t)\right| \|\varphi(t)\|_{L^1_{2+\gamma}} 
+C_{\kappa}\left|\mathbf{B}_\varphi(t)-\mathbf{B}_\psi(t)\right|
 \|\varphi(t)\|_{\mathbb{H}^1_{3+\gamma+\frac{d+\kappa}{2}}}
 \end{multline*}
 where $C_{\kappa}:=\int_{\R^{d}}\langle \xi\rangle^{-d-\kappa}\d\xi < \infty.$ Thus, 
$$\mathcal{I}_{\psi,\varphi}^1(t) \leq \Lambda(t)\int_{\R^d}|F(t,\xi)| (1+|\xi|^{2+\gamma})\, \d\xi,$$
where $\Lambda\in L^1(0,T)$ by \eqref{H1norm}. Now, 
$$ 
(\Q_-(\psi,\psi)-\Q_-(\varphi,\varphi))(t,\xi)=F(t,\xi)L(\psi)(t,\xi)+\varphi(t,\xi)L(F)(t,\xi) 
$$
from which we deduce that  
$$\mathcal{I}_{\psi,\varphi}^2(t) \leq  - \alpha\int_{\R^d}  |F(t,\xi)|L(\psi)(t,\xi) (1+|\xi|^{2+\gamma})\,\d\xi +  c_\gamma \|\varphi(t)\|_{L^1_{2+\gamma}} \|F(t)\|_{L^1}.$$
Finally, proceeding as in \cite[Theorem 4.1]{MiWe99}, we get 
\begin{multline*} 
\mathcal{I}_{\psi,\varphi}^3(t) \leq (\|\psi(t)\|_{L^1_{\gamma}}+\|\varphi(t)\|_{L^1_{\gamma}}) \|F(t)\|_{L^1_\gamma}\\
+ \frac{1}{2}\int_{\R^d}\int_{\R^d}|\xi-\xi_*|^\gamma |F(t,\xi)|
(\varphi+\psi)(t,\xi_*) {\mathcal K}(\xi,\xi_*) \d\xi\d\xi_*, 
\end{multline*}
where 
$${\mathcal K}(\xi,\xi_*)   = \int_{\S^{d-1}}(|\xi'|^{2+\gamma}+|\xi'_*|^{2+\gamma}+|\xi_*|^{2+\gamma}-|\xi|^{2+\gamma})b(\cos \theta)\d\sigma.$$
We then deduce from  Lemma \ref{lemGPHI} derived in the next Section and \cite[Lemma 2]{BoGaPa} that
\bean 
{\mathcal K}(\xi,\xi_*) & \leq & \varrho_{1+\frac{\gamma}{2}} 
\left( (|\xi|^2+|\xi_*|^2)^{1+\gamma/2} - |\xi|^{2+\gamma} -|\xi_*|^{2+\gamma}\right)
-(1-\varrho_{1+\frac{\gamma}{2}}) |\xi|^{2+\gamma} + (1+\varrho_{1+\frac{\gamma}{2}})|\xi_*|^{2+\gamma} \\
& \leq & \varrho_{1+\frac{\gamma}{2}} \left(1+\frac{\gamma}{2}\right))
\left( |\xi|^2|\xi_*|^\gamma + |\xi|^\gamma |\xi_*|^2 \right)+ (1+\varrho_{1+\frac{\gamma}{2}})|\xi_*|^{2+\gamma}, 
\eean
where $\varrho_{1+\gamma/2}$ is defined subsequently by \eqref{varrhoK}.
Thus, using the estimate $|\xi-\xi_*|^\gamma\leq |\xi|^\gamma+|\xi_*|^\gamma$, we obtain 
$$\mathcal{I}_{\psi,\varphi}^3(t) \leq C \int_{\R^d}|F(t,\xi)| (1+|\xi|^{2+\gamma})\, \d\xi,  $$
for some constant $C>0$. We finally deduce from the above estimates that there
exists some function $\overline{\Lambda} \in L^1(0,T)$ such that
$$\dfrac{\d}{\d t}\int_{\R^d}|F(t,\xi)| (1+|\xi|^{2+\gamma})\, \d\xi 
\leq \overline{\Lambda}(t) \, \int_{\R^d}|F(t,\xi)| (1+|\xi|^{2+\gamma})\, \d\xi
 \qquad \forall t \in [0,T]$$
which gives the result.\end{proof}

\section{Moment estimates}\label{sec_mom}

We now prove uniform in time estimates of higher-order moments of the solution to \eqref{BEscaled} yielding to a proof of Theorem \ref{theomom}. We fix a nonnegative initial distribution $\psi_0$ satisfying \eqref{massenergie} and such that
$$\psi_0\in L^1_{2+\gamma}(\R^d)\cap L^p(\R^d)$$
for some $p >1$. Let then $\psi
\in\C([0,\infty);w-L^1(\R^d)) \cap L^\infty_{\mathrm{loc}}((0,\infty),L^1_{2+\gamma}(\R^d)) $
be a nonnegative solution to \eqref{BEscaled}-\eqref{CI}. We define, for any $k \geq 0$, the following moment of order $2k$:
$$M_k(t)=\int_{\R^d}\psi(t,\xi)\,|\xi|^{2k}\d\xi \qquad \qquad k \geq 0.$$
Using \eqref{BEscaled}, one easily gets that $M_k(t)$ satisfies the following identity
\begin{equation*}
\dfrac{\d}{\d t}M_k(t)=-\left(\mathbf{A}_\psi(t)-(d+2k)\mathbf{B}_\psi(t)\right)M_k(t) + \int_{\R^d}\mathbb{B}(\psi,\psi)(t,\xi)\,|\xi|^{2k}\d\xi, \qquad t >0.
\end{equation*}
Let us define
$$\mathbf{a}_\psi(t)=\int_{\R^d}\Q_-(\psi,\psi)(t,\xi)\d\xi \qquad \text{ and } \qquad \mathbf{b}_\psi(t)=\int_{\R^d}\Q_-(\psi,\psi)(t,\xi)\,|\xi|^2\,\d\xi$$
so that
$$\mathbf{A}_\psi(t)=-\dfrac{\alpha}{2}(d+2)\mathbf{a}_\psi(t)+\alpha\mathbf{b}_\psi(t) \qquad \text{ and } \qquad \mathbf{B}_\psi(t)=-\dfrac{\alpha}{2} \mathbf{a}_\psi(t)+\frac{\alpha}{d}\mathbf{b}_\psi(t).$$
Then, $M_k(t)$ satisfies
\begin{equation}\label{dmk}
\dfrac{\d}{\d t}M_k(t)+\a(k-1)\mathbf{a}_\psi(t)M_k(t)=\tfrac{2\a\,k}{d}\mathbf{b}_\psi(t)M_k(t) + \int_{\R^d}\mathbb{B}(\psi,\psi)(t,\xi)\,|\xi|^{2k}\d\xi.\end{equation}
In order to estimate in a precise way the last integral involving $\mathbb{B}(\psi,\psi)$, we shall resort to Povzner's estimates as derived in \cite{BoGaPa}.

\subsection{Povzner-type inequalities}
For any convex function $\Phi\::\:\R \to \R$, one has
\begin{equation}\label{mathB}
\int_{\R^d}\mathbb{B}(\psi,\psi)(t,\xi)\Phi(|\xi|^2)\d\xi=\int_{\R^{2d}}\psi(t,\xi)\psi(t,\xi_*)|\xi-\xi_*|^\gamma\mathcal{W}_\Phi(\xi,\xi_*)\d\xi\d\xi_*\end{equation}
where
\begin{equation}\label{K}\mathcal{W}_\Phi(\xi,\xi_*)=\dfrac{1}{2}
\int_{\S^{d-1}}\bigg[(1-\alpha)\Phi(|\xi'|^2)+(1-\alpha)\Phi(|\xi'_*|^2)-\Phi(|\xi|^2)-\Phi(|\xi_*|^2)\bigg]b(\cos \theta)\d\sigma.\end{equation}
Clearly
$$\mathcal{W}_\Phi(\xi,\xi_*)=(1-\alpha)\mathcal{G}_\Phi(\xi,\xi_*) -\frac{1}{2}\left(\Phi(|\xi|^2)+\Phi(|\xi_*|^2)\right)$$
with
$$\mathcal{G}_\Phi(\xi,\xi_*)=\dfrac{1}{2}\int_{\S^{d-1}}\bigg[ \Phi(|\xi'|^2)+ \Phi(|\xi'_*|^2)\bigg]b(\cos\theta)\d\sigma$$
where we recall that we assumed $\|b\|_{\L}=1.$ The following lemma allows to estimate $\mathcal{G}_\Phi(\xi,\xi_*)$ for any convex function $\Phi$.
\begin{lem}\label{lemGPHI} Let $\Phi\::\:\R \to \R$ be convex. Then,\begin{equation}\label{GPHI}
\mathcal{G}_\Phi(\xi,\xi_*)\leq \frac{1}{2} \int_{\S^{d-1}} \bigg[ \Phi\left(E\dfrac{1+ \hat{U} \cdot \sigma}{2}\right)+\Phi\left(E\dfrac{1- \hat{U} \cdot \sigma}{2}\right)\bigg]b(\hat{u} \cdot \sigma)\d\sigma\end{equation}
where, for any fixed $\xi,\xi_*$, we set
 $$U=\dfrac{\xi+\xi_*}{2},\qquad u=\xi-\xi_*, \qquad E=|\xi|^2+|\xi_*|^2, \qquad \hat{U}=U/|U|,\qquad \hat{u}=u/|u|.$$ 
\end{lem}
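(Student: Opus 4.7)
The plan is to reduce \eqref{GPHI} to a pointwise inequality on the integrand, which follows from convexity of $\Phi$ once the post-collisional speeds are written in a suitable normalized form.

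First I would set $U=\xi+\xi_*$ and $u=\xi-\xi_*$, and record the elementary identity $|U|^2+|u|^2=2E$. A direct computation using the parametrization of $\xi'$ and $\xi'_*$ gives
\begin{equation*}
|\xi'|^{2}=\frac{E}{2}+\frac{|u|\,U\cdot\sigma}{2},\qquad |\xi'_{*}|^{2}=\frac{E}{2}-\frac{|u|\,U\cdot\sigma}{2}.
\end{equation*}
Setting $s=|u||U|/E$ (so $s\in[0,1]$ by AM--GM applied to $|U|^2+|u|^2=2E$) and $\hat U=U/|U|$, this becomes
\begin{equation*}
|\xi'|^{2}=\frac{E}{2}\bigl(1+s\,\hat U\cdot\sigma\bigr),\qquad |\xi'_{*}|^{2}=\frac{E}{2}\bigl(1-s\,\hat U\cdot\sigma\bigr).
\end{equation*}

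The key step is then the pointwise bound: for every $\sigma\in\S^{d-1}$ and with $x=\hat U\cdot\sigma\in[-1,1]$,
\begin{equation*}
\Phi\!\left(\tfrac{E(1+sx)}{2}\right)+\Phi\!\left(\tfrac{E(1-sx)}{2}\right)\leq \Phi\!\left(\tfrac{E(1+x)}{2}\right)+\Phi\!\left(\tfrac{E(1-x)}{2}\right).
\end{equation*}
This follows from the convex combinations
\begin{equation*}
\tfrac{E(1+sx)}{2}=\tfrac{1+s}{2}\cdot\tfrac{E(1+x)}{2}+\tfrac{1-s}{2}\cdot\tfrac{E(1-x)}{2},\qquad \tfrac{E(1-sx)}{2}=\tfrac{1-s}{2}\cdot\tfrac{E(1+x)}{2}+\tfrac{1+s}{2}\cdot\tfrac{E(1-x)}{2},
\end{equation*}
which, by convexity of $\Phi$, yield two inequalities whose sum telescopes to the claim (the coefficients of $\Phi(E(1\pm x)/2)$ on the right add to one).

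Finally, I would integrate this pointwise bound against $b(\hat u\cdot\sigma)\,\d\sigma$ on $\S^{d-1}$ (note $b$ depends on $\hat u\cdot\sigma$, not on $\hat U\cdot\sigma$, so it simply factors through the inequality) and divide by $2$; in view of the definition \eqref{K} of $\mathcal G_\Phi$ with the expressions for $|\xi'|^2,|\xi'_*|^2$ derived above, this produces exactly \eqref{GPHI}. There is no real obstacle here: the only subtle point is recognizing the reduction to the one-parameter family $s\in[0,1]$ that makes the convexity argument transparent, and correctly keeping $b$ as a function of $\hat u\cdot\sigma$ throughout (it plays no role in the convex-combination argument, which is carried out entirely in the $\hat U\cdot\sigma$ variable).
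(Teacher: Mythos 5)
Your proposal is correct and follows essentially the same route as the paper: reduce to the normalized form $|\xi'|^2=\tfrac{E}{2}(1+s\,\hat U\cdot\sigma)$, $|\xi'_*|^2=\tfrac{E}{2}(1-s\,\hat U\cdot\sigma)$ with $s\in[0,1]$, apply convexity of $\Phi$ pointwise, and integrate using $b\geq 0$. The only cosmetic difference is that you verify the key monotonicity (the map $t\mapsto\Phi(x+ty)+\Phi(x-ty)$ being nondecreasing) directly via explicit convex combinations, whereas the paper invokes it as a known fact from the Povzner-inequality literature.
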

\begin{proof} We give a very short proof of the lemma, referring to \cite{BoGaPa} for the general strategy. For any fixed $\xi,\xi_*$, with the above notations one has $\cos \theta=\hat{u}\cdot \sigma$ and
 $$|\xi'|^2=E\dfrac{1+\lambda \hat{U} \cdot \sigma}{2} \qquad \text{ while } \qquad |\xi'_*|^2=E\dfrac{1-\lambda \hat{U} \cdot \sigma}{2}$$
where $\lambda=2\dfrac{|u|\,|U|}{E} \leq 1.$
Since $\Phi$ is convex, one can prove as in \cite{BoGaPa} that, for any fixed $x,y >0$, the mapping $t \mapsto \Phi(x+ty)+\Phi(x-ty)$ is nondecreasing and, because $\lambda \leq 1$, we have
\begin{equation*}\begin{split}
\Phi(|\xi'|^2)+ \Phi(|\xi'_*|^2)&=\Phi\left(E\dfrac{1+\lambda \hat{U} \cdot \sigma}{2}\right)+\Phi\left(E\dfrac{1-\lambda \hat{U} \cdot \sigma}{2}\right)\\
 &\leq  \Phi\left(E\dfrac{1+ \hat{U} \cdot \sigma}{2}\right)+\Phi\left(E\dfrac{1- \hat{U} \cdot \sigma}{2}\right).\end{split}\end{equation*}
Since $b(\cdot)$ is nonnegative, this gives \eqref{GPHI} after integration.\end{proof}
With the special choice $\Phi(x)=x^k$, $k \geq 1$, one has the following estimate
\begin{lem}\label{lemMk} For any $k \geq 1$, one has
$$\int_{\R^d}\mathbb{B}(\psi,\psi)(t,\xi)\,|\xi|^{2k}\d\xi \leq -(1-\beta_k(\a)) \, M_{k+\frac{\gamma}{2}}(t)+S_k(t)$$
with
\begin{multline*}
S_k(t)= \beta_k(\alpha)\sum^{[\frac{k+1}{2}]}_{j=1}\left(
\begin{array}{c}
k\\j
\end{array}
\right)\left(M_{j+\frac{\gamma}{2}}(t)\;M_{k-j}(t)+M_{j}(t)\;M_{k-j+\frac{\gamma}{2}}(t)\right)\\
+ (1-\beta_k(\a))M_k(t)\, M_{\frac{\gamma}{2}}(t)
\end{multline*}
where $[\frac{k+1}{2}]$ denote the integer part of $\frac{k+1}{2}$, $\beta_k(\a)=(1-\a)\varrho_k$ and
\begin{equation}\label{varrhoK}
\varrho_k = \sup_{\hat{U}, \hat{u} \in \S^{d-1}}  \int_{\S^{d-1}}\left[ \left(\dfrac{1+ \hat{U} \cdot \sigma}{2}\right)^k+\left(\dfrac{1- \hat{U} \cdot \sigma}{2}\right)^k\right]b(\hat{u}\cdot \sigma)\d\sigma.
\end{equation}
\end{lem}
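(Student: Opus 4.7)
The starting point is the weak identity \eqref{mathB} applied with $\Phi(x)=x^k$, which is convex on $[0,\infty)$ since $k\geq 1$. The decomposition $\mathcal{W}_\Phi=(1-\alpha)\mathcal{G}_\Phi-\tfrac{1}{2}(|\xi|^{2k}+|\xi_*|^{2k})$ separates the contribution of the gain part from that of the loss part; the latter will yield the coercive term, the former (after Povzner-type control) the remainder $S_k$.

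The first concrete step is to feed $\Phi(x)=x^k$ into Lemma \ref{lemGPHI}, which gives the pointwise bound $\mathcal{G}_\Phi(\xi,\xi_*)\leq \tfrac{E^k}{2}\varrho_k$ with $E=|\xi|^2+|\xi_*|^2$ and $\varrho_k$ as in \eqref{varrhoK}. Expanding $E^k$ by the binomial formula and isolating the two diagonal monomials $|\xi|^{2k}$ and $|\xi_*|^{2k}$ (each with coefficient $1$) from the mixed ones $|\xi|^{2j}|\xi_*|^{2(k-j)}$, $1\leq j\leq k-1$, produces
\begin{equation*}
\mathcal{W}_\Phi(\xi,\xi_*)\;\leq\;-\frac{1-\beta_k(\alpha)}{2}\bigl(|\xi|^{2k}+|\xi_*|^{2k}\bigr)+\frac{\beta_k(\alpha)}{2}\sum_{j=1}^{k-1}\binom{k}{j}|\xi|^{2j}|\xi_*|^{2(k-j)},
\end{equation*}
with $\beta_k(\alpha)=(1-\alpha)\varrho_k$.

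Next, multiply by $\psi(t,\xi)\psi(t,\xi_*)|\xi-\xi_*|^{\gamma}$ and integrate. For the diagonal (negative) contribution I would symmetrize in $(\xi,\xi_*)$ so that only $|\xi|^{2k}$ appears, then use the subadditivity $|\xi|^{\gamma}\leq|\xi-\xi_*|^{\gamma}+|\xi_*|^{\gamma}$ valid for $\gamma\in(0,1]$ to write
\begin{equation*}
-|\xi-\xi_*|^{\gamma}\,|\xi|^{2k}\;\leq\;-|\xi|^{2k+\gamma}+|\xi|^{2k}|\xi_*|^{\gamma}.
\end{equation*}
Using the mass normalization $\int\psi\,\mathrm{d}\xi=1$ inherited from \eqref{massenergie}, this integrates to $-(1-\beta_k(\alpha))\bigl(M_{k+\gamma/2}(t)-M_k(t)M_{\gamma/2}(t)\bigr)$, giving both the leading gain term and the last piece of $S_k$. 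For the mixed terms I would apply the opposite side of subadditivity, $|\xi-\xi_*|^{\gamma}\leq|\xi|^{\gamma}+|\xi_*|^{\gamma}$, so that each integrand becomes a product of moments; the $j\leftrightarrow k-j$ symmetry of the integration against $\psi\otimes\psi$ together with $\binom{k}{j}=\binom{k}{k-j}$ allows one to fold the sum and bound it by twice the sum over $1\leq j\leq [(k+1)/2]$, yielding exactly the $\beta_k(\alpha)$-part of $S_k$.

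The only delicate point is the interpretation and control of $\varrho_k$: as written, $\varrho_k$ depends (through $\hat U$) on $(\xi,\xi_*)$, so treating it as a constant amounts to taking a supremum over the angle between $\hat U$ and $\hat u$. This is harmless because for $k\geq 1$ the even polynomial $x\mapsto ((1+x)/2)^k+((1-x)/2)^k$ is bounded by $1$ on $[-1,1]$, with strict inequality away from the endpoints, so combined with $\|b\|_1=1$ one gets $\varrho_k\leq 1$, hence $1-\beta_k(\alpha)\geq\alpha>0$. This strict positivity is what makes the coercive term $-(1-\beta_k(\alpha))M_{k+\gamma/2}(t)$ truly dissipative and is the feature that will be exploited when closing the moment system in the subsequent a priori estimates.
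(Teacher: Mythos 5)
Your proof follows the paper's own strategy for the first half: Lemma \ref{lemGPHI} with $\Phi(x)=x^k$, the split of $\mathcal{W}_\Phi$ into a dissipative diagonal part and a $\beta_k(\a)$-weighted remainder, the lower bound $|\xi-\xi_*|^\gamma\geq|\xi|^\gamma-|\xi_*|^\gamma$ together with $M_0(t)=1$ for the coercive term, and the upper bound $|\xi-\xi_*|^\gamma\leq|\xi|^\gamma+|\xi_*|^\gamma$ for the remainder. Your remark that $\varrho_k$, as written, still depends on the angle between $\hat U$ and $\hat u$ (unless $b$ is constant) and that the crude bound $\varrho_k\leq\|b\|_1=1$, hence $1-\beta_k(\a)\geq\a>0$, suffices at the level of this lemma is a fair and harmless reading of the paper.

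The genuine gap is the step ``expand $E^k$ by the binomial formula.'' The lemma is stated for every real $k\geq 1$, and the values actually used downstream are non-integer: $k=1+\frac{\g}{2}\in(1,\tfrac32]$ in Proposition \ref{theoMom}, and half-integers in the corollary on higher moments. For non-integer $k$ there is no finite binomial expansion of $(|\xi|^2+|\xi_*|^2)^k$, so your pointwise inequality with the sum $\sum_{j=1}^{k-1}\binom{k}{j}|\xi|^{2j}|\xi_*|^{2(k-j)}$, and the subsequent folding via $\binom{k}{j}=\binom{k}{k-j}$ over $j\leftrightarrow k-j$, are simply not available (the folding and the factor count are correct, but only when $k\in\N$). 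The paper keeps the difference $\left(|\xi|^2+|\xi_*|^2\right)^k-|\xi|^{2k}-|\xi_*|^{2k}$ intact and invokes \cite[Lemma 2]{BoGaPa}, which is exactly the generalized Povzner-type inequality
\begin{equation*}
(x+y)^k-x^k-y^k\;\leq\;\sum_{j=1}^{[\frac{k+1}{2}]}\binom{k}{j}\left(x^j y^{k-j}+x^{k-j}y^j\right),\qquad x,y>0,
\end{equation*}
valid for all real $k>1$ with generalized binomial coefficients; this is a nontrivial statement whose proof is not the plain binomial theorem. To close your argument you must either cite that inequality or prove it for non-integer $k$; as written, your proof establishes the lemma only for integer $k$, which excludes precisely the exponents the paper needs.
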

\begin{proof} One applies the above estimate \eqref{GPHI} with the convex function $\Phi(x)=x^k$ to get
$$\mathcal{G}_\Phi(\xi,\xi_*)\leq\frac{1}{2}  \varrho_k\,E^k$$
where $E=|\xi|^2+|\xi_*|^2$.   One gets therefore
$$\mathcal{W}_\Phi (\xi,\xi_*) \leq -\frac{1}{2}\left(1-\beta_k(\a)\right)\left(|\xi|^{2k}+|\xi_*|^{2k}\right)+\frac{1}{2}\beta_k(\a)\big[\left(|\xi|^2+|\xi_\ast|^2\right)^k-|\xi|^{2k}-|\xi_*|^{2k}\big]$$
where $(1-\beta_k(\a)) >0$. Consequently,
\begin{multline}\label{Qp}
\int_{\R^d}\mathbb{B}(\psi,\psi)(t,\xi)\,|\xi|^{2k}\d\xi\leq  -(1-\beta_k(\a))\int_{\R^d}\psi(t,\xi)|\xi|^{2k}\d\xi\int_{\R^d}\psi(t,\xi_*)|\xi-\xi_*|^\gamma\d\xi_*\\
+\dfrac{\beta_k(\a)}{2}\int_{\R^{2d}}\psi(t,\xi)\psi(t,\xi_*)|\xi-\xi_*|^\gamma \big[\left(|\xi|^2+|\xi_*|^2\right)^k-|\xi|^{2k}-|\xi_*|^{2k}\big]\d\xi\d\xi_*.
\end{multline}
One then applies \cite[Lemma 2]{BoGaPa} with  $x=|\xi|^2$ and $y=|\xi_*|^2$ and uses the estimate
$$|\xi-\xi_*|^\gamma \leq |\xi|^\gamma+|\xi_*|^\gamma $$
to get
\begin{multline*}
\int_{\R^d}\mathbb{B}(\psi,\psi)(t,\xi)\,|\xi|^{2k}\d\xi \leq  -(1-\beta_k(\a))\int_{\R^d}\psi(t,\xi)|\xi|^{2k}\d\xi\int_{\R^d}\psi(t,\xi_*)|\xi-\xi_*|^\gamma\d\xi_*\\
+ \beta_k(\alpha)\sum^{[\frac{k+1}{2}]}_{j=1}\left(
\begin{array}{c}
k\\j
\end{array}
\right)\left(M_{j+\gamma/2}(t)\;M_{k-j}(t)+M_{j}(t)\;M_{k-j+\gamma/2}(t)\right).
\end{multline*}
To estimate the nonpositive term, one notices that
$$|\xi-\xi_*|^\gamma\geq  \,|\xi|^\gamma-|\xi_*|^\gamma$$
and gets
 $$\int_{\R^d}\psi(t,\xi)|\xi|^{2k}\d\xi\int_{\R^d}\psi(t,\xi_*)|\xi-\xi_*|^\gamma\d\xi_* \geq  M_{k+\frac{\gamma}{2}}(t)-M_k(t)\, M_{\frac{\gamma}{2}}(t).$$
This clearly yields the conclusion.
\end{proof}
\begin{rmq}It is easy to check that $\varrho_1 = \|b\|_{\L}= 1$ and that the mapping $k > 1 \mapsto \varrho_k \geq 0$ is strictly decreasing.
\end{rmq}
\subsection{Uniform estimates} Thanks to the above lemma, we can derive uniform in time estimates of $M_k(t)$ for $k=1+\frac{\g}{2}$. Precisely, one has the following:
\begin{prop}\label{theoMom} Let $$\alpha_0=\frac{1-\varrho_{1+\frac{\g}{2}}}{1+\frac{\g}{2}-\varrho_{1+\frac{\g}{2}}} \in (0,1]$$ where $\varrho_k$ is defined by \eqref{varrhoK} for any $k \geq 1.$ Then, if $0 <\a < \alpha_0$, there exists a constant $\overline{M}$ depending only on $\alpha$, $\gamma$, $b(\cdot)$ and $d$ such that any solution $\psi(t)$ to \eqref{BEscaled} satisfies
$$\sup_{t \geq 0}M_{1+\frac{\g}{2}}(t) \leq \max\left\{M_{1+\frac{\g}{2}}(0),\overline{M}\right\}.$$
\end{prop}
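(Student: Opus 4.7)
My plan is to apply the moment identity \eqref{dmk} with the distinguished exponent $k = 1+\tfrac{\gamma}{2}$ and convert it into a Riccati-type differential inequality for $m(t) := M_k(t)$. Since $\gamma \in (0,1]$ gives $k \in (1,\tfrac{3}{2}]$ and hence $[(k+1)/2] = 1$, the Povzner estimate of Lemma~\ref{lemMk} produces
\[
\int_{\R^d} \mathbb{B}(\psi,\psi)(t,\xi)\,|\xi|^{2k}\,\d\xi \leq -\bigl(1-\beta_k(\alpha)\bigr)\,M_{k+\gamma/2}(t) + S_k(t),
\]
where the remainder $S_k(t)$ involves only the $j=1$ cross-terms $M_k(t)M_{\gamma/2}(t)$ and $M_1(t)M_\gamma(t)$. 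Because $|\xi|^{\gamma}\leq 1+|\xi|^2$ and mass and energy are preserved ($M_0\equiv 1$, $M_1\equiv d/2$), the sub-quadratic moments $M_{\gamma/2}(t)$ and $M_\gamma(t)$ are each bounded by $1+d/2$; hence $S_k(t) \leq C_1\,m(t)+C_2$ with explicit constants depending only on $\alpha,\gamma,b(\cdot),d$.

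Next, I would control the remaining terms in \eqref{dmk}. Discarding the non-positive contribution $-\alpha(k-1)\mathbf{a}_\psi(t)\,m(t)$ (here $\mathbf{a}_\psi \ge 0$) and using $\mathbf{b}_\psi(t) \leq M_k(t) + \tfrac{d}{2}M_{\gamma/2}(t)$, which follows at once from $|\xi-\xi_*|^\gamma \leq |\xi|^\gamma + |\xi_*|^\gamma$, yields
\[
m'(t) \leq \tfrac{2\alpha k}{d}\,m(t)^2 + C_3\,m(t) + C_4 - \bigl(1-\beta_k(\alpha)\bigr)\,M_{k+\gamma/2}(t).
\]
The closure is provided by a Cauchy--Schwarz interpolation exploiting the arithmetic identity $k = \tfrac{1}{2}\bigl(1+(k+\gamma/2)\bigr)$: since $\int\psi\,\d\xi = 1$, log-convexity of moments yields
\[
m(t) = M_k(t) \leq M_1(t)^{1/2}\,M_{k+\gamma/2}(t)^{1/2} = \sqrt{\tfrac{d}{2}}\;M_{1+\gamma}(t)^{1/2},
\]
so that $M_{k+\gamma/2}(t) \geq \tfrac{2}{d}\,m(t)^2$. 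Substituting back produces the Riccati differential inequality
\[
m'(t) \leq -\tfrac{2}{d}\bigl[1-(1-\alpha)\varrho_k - \alpha k\bigr]\,m(t)^2 + C_3\,m(t) + C_4.
\]

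The leading coefficient is strictly negative if and only if $\alpha(k-\varrho_k) < 1-\varrho_k$, i.e.\ $\alpha < \alpha_0 = (1-\varrho_{1+\gamma/2})/(1+\gamma/2 - \varrho_{1+\gamma/2})$, which is exactly the threshold in the statement. For such $\alpha$, the right-hand side is a downward-opening quadratic vanishing at a unique positive root $\overline{M}$ depending only on $\alpha,\gamma,b(\cdot),d$; applied to $\phi(t) := \max(m(t),\overline{M})$, a standard absorbing-set comparison shows $\phi$ is non-increasing, yielding $m(t) \leq \max(m(0),\overline{M})$ for every $t \geq 0$, which is the claim. The principal obstacle is that the nonlinear drift $\tfrac{2\alpha k}{d}\mathbf{b}_\psi\,m$ has natural size $m^2$, whereas the Povzner dissipation $(1-\beta_k(\alpha))M_{k+\gamma/2}$ only dominates $m^{1+\gamma/(2k)}$ through the generic Jensen bound $M_{k+\gamma/2} \geq m^{1+\gamma/(2k)}$. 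The specific choice $k = 1+\gamma/2$ makes $M_k$ the midpoint (in the log-convexity sense) between the \emph{conserved} energy $M_1$ and the dissipation moment $M_{1+\gamma}$, upgrading the dissipative lower bound to the quadratic $M_{k+\gamma/2}\geq \tfrac{2}{d}m^2$; the threshold $\alpha_0$ then emerges as precisely the condition for dissipation to dominate drift.
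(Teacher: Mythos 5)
Your proof is correct and follows essentially the same route as the paper: the same moment identity \eqref{dmk} with $k=1+\tfrac{\gamma}{2}$, the same Povzner bound from Lemma \ref{lemMk}, the same estimate $\mathbf{b}_\psi \leq M_{1+\gamma/2}+\tfrac{d}{2}M_{\gamma/2}$, and the same interpolation $M_{1+\gamma/2}^2 \leq M_1 M_{1+\gamma}$ turning the dissipation into a quadratic term, leading to the identical threshold $c_{\a,k,d}=1-\beta_k(\a)-\a k>0$, i.e. $\a<\alpha_0$, and a Riccati-type comparison. The only cosmetic difference is that the paper writes the Hölder interpolation for general $k\geq 1+\tfrac{\gamma}{2}$ before specializing, whereas you invoke the Cauchy--Schwarz midpoint structure directly at $k=1+\tfrac{\gamma}{2}$.
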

\begin{proof} Let us fix $k >1$.  Since $\mathbf{a}_\psi(t) \geq 0$, one gets from \eqref{dmk}:
$$\dfrac{\d}{\d t}M_k(t) \leq \tfrac{2\a\,k}{d}\mathbf{b}_\psi(t)M_k(t) + \int_{\R^d}\mathbb{B}(\psi,\psi)(t,\xi)\,|\xi|^{2k}\d\xi.$$
Now, we recall that
$$\mathbf{b}_\psi(t)= \int_{\R^d\times\R^d}|\xi-\xi_*|^\gamma \psi(t,\xi)\psi(t,\xi_*)|\xi|^2\d\xi\d\xi_*$$
so that, since $|\xi-\xi_*|^\gamma \leq  |\xi|^\gamma+|\xi_*|^\gamma $, one has
$$\mathbf{b}_\psi(t) \leq  M_{1+\frac{\gamma}{2}}(t)+M_{\frac{\gamma}{2}}(t)M_1(t) \leq
 M_{1+\frac{\gamma}{2}}(t)+\frac{d}{2}(1+\frac{d}{2})$$
 where we recall that $M_1(t)=M_1(0)=\frac{d}{2}$ for any $t \geq 0.$ We get therefore
\begin{equation*}\dfrac{\d}{\d t}M_k(t) \leq  \frac{2\a\,k}{d} M_{1+\frac{\gamma}{2}}(t)M_k(t)+  \a k(1+\frac{d}{2}) M_k(t)+\int_{\R^d}\mathbb{B}(\psi,\psi)(t,\xi)\,|\xi|^{2k}\d\xi.\end{equation*}
Now, one estimates the last integral thanks to Lemma \ref{lemMk} and get
\begin{multline}\label{dMp1}
\dfrac{\d}{\d t}M_k(t)+ (1-\beta_k(\a)) \, M_{k+\frac{\gamma}{2}}(t) \leq S_k(t) + \frac{2\a\,k}{d} M_{1+\frac{\gamma}{2}}(t)M_k(t)
+ \a k(1+\frac{d}{2}) M_k(t).
\end{multline}
Using now H\"{o}lder's inequality, one has, for $k\geq 1+\frac{\gamma}{2}$,
$$M_{k+\frac{\gamma}{2}}(t)\geq \left(\frac{2}{d}\right)^{\frac{\gamma}{2k-2}} \,
  \left(M_k(t)\right)^{\frac{2k+\gamma-2}{2k-2}}  \qquad  \mbox{ and } \qquad
M_{1+\frac{\gamma}{2}}(t)\leq \left(\frac{2}{d}\right)^{{-1+\frac{\gamma}{2k-2}}} \,
\left(M_k(t)\right)^{\frac{\gamma}{2k-2}}$$
where we used again that $M_1(t)=\frac{d}{2}$ for any $t\geq 0.$ With these estimates, \eqref{dMp1} becomes
\begin{equation}\label{dMp2}
\dfrac{\d}{\d t}M_k(t)+  c_{\a,k,d}
\left(\frac{2}{d}\right)^{\frac{\gamma}{2k-2}}  \left(M_{k}(t)\right)^{1+\frac{\gamma}{2k-2}}
 \leq S_k(t) +  \a k(1+\frac{d}{2}) M_k(t),\end{equation}
 with
$$ c_{\a,k,d}=1-\beta_k(\a)-\a k=1-\varrho_k+\a(\varrho_k-k).$$
Notice that
\begin{equation}\label{c>0} c_{\a,k,d} >0 \qquad \Longleftrightarrow \qquad 0 < \a < \frac{1-\varrho_k}{k-\varrho_k}.\end{equation}
Taking now $k=1+\frac{\g}{2}$ in the above inequality  \eqref{dMp2} and using the explicit expression of $S_{1+\frac{\g}{2}}(t)$   we find
\begin{equation*}\begin{split}
\dfrac{\d}{\d t}M_{1+\frac{\g}{2}}(t)+ c_{\a,1+\frac{\g}{2},d}\left(\frac{2}{d}\right) M_{1+\frac{\g}{2}}(t)^{2}
\leq \beta_{1+\frac{\g}{2}}(\a)&\left(\begin{array}{c}
1+\frac{\g}{2}\\1
\end{array}
\right)\left(M_{1+\frac{\g}{2}}(t)M_{\frac{\g}{2}}(t)+M_1(t)M_\g(t)\right) \\
&+(1-\beta_{1+\frac{\g}{2}}(\a))M_{1+\frac{\g}{2}}(t)M_{\frac{\g}{2}}(t)\\
&+ \a (1+\frac{\g}{2})(1+\frac{d}{2}) M_{1+\frac{\g}{2}}(t).
\end{split}\end{equation*}
Since $\gamma \leq 1$ and $M_1(t)=\frac{d}{2}$ for any $t \geq 0$, it is clear that $M_{\frac{\g}{2}}(t)$ and $M_\g(t)$ are uniformly bounded by $1+\frac{d}{2}$ so that there are two positive constants $C_0,C_1 >0$  depending only on $\alpha$, $\gamma$, $b(\cdot)$ and $d$ such that
$$\dfrac{\d}{\d t}M_{1+\frac{\g}{2}}(t)+ c_{\a,1+\frac{\g}{2},d}\left(\frac{2}{d}\right) M_{1+\frac{\g}{2}}(t)^{2} \leq C_0 M_{1+\frac{\g}{2}}(t)+C_1 \qquad \forall t \geq 0.$$
 Therefore, using \eqref{c>0} and some comparison principle, we get the conclusion.
\end{proof}
\begin{rmq}\label{rmqalpha0} The parameter $\alpha_0$ depends only on $\g,d$ and the collision kernel $b(\cdot)$. In particular, in dimension $d=3$, for constant collision kernel $b(\cdot)=\dfrac{1}{4\pi}$ (recall that $\|b\|_{\L}=1$) and with $\gamma=1$, one has $\varrho_{\frac{3}{2}}=\dfrac{4}{5}$ and $\alpha_0=\dfrac{2}{7}.$
\end{rmq}

Notice that the above result allows actually to deal with higher-order moments:
\begin{cor} With the notations of the above proposition, if $0 <\a < \alpha_0$ then any solution $\psi(t)$ to \eqref{BEscaled} satisfies for any $k \geq 1+\frac{\g}{2}$
\beq\label{h_o_mom}M_k(0) < \infty \Longrightarrow \sup_{t \geq 0}M_{k}(t) < \infty.\eeq
\end{cor}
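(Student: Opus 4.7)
The natural strategy is a bootstrap starting from the uniform bound on $M_{1+\g/2}(t)$ supplied by Proposition \ref{theoMom}. The crucial observation is that the inequality \eqref{dMp1}, derived in the course of proving that proposition, is in fact valid for every $k \geq 1$, and that the dissipation coefficient $1 - \beta_k(\a) = 1 - (1-\a)\varrho_k$ is strictly positive for every $k > 1$ regardless of how small $\a$ is, since $k \mapsto \varrho_k$ is strictly decreasing from $\varrho_1 = 1$. Thus \eqref{dMp1} provides a genuine sink $M_{k+\g/2}(t)$ on the left-hand side for any $k > 1$, and the game is simply to absorb every term on the right-hand side by this sink, up to a harmless additive constant.

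The drift contribution $\tfrac{2\a k}{d}M_{1+\g/2}(t)M_k(t) + \a k(1+\tfrac{d}{2})M_k(t)$ is dominated by $C_1 M_k(t)$ thanks to Proposition \ref{theoMom}. The Povzner remainder $S_k(t)$ in turn consists of a finite family of mixed products $M_a(t)M_b(t)$ whose indices satisfy $a + b = k + \g/2$, together with a linear contribution $(1-\beta_k(\a))M_k(t)M_{\g/2}(t)$, which is itself bounded by $C M_k(t)$ because $M_{\g/2}(t) \leq M_0(t) + M_1(t) = 1 + d/2$ is conserved. Since $M_0 \equiv 1$ and $M_1 \equiv d/2$ are also conserved, a direct H\"older interpolation shows that every single moment $M_r$ with $1 < r < k + \g/2$ satisfies $M_r \leq C_r \, M_{k+\g/2}^{\theta_r}$ with $\theta_r < 1$; the same computation applied to a product $M_a M_b$ with $a + b = k + \g/2$ yields $M_a M_b \leq C_{a,b} \, M_{k+\g/2}^{\theta'}$ with $\theta' = (k+\g/2 - 2)/(k+\g/2 - 1) < 1$. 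Consequently Young's inequality delivers $M_r \leq \varepsilon \, M_{k+\g/2} + C(\varepsilon)$ and $M_a M_b \leq \varepsilon \, M_{k+\g/2} + C(\varepsilon)$ for any $\varepsilon > 0$.

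Choosing $\varepsilon$ small enough to absorb all such terms into the dissipation, \eqref{dMp1} collapses to
\begin{equation*}
\dfrac{\d}{\d t}M_k(t) + \frac{1-\beta_k(\a)}{2}\, M_{k+\g/2}(t) \leq C_2,
\end{equation*}
with $C_2$ depending only on $\a$, $\g$, $d$, $b(\cdot)$, $k$ and on $\sup_{t\geq 0} M_{1+\g/2}(t)$. Invoking once more the H\"older lower bound $M_{k+\g/2}(t) \geq c_0 \, M_k(t)^{1+\g/(2k-2)}$ (which uses $M_1 = d/2$ and $k > 1$) transforms this into a Bernoulli-type ODE
\begin{equation*}
\dfrac{\d}{\d t}M_k(t) + c'\, M_k(t)^{1+\g/(2k-2)} \leq C_2,
\end{equation*}
from which the uniform bound $\sup_{t\geq 0} M_k(t) \leq \max\{M_k(0), \overline{M}_k\}$ follows by an elementary comparison argument, for some explicit $\overline{M}_k$.

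The main technical obstacle is that the Povzner estimate of Lemma \ref{lemMk} is written in a form that is most natural for integer $k$, because of the binomial sum appearing in $S_k$. A clean workaround is to first establish the corollary for each integer $n \geq 2$ by induction on $n$ (the base case being $M_1 = d/2$), exactly along the lines described above, and then to recover the statement for arbitrary real $k \in [1+\g/2, n]$ by a further H\"older interpolation between $M_1$ and $M_n$. This two-step reduction bypasses any subtlety in extending Povzner's bound to non-integer exponents while preserving the quantitative nature of the estimate.
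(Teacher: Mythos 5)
Your core argument is correct and is genuinely different from the paper's. The paper proves \eqref{h_o_mom} by induction over half-integer orders ($2k\in\N$, base case $k=3/2$): the inductive boundedness of all lower-order moments reduces the Povzner remainder to $S_k(t)\leq C_k+A_kM_k(t)$, the drift is controlled by Proposition \ref{theoMom} (this is where $\a<\a_0$ enters, exactly as in your proposal), and the proof closes with Jensen's bound $M_{k+\frac{\g}{2}}\geq M_k^{1+\frac{\g}{2k}}$ and an ODE comparison. You instead absorb \emph{all} of $S_k$ directly into the dissipation $(1-\beta_k(\a))M_{k+\frac{\g}{2}}$, interpolating each moment of order in $[1,k+\frac{\g}{2})$ between $M_1=\frac d2$ and $M_{k+\frac{\g}{2}}$ (moments of order below $1$ being bounded by $1+\frac d2$) and applying Young's inequality; no induction is needed. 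Your structural observation that $1-\beta_k(\a)=1-(1-\a)\varrho_k>0$ for every $k>1$, independently of $\a$, is correct and is the same fact the paper exploits. Done carefully, this direct absorption is a complete and arguably more streamlined proof, valid for every real $k\geq 1+\frac{\g}{2}$ at once.

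The genuine problem is your final ``workaround,'' which is both unnecessary and, as stated, wrong. It is unnecessary because Lemma \ref{lemMk} rests on \cite[Lemma 2]{BoGaPa}, which holds for every real $k\geq 1$ (the $\binom{k}{j}$ are generalized binomial coefficients), so your direct argument already covers non-integer $k$. It is wrong because, having established the corollary for integers $n\geq 2$, you cannot ``recover the statement for arbitrary real $k\in[1+\frac{\g}{2},n]$ by H\"older interpolation between $M_1$ and $M_n$'': that interpolation requires a uniform bound on $M_n(t)$, hence $M_n(0)<\infty$, whereas the hypothesis of \eqref{h_o_mom} only provides $M_k(0)<\infty$, and $M_n(0)$ may well be infinite for $n>k$. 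Interpolation controls \emph{lower} moments by higher ones, so the reduction goes in the wrong direction. The paper's own reduction to $2k\in\N$ is of a different nature: the induction runs over half-integer orders, all of which lie \emph{below} the given $k$ (hence have finite initial moments), the non-half-integer moments occurring in $S_k$ being dominated by nearby half-integer moments of order at most $k-\frac12$, after which the same differential inequality is applied to $M_k$ itself. If you simply delete your last paragraph and run the interpolation/Young absorption for the given real $k$, your proof is complete.
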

\begin{proof} The strategy follows classical arguments already used in \cite{BoGaPa}, the crucial point being that, for $k \geq 1+\frac{\g}{2}$, the first term in the expression of $S_k(t):$
\begin{multline*}
S_k(t)= \beta_k(\alpha)\sum^{[\frac{k+1}{2}]}_{j=1}\left(
\begin{array}{c}
k\\j
\end{array}
\right)\left(M_{j+\frac{\gamma}{2}}(t)\;M_{k-j}(t)+M_{j}(t)\;M_{k-j+\frac{\gamma}{2}}(t)\right)\\
+ (1-\beta_k(\a))M_k(t)\, M_{\frac{\gamma}{2}}(t)
\end{multline*}
involves only moments of order less than $\max\{k-1+\frac{\g}{2}, [\frac{k+1}{2}]+\frac{\gamma}{2}\}\leq \max\{k-\frac{1}{2}, [\frac{k+1}{2}]+\frac{\gamma}{2}\}$ since $\g \leq 1.$

First observe that mass is conserved and thus, using classical interpolation, it suffices to prove the result for any $k \geq 1+\frac{\g}{2}$ such that $2k\in \mathbb{N}$. We proceed by induction. Since $\gamma\in(0,1]$, the first step consists in checking that the result holds for $k=3/2$. We shall come back to this point later on. Let $k>3/2$ such that $2k\in\N$. Let us assume that for any $j$ satisfying $2j\in\N$ and $1\leq j\leq k-1/2$, there exists $K_j>0$ such that $M_j(t)\leq K_j$ for any $t\geq 0$.
Note that for such a $k$, then $\max\{k-\frac{1}{2}, [\frac{k+1}{2}]+\frac{\gamma}{2}\}=k-\frac{1}{2}$. Consequently, the induction hypothesis together with the fact that $M_{\frac{\g}{2}}(t)$ is uniformly bounded imply that
$$S_k(t) \leq C_k + A_k M_k(t)$$
with $A_k=(1-\beta_k(\a))(1+\frac{d}{2})$ and
$$C_k=\beta_k(\alpha)\sum^{[\frac{k+1}{2}]}_{j=1}\left(
\begin{array}{c}
k\\j
\end{array}
\right)\left(K_{j+\frac{\gamma}{2}} \;K_{k-j} +K_{j} \;K_{k-j+\frac{\gamma}{2}}\right).$$
Then, from \eqref{dMp1}:
$$\dfrac{\d}{\d t}M_k(t)+ (1-\beta_k(\a)) \, M_{k+\frac{\gamma}{2}}(t) \leq C_k + \left(A_k+\a k(1+\frac{d}{2})\right) M_k(t) + \frac{2\a\,k}{d} M_{1+\frac{\gamma}{2}}(t)M_k(t).$$
Now, from Theorem \ref{theoMom}, as soon as $\alpha \in (0,\alpha_0)$, $\sup_{t \geq 0}M_{1+\frac{\g}{2}}(t) < \infty$ and the above identity becomes
$$\dfrac{\d}{\d t}M_k(t)+ (1-\beta_k(\a)) \, M_{k+\frac{\gamma}{2}}(t) \leq C_k + B_k\,M_k(t)$$
for some explicit constant $B_k >0$. From Jensen's inequality, one has
$$M_{k+\frac{\gamma}{2}}(t) \geq \left(M_k(t)\right)^{1+\frac{\g}{2k}}$$
from which the above differential inequality yields the conclusion.

It only remains to check that \eqref{h_o_mom} holds for $k=3/2$. If $\gamma=1$, it directly follows from Theorem \ref{theoMom}. Otherwise, we have $\max\{k-\frac{1}{2}, [\frac{k+1}{2}]+\frac{\gamma}{2}\}=\max\{1, [\frac{5}{4}]+\frac{\gamma}{2}\}=1+\frac{\gamma}{2}$ and we deduce from Theorem \ref{theoMom} and usual interpolations that $$S_{3/2}(t) \leq C_{3/2} + A_{3/2} M_{3/2}(t),$$
for some constants $C_{3/2}>0$ and $A_{3/2}>0$, which leads, following the same lines as above, to the desired result.
\end{proof}

\subsection{Lower bounds}

We shall now use Lemma \ref{lemGPHI} to derive suitable  lower bounds for the moments of $\psi(t,\xi)$:
\begin{lem}\label{leminf} For any $\gamma \in (0,1]$, there exists $\alpha_\star \in (0,1)$ such that, for any $\alpha \in (0,\alpha_\star)$  any solution $\psi(t,\xi)$ to \eqref{BEscaled} satisfies
\begin{equation}\label{CAl}
\int_{\R^d}  \psi(t,\xi_*) |\xi_*|^\gamma \d\xi_* \geq \mathbf{C}_\alpha \int_{\R^d}  \psi_0(\xi_*) |\xi_*|^\gamma \d\xi_*.
\end{equation}
for some explicit constant  $\mathbf{C}_\alpha >0$ depending only on $\a,\g,d$ and $b(\cdot)$. Moreover, one has the following propagation of lower bounds
\begin{enumerate}[i)]
\item Assume that $\gamma=1$ and, given $0< \a <\a_\star$, let $0<\kappa(\alpha)\leq \sqrt{\left(\dfrac{\beta_{\frac{1}{2}}(\a)-1}{\beta_{\frac{1}{2}}(\a)+1}\right)\dfrac{d}{2}}$. If $M_{\frac{1}{2}}(0) \geq \kappa(\a)$ then $M_{\frac{1}{2}}(t) \geq \kappa(\a)$ for any $t \geq 0.$
\item Assume that $\gamma \in (0,1)$ and let $j_0 \in \mathbb{N}$ be such that $k_0=\frac{j_0\gamma}{2} < 1$ and $k_0+\frac{\gamma}{2} \geq 1.$ Given $0< \a <\a_\star$ let $(\kappa_j(\a))_{j=1,\ldots,j_0}$ be some positive constants such that
$$\kappa_{j_0}(\a) \leq \left( \frac{\beta_{\frac{j_0\gamma}{2}}(\alpha)-1}{\beta_{\frac{j_0\gamma}{2}}(\alpha)+1} \right)^{\frac{j_0}{1+j_0}}\left(\frac{d}{2}\right)^{\frac{j_0\gamma}{2}} \qquad \mbox{ and } \qquad
\kappa_j(\a)\leq \left( \frac{\beta_{\frac{j\gamma}{2}}(\alpha)-1}{\beta_{\frac{j\gamma}{2}}(\alpha)+1}\; \kappa_{j+1}(\alpha)\right)^\frac{j}{1+j},$$
for $j=1,\ldots,j_0-1.$
If the initial datum $\psi_0$ is such that $M_{\frac{j\gamma}{2}}(0) \geq \kappa_j(\a)$ for any $j=1,\ldots,j_0$ then $\inf_{t \geq 0} M_{\frac{j\gamma}{2}}(t) \geq \kappa_j(\a)$ for any $j=1,\ldots,j_0$.
\end{enumerate}
\end{lem}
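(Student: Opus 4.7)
The strategy is to apply Lemma \ref{lemGPHI} to the \emph{concave} choice $\Phi(x)=x^k$ with $k=j\gamma/2<1$. The proof of that lemma only uses the monotonicity in $t$ of $t\mapsto\Phi(x+ty)+\Phi(x-ty)$, which is reversed when $\Phi$ is concave; combined with $\lambda\leq 1$, one obtains the mirror inequality
$$\mathcal{G}_\Phi(\xi,\xi_*)\geq \dfrac{1}{2}\varrho_k E^k,\qquad E=|\xi|^2+|\xi_*|^2,$$
with $\varrho_k$ still given by \eqref{varrhoK}. Returning to the moment identity \eqref{dmk} for $M_k(t)$ and discarding the non-negative contributions $\alpha(1-k)\mathbf{a}_\psi(t)M_k(t)$ and $\tfrac{2\alpha k}{d}\mathbf{b}_\psi(t)M_k(t)$ (both non-negative since $k<1$), one arrives at
$$\dfrac{\d}{\d t}M_k(t)\geq \dfrac{1}{2}\int_{\R^d\times\R^d}\psi(t,\xi)\psi(t,\xi_*)|\xi-\xi_*|^\gamma\bigl[\beta_k(\alpha)E^k-(|\xi|^{2k}+|\xi_*|^{2k})\bigr]\d\xi\d\xi_*.$$

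For \textbf{part (i)} take $\gamma=1$ and $k=1/2$. Expanding the bracket against $\psi\psi_*|\xi-\xi_*|$ with the full $\xi\leftrightarrow\xi_*$ symmetry of the integrand, and using the conservations $M_0\equiv 1$, $M_1\equiv d/2$, the right-hand side reduces to a closed polynomial expression in $M_{1/2}(t)$ alone. The target is an autonomous scalar inequality of the form
$$\dfrac{\d}{\d t}M_{1/2}(t)\geq c\,\bigl[(\beta_{1/2}(\alpha)-1)\tfrac{d}{2}-(\beta_{1/2}(\alpha)+1)M_{1/2}(t)^2\bigr],\qquad c>0.$$
Since $\beta_{1/2}(0)=\varrho_{1/2}>1$, there is a unique $\alpha_\star\in(0,1)$ solving $\beta_{1/2}(\alpha_\star)=1$; for $\alpha<\alpha_\star$ the bracket above is non-negative whenever $M_{1/2}(t)^2\leq\tfrac{\beta_{1/2}-1}{\beta_{1/2}+1}\cdot \tfrac{d}{2}$, so any $\kappa(\alpha)$ below this equilibrium acts as a lower barrier by a standard continuity argument, yielding the propagation $M_{1/2}(0)\geq\kappa(\alpha)\Rightarrow M_{1/2}(t)\geq\kappa(\alpha)$ for all $t\geq 0$.

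For \textbf{part (ii)} the same derivation at level $k_j=j\gamma/2$ is \emph{no longer} closed: the gain contribution involves the higher moment $M_{k_{j+1}}=M_{(j+1)\gamma/2}$ through $E^{k_j}$ paired with $|\xi-\xi_*|^\gamma$, while the loss compares naturally with $M_{k_j}^{(j+1)/j}$ via the H\"older interpolation $M_{k_j}\leq M_0^{1-k_j/k_{j+1}}M_{k_{j+1}}^{k_j/k_{j+1}}=M_{k_{j+1}}^{j/(j+1)}$ (using $M_0\equiv 1$). The target takes the form
$$\dfrac{\d}{\d t}M_{k_j}(t)\geq c\,\bigl[(\beta_{k_j}(\alpha)-1)M_{k_{j+1}}(t)-(\beta_{k_j}(\alpha)+1)M_{k_j}(t)^{(j+1)/j}\bigr],$$
whose equilibrium gives $M_{k_j}^{(j+1)/j}=\tfrac{\beta_{k_j}-1}{\beta_{k_j}+1}M_{k_{j+1}}$. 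One then performs a finite downward induction starting at $j=j_0$: since $k_{j_0}+\gamma/2\geq 1$, Jensen's inequality applied to $x\mapsto x^{k_{j_0+1}}$ against the probability density $\psi(t,\cdot)$ yields $M_{k_{j_0+1}}(t)\geq M_1^{k_{j_0+1}}=(d/2)^{(j_0+1)\gamma/2}$, producing the stated form of $\kappa_{j_0}(\alpha)$; descending through $j=j_0-1,\dots,1$ with the previously established lower bound $M_{k_{j+1}}(t)\geq\kappa_{j+1}(\alpha)$ playing the role of $M_1$ at level $j$ produces the full recursion for $\kappa_j(\alpha)$.

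Finally, \eqref{CAl} follows by combining this propagation with the elementary H\"older upper bound $M_{\gamma/2}(0)\leq M_0(0)^{1-\gamma/2}M_1(0)^{\gamma/2}=(d/2)^{\gamma/2}$: choose $\kappa_1(\alpha)$ no larger than $(d/2)^{\gamma/2}$ and satisfying the hypothesis of (i) or (ii); if $M_{\gamma/2}(0)\leq\kappa_1(\alpha)$ one gets $M_{\gamma/2}(t)\geq M_{\gamma/2}(0)$ directly from the fact that the right-hand side of the ODE stays non-negative on the range where $M_{\gamma/2}$ is below $\kappa_1(\alpha)$; otherwise $M_{\gamma/2}(0)>\kappa_1(\alpha)$ and the propagation gives $M_{\gamma/2}(t)\geq\kappa_1(\alpha)\geq\kappa_1(\alpha)(d/2)^{-\gamma/2}M_{\gamma/2}(0)$, so $\mathbf{C}_\alpha:=\kappa_1(\alpha)(d/2)^{-\gamma/2}\in(0,1]$ is admissible. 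The main technical obstacle is the sharp extraction of the coefficient $\tfrac{\beta-1}{\beta+1}$: a crude use of the pointwise bound $E^k\leq|\xi|^{2k}+|\xi_*|^{2k}$ (valid for $k\leq 1$) would force the impossibly strong requirement $\beta_k(\alpha)>2$, which is never met since $\varrho_k\leq\varrho_0=2$. The correct form only emerges after carrying out the symmetric double integration against $\psi\psi_*$ explicitly, reducing everything to scalar identities in the constrained moments $M_0\equiv 1$ and $M_1\equiv d/2$ --- this is where both the ``$-1$'' in the numerator and the ``$+1$'' in the denominator naturally arise.
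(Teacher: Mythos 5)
Your opening step is fine and coincides with the paper's: applying Lemma \ref{lemGPHI} to the convex function $\Phi(x)=-x^{k}$ (equivalently, your reversed-monotonicity remark for the concave $x^{k}$, $0<k<1$) and discarding the nonnegative terms in \eqref{dmk} gives
\begin{equation*}
\dfrac{\d}{\d t}M_k(t)\;\geq\;\frac12\int_{\R^{2d}}\psi(t,\xi)\psi(t,\xi_*)\,|\xi-\xi_*|^\gamma\Big[\beta_k(\a)\big(|\xi|^2+|\xi_*|^2\big)^k-\big(|\xi|^{2k}+|\xi_*|^{2k}\big)\Big]\d\xi\,\d\xi_*,
\end{equation*}
and your barrier/comparison arguments and the downward induction in (ii) are the right scheme. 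The genuine gap is the passage from this double integral to the closed differential inequality $\frac{\d}{\d t}M_k\geq(\beta_k(\a)-1)M_{k+\gamma/2}-(\beta_k(\a)+1)M_{\gamma/2}M_k$ (the paper's \eqref{dMkt}): in both parts you only announce it (``the target is\dots'', ``the target takes the form\dots''), and the mechanism you propose for obtaining it cannot work. Because of the kernel $|\xi-\xi_*|^\gamma$, the right-hand side does \emph{not} ``reduce to a closed polynomial expression in $M_{1/2}(t)$ alone'' after symmetric integration: quantities such as $\int\psi\psi_*\,|\xi-\xi_*|\,(|\xi|^2+|\xi_*|^2)^{1/2}$ are not functions of the moments of $\psi$, and the constraints $M_0\equiv1$, $M_1\equiv d/2$ do not change that. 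You correctly note that the crude comparison of $E^k$ with $|\xi|^{2k}+|\xi_*|^{2k}$ would require $\beta_k(\a)>2$ and is therefore useless, but you never supply the substitute; that substitute is precisely the content of the lemma.

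The missing idea, which is how the paper proceeds, is a pair of \emph{pointwise} estimates used asymmetrically on the two parts of the bracket: for the positive part, $|\xi-\xi_*|^\gamma\geq\big|\,|\xi|^\gamma-|\xi_*|^\gamma\,\big|$ together with $\big(|\xi|^2+|\xi_*|^2\big)^k\geq\big|\,|\xi|^{2k}-|\xi_*|^{2k}\,\big|$ for $0<k<1$, so that the product is bounded below by the signed product $\big(|\xi|^\gamma-|\xi_*|^\gamma\big)\big(|\xi|^{2k}-|\xi_*|^{2k}\big)$; for the negative part, $|\xi-\xi_*|^\gamma\leq|\xi|^\gamma+|\xi_*|^\gamma$. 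Expanding, the diagonal terms $|\xi|^{2k+\gamma}+|\xi_*|^{2k+\gamma}$ carry the coefficient $\beta_k(\a)-1$ and the cross terms $|\xi|^\gamma|\xi_*|^{2k}+|\xi_*|^\gamma|\xi|^{2k}$ the coefficient $-(\beta_k(\a)+1)$; integrating against $\psi\psi_*$ yields \eqref{dMkt}, and this — not any exact reduction to constrained moments — is where the ``$-1$'' and ``$+1$'' come from. Once \eqref{dMkt} is in hand, your part (i), your induction in (ii) and your derivation of $\mathbf{C}_\alpha$ (you use $M_{\gamma/2}(0)\leq(d/2)^{\gamma/2}$; the paper uses $M_1(0)\geq M_{1/2}(0)^2$ when $\gamma=1$) go through essentially as in the paper, with $\alpha_\star$ fixed by $\beta_{k_0}(\alpha_\star)=1$. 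One further caveat: for $\gamma<1$ your argument for \eqref{CAl} invokes a barrier at the level $j=1$ whose gain term needs a lower bound on $M_{(j+1)\gamma/2}(t)$ for all times, i.e.\ the hypotheses of part (ii), which are not assumed in \eqref{CAl}; the paper instead runs the iteration top-down from $k_0=j_0\gamma/2$, where Jensen's inequality and \eqref{massenergie} give the needed lower bound $(d/2)^{k_0+\gamma/2}$ unconditionally, and descends to $k=\gamma/2$. As written, the heart of the lemma is missing from your proposal.
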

\begin{proof} We first prove \eqref{CAl}. We estimate the  moment  $M_k(t)$ for $k < 1$ applying the above Lemma \ref{lemGPHI}  to the convex function $\Phi(x)=-x^k$. We obtain easily that
\begin{multline*}
-\int_{\R^d}\mathbb{B}(\psi,\psi)(t,\xi)\,|\xi|^{2k}\d\xi \leq -\dfrac{\beta_k(\a)}{2}\int_{\R^{2d}}\psi(t,\xi)\psi(t,\xi_*)|\xi-\xi_*|^\gamma\,\left(|\xi|^2+|\xi_*|^2\right)^k\d\xi\d\xi_*\\
+\dfrac{1}{2}\int_{\R^{2d}}\psi(t,\xi)\psi(t,\xi_*)|\xi-\xi_*|^\gamma\,\left(|\xi|^{2k}+|\xi_*|^{2k}\right)\d\xi\d\xi_*\end{multline*}
where, as in Lemma \ref{lemMk}, $\beta_k(\a)=(1-\a)\varrho_k$ with $\varrho_k$ given by
 \begin{equation*}
\varrho_k=\sup_{\hat{U}, \hat{u} \in \S^{d-1}}\int_{\S^{d-1}}\left[ \left(\dfrac{1+ \hat{U} \cdot \sigma}{2}\right)^k+\left(\dfrac{1- \hat{U} \cdot \sigma}{2}\right)^k\right]b(\hat{u} \cdot \sigma)\d\sigma \qquad \forall 0 < k < 1.
\end{equation*} Using the fact that  $k-1 < 0$, $\mathbf{a}_\psi(t) \geq 0$ and $\mathbf{b}_\psi(t) \geq 0$, we deduce from \eqref{dmk} that
\begin{equation*}\dfrac{\d}{\d t}M_k(t)\geq \frac{1}{2}\int_{\R^{2d}}\psi(t,\xi)\psi(t,\xi_*) \mathcal{J}_k(\xi,\xi_*)\d\xi\d\xi_*
\end{equation*}
where
$$\mathcal{J}_k(\xi,\xi_*)=\beta_k(\a)|\xi-\xi_*|^\gamma\,\left(|\xi|^2+|\xi_*|^2\right)^k -|\xi-\xi_*|^\gamma\,\left(|\xi|^{2k}+|\xi_*|^{2k}\right).$$
Since $\gamma \in (0,1]$, one has $|\,|\xi|^\gamma-|\xi_*|^\gamma\,| \leq |\xi-\xi_*|^\gamma\leq |\xi|^\gamma+|\xi_*|^\gamma$ while $$\left(|\xi|^2+|\xi_*|^2\right)^k \geq \left|\,|\xi|^{2k}-|\xi_*|^{2k}\,\right| \qquad \forall k \in (0,1).$$
As a consequence,
\begin{multline*}
\mathcal{J}_k(\xi,\xi_*) \geq \beta_k(\a)\left(\,|\xi|^\gamma-|\xi_*|^\gamma\,\right)\left(\,|\xi|^{2k}-|\xi_*|^{2k}\,\right) -\left(|\xi|^\gamma+|\xi_*|^\gamma\right)\,\left(|\xi|^{2k}+|\xi_*|^{2k}\right)\\
=\left(\beta_k(\a)-1\right)\left(|\xi|^{\gamma+2k}+|\xi_*|^{\gamma+2k}\right)-\left(\beta_k(\a)+1\right)\left(|\xi|^\gamma\,|\xi_*|^{2k}+|\xi_*|^\gamma\,|\xi|^{2k}\right).
\end{multline*}
yielding the following inequality, for any $0 < k < 1$:
\begin{equation}\label{dMkt}\dfrac{\d}{\d t}M_k(t) \geq \left(\beta_k(\a)-1\right)M_{k+\frac{\gamma}{2}}(t)-\left(\beta_k(\a)+1\right)M_{\frac{\gamma}{2}}(t)M_{k}(t).\end{equation}
We are now in position to resume the argument of \cite[Lemma 2]{GaPaVi2} to get \eqref{CAl}.  We recall here the main steps in order to explicit the parameter $\alpha_\star$ (and, for $\g=1$, the constant $\mathbf{C}_\alpha$). Assume first that $\gamma=1$, using then \eqref{dMkt} with $k=\frac{1}{2}$, we get
$$\dfrac{\d}{\d t}M_{\frac{1}{2}}(t) \geq \left(\beta_{\frac{1}{2}}(\a)-1\right)M_{1}(t)-\left(\beta_{\frac{1}{2}}(\a)+1\right)M_{\frac{1}{2}}(t)^2.$$
Since $M_1(t)=M_1(0)=d/2$ for any $t \geq 0$, we see that, if $\beta_{\frac{1}{2}}(\a) -1 >0 $ then
 \begin{equation}\label{lowerM12}
 M_{\frac{1}{2}}(t) \geq \min\left(M_{\frac{1}{2}}(0), \sqrt{\dfrac{\beta_{\frac{1}{2}}(\a)-1}{\beta_{\frac{1}{2}}(\a)+1}M_1(0)}\right) \qquad \forall t \geq 0.\end{equation}
Since moreover $M_1(0) \geq M_{\frac{1}{2}}(0)^2$ we obtain
$$M_{\frac{1}{2}}(t) \geq \mathbf{C}_\alpha M_{\frac{1}{2}}(0) \qquad \forall 0 < \alpha < \alpha_\star:=\dfrac{\varrho_{\frac{1}{2}}-1}{\varrho_{\frac{1}{2}}}$$
where $\mathbf{C}_\alpha=\sqrt{\dfrac{\beta_{\frac{1}{2}}(\a)-1}{\beta_{\frac{1}{2}}(\a)+1}}$ (notice that $0 < \alpha < \alpha_\star \Longleftrightarrow \beta_{\frac{1}{2}}(\a) >1$). In other words, for any $0 < \alpha < \alpha_\star$,
$$\int_{\R^d}|\xi|\psi(t,\xi)\d\xi \geq \mathbf{C}_\alpha\int_{\R^d}|\xi|\psi_0(\xi)\d\xi \qquad \forall t \geq 0.$$
For $\gamma < 1$, one argues by induction as in \cite[Lemma 2]{GaPaVi2} iterating the above argument with $k=\frac{j\gamma}{2}$ for $j=1,\ldots,j_0$ where $j_0 \in \mathbb{N}$ is such that $k_0=\frac{j_0\gamma}{2} < 1$ and $k_0+\frac{\gamma}{2} \geq 1.$ Then, from \eqref{dMkt} with $k=k_0$, we get
\begin{equation*}
\dfrac{\d}{\d t}M_{k_0}(t) \geq \left(\beta_{k_0}(\a)-1\right)M_{k_0+\frac{\gamma}{2}}(t)-\left(\beta_{k_0}(\a)+1\right)M_{\frac{\gamma}{2}}(t)M_{k_0}(t).\end{equation*}
A simple use of Jensen's inequality shows that
$$\dfrac{\d}{\d t}M_{k_0}(t) \geq \left(\beta_{k_0}(\a)-1\right)\left(\frac{d}{2}\right)^{k_0+\frac{\gamma}{2}}-\left(\beta_{k_0}(\a)+1\right)M_{k_0}(t)^{1+\frac{\gamma}{2k_0}}$$
from which we deduce, as above, that
$$M_{k_0}(t) \geq \left(\frac{\beta_{k_0}(\alpha)-1}{\beta_{k_0}(\a)+1}\right)^{\frac{1}{1+\frac{\gamma}{2k_0}}}M_{k_0}(0) \qquad \forall t \geq 0$$
if $\beta_{k_0}(\alpha)>1.$ Now, one can repeat the argument exactly  with $k_1=k_0-\frac{\gamma}{2}$, $k_2=k_1-\frac{\gamma}{2}$ and so on. Notice that, if $\beta_{k_0}(\alpha) >1$, then $\beta_{k}(\a) > 1$ for any $k \leq k_0$. In particular, we get \eqref{CAl} for any $0 < \alpha < \frac{\varrho_{k_0}-1}{\varrho_{k_0}}=:\alpha_\star.$

Let us now prove the second part of the lemma, regarding the propagation of lower bounds. The proof in the case $\gamma=1$ is a direct consequence of \eqref{lowerM12}. For $0 < \gamma < 1$, the proof uses arguments similar to those used in the proof of \eqref{CAl}. Precisely, since  $M_{\frac{\gamma}{2}}(t) \leq M_{\frac{j\gamma}{2}}(t)^{\frac{1}{j}}$ according to  Jensen's inequality, one deduces from Eq. \eqref{dMkt} that
\begin{equation*}
\dfrac{\d}{\d t}M_{\frac{j\gamma}{2}}(t) \geq \left(\beta_{\frac{j\gamma}{2}}(\a)-1\right)M_{\frac{(j+1)\gamma}{2}}(t)-\left(\beta_{\frac{j\gamma}{2}}(\a)+1\right)M_{\frac{j\gamma}{2}}(t)^\frac{1+j}{j} , \qquad \text{ for any  } j=1,\ldots,j_0.\end{equation*}
According to Jensen's inequality one also has $$M_{\frac{(j_0+1)\gamma}{2}}(t)\geq M_1(t)^{\frac{(j_0+1)\gamma}{2}}=\left(\frac{d}{2}\right)^\frac{(j_0+1)\gamma}{2} \qquad \forall t \geq 0$$
 and, by a simple decreasing induction argument, one checks that if $M_{\frac{j\gamma}{2}}(0) \geq \kappa_j(\a)$ holds for any $j=1,\ldots,j_0$, then $\inf_{t \geq 0} M_{\frac{j\gamma}{2}}(t) \geq \kappa_j(\a)$ will hold  for any $j=1,\ldots,j_0.$
\end{proof}
\begin{rmq}\label{Cgamma} With the notations of Lemma \ref{leminf}, we define the set $\mathcal{C}_\gamma(\alpha)$ $(0 < \a < \a_\star)$ as follows:
\begin{enumerate}[(i)\:]
\item If $\gamma=1$ then $\mathcal{C}_1(\alpha)$ is the set of nonnegative $\psi(\xi)$ such that $\int_{\R^d} \psi(\xi)|\xi|\d\xi  \geq \kappa(\alpha)$.
\item If $\gamma \in (0,1)$ let $j_0 \in \mathbb{N}$ be such that $k_0=\frac{j_0\gamma}{2} < 1$ and $k_0+\frac{\gamma}{2} \geq 1.$ Then,
$\mathcal{C}_\gamma(\alpha)$ is defined as the set of nonnegative $\psi(\xi)$ such that $\int_{\R^d}\psi(\xi)|\xi|^{j\gamma} \d\xi \geq \kappa_j(\alpha)$ for any $j=1,\ldots,j_0.$\end{enumerate}
The second part of Lemma \ref{leminf} can be reformulated as follows: given $\gamma \in (0,1]$ and $0 < \alpha < \alpha_\star$, if the initial datum $\psi_0 \in \mathcal{C}_\gamma(\alpha)$ then the associated solution $\psi(t)$ to \eqref{BEscaled} is such that $\psi(t) \in \mathcal{C}_\gamma(\alpha)$ for any $t \geq 0.$%
\end{rmq}

The above lower bounds have several important consequences when dealing with isotropic functions. Precisely, one has the following result, already stated in \cite[Lemma 10]{Lu00} in dimension $d=3$:
\begin{lem}\label{lem:LU} Assume that $f(\xi)=\overline{f}(|\xi|) \geq 0$ is an isotropic integrable function and let $k(r) \geq 0$ be a nondecreasing mapping on $[0,\infty).$ Then, for any $\xi \in \R^d$,
$$\int_{\R^d} f(\xi_*)k\left(|\xi-\xi_*|\right)\d\xi_* \geq \frac{1}{2}\int_{\R^d} f(\xi_*)k\left(\sqrt{|\xi|^2+|\xi_*|^2}\right)\d\xi_*.$$
\end{lem}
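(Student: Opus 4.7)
The plan is to use the isotropy of $f$ to reduce the inequality to a purely angular one on $\S^{d-1}$, and then exploit the monotonicity of $k$ on a well-chosen hemisphere.

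First, writing $\xi_*=r\omega$ with $r=|\xi_*|\geq 0$ and $\omega \in \S^{d-1}$, the assumption $f(\xi_*)=\overline{f}(|\xi_*|)$ gives
\begin{equation*}
\int_{\R^d} f(\xi_*)\,k(|\xi-\xi_*|)\,\d\xi_* = \int_0^\infty r^{d-1}\overline{f}(r)\left(\int_{\S^{d-1}} k(|\xi-r\omega|)\,\d\omega\right)\d r,
\end{equation*}
and similarly
\begin{equation*}
\int_{\R^d} f(\xi_*)\,k\!\left(\sqrt{|\xi|^2+|\xi_*|^2}\right)\d\xi_* = |\S^{d-1}|\int_0^\infty r^{d-1}\overline{f}(r)\,k\!\left(\sqrt{|\xi|^2+r^2}\right)\d r,
\end{equation*}
since the integrand of the second expression depends only on $|\xi_*|$. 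Consequently, it suffices to establish, for every $\xi \in \R^d$ and every $r\geq 0$, the pointwise angular bound
\begin{equation*}
\int_{\S^{d-1}} k(|\xi-r\omega|)\,\d\omega \;\geq\; \frac{1}{2}\,|\S^{d-1}|\,k\!\left(\sqrt{|\xi|^2+r^2}\right).
\end{equation*}

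To prove this last inequality, I use the identity $|\xi-r\omega|^2 = |\xi|^2+r^2-2r\langle \xi,\omega\rangle$. Define the hemisphere
$\S^-_\xi := \{\omega \in \S^{d-1}\,:\,\langle \xi,\omega\rangle \leq 0\}$, which has surface measure exactly $\frac{1}{2}|\S^{d-1}|$. On $\S^-_\xi$ one has $|\xi-r\omega|\geq \sqrt{|\xi|^2+r^2}$, and since $k$ is non-decreasing and non-negative,
\begin{equation*}
\int_{\S^{d-1}} k(|\xi-r\omega|)\,\d\omega \;\geq\; \int_{\S^-_\xi} k(|\xi-r\omega|)\,\d\omega \;\geq\; k\!\left(\sqrt{|\xi|^2+r^2}\right)\cdot\frac{1}{2}|\S^{d-1}|,
\end{equation*}
which is exactly what is needed. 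Multiplying by $r^{d-1}\overline{f}(r)\geq 0$ and integrating in $r$ then yields the claim.

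There is essentially no obstacle here: the whole argument is a one-line geometric observation (the sign of $\langle \xi,\omega\rangle$) combined with the radial decomposition permitted by isotropy. The only point requiring care is recognizing that the factor $\tfrac{1}{2}$ on the right-hand side corresponds precisely to the measure of one hemisphere, which is what makes the monotonicity of $k$ yield a sharp enough estimate without losing additional constants.
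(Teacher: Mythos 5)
Your proof is correct and is essentially the same as the paper's: both reduce to spherical coordinates using the isotropy of $f$, restrict the angular integration to the hemisphere where $\langle \xi,\omega\rangle\leq 0$ (on which $|\xi-r\omega|\geq \sqrt{|\xi|^2+r^2}$), and invoke the monotonicity of $k$, with the factor $\tfrac12$ coming from the measure of that half-sphere. No further comment is needed.
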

\begin{proof}  We give an elementary proof of this result. Using spherical coordinates, with $\xi_*=\varrho\omega$ and $\xi=r\sigma$, $r,\varrho >0$, $\omega,\sigma \in \mathbb{S}^{d-1}$, one has
\begin{equation*}\begin{split}
\int_{\R^d} f(\xi_*)k\left(|\xi-\xi_*|\right)\d\xi_*&=\int_0^\infty \overline{f}(\varrho)\varrho^{d-1}\d\varrho\int_{\mathbb{S}^{d-1}} k\left(\sqrt{\varrho^2+r^2-2r\,\varrho\,  \sigma \cdot \omega}\right)\d\omega\\
&\geq \int_0^\infty \overline{f}(\varrho)\varrho^{d-1}\d\varrho\int_{\mathbb{S}^{d-1}_-} k\left(\sqrt{\varrho^2+r^2-2r\,\varrho\,  \sigma \cdot \omega}\right)\d\omega
\end{split}\end{equation*}
where $\mathbb{S}^{d-1}_-=\left\{\omega \in \mathbb{S}^{d-1}\,;\,\sigma \cdot \omega < 0\right\}.$ Then, for any $\omega \in \mathbb{S}^{d-1}_-$, since $k(\cdot)$ is nondecreasing,
$$k\left(\sqrt{\varrho^2+r^2-2r\,\varrho\,  \sigma \cdot \omega}\right)\geq k\left(\sqrt{\varrho^2+r^2}\right)$$
and
$$\int_{\R^d} f(\xi_*)k\left(|\xi-\xi_*|\right)\d\xi_*\geq \int_0^\infty \overline{f}(\varrho)\varrho^{d-1}k\left(\sqrt{\varrho^2+r^2}\right)\d\varrho\int_{\mathbb{S}^{d-1}_-}\d\omega$$
which, turning back to the original variables yields the conclusion, the factor $\frac{1}{2}$ coming from the integration over the half-sphere $\mathbb{S}^{d-1}_-.$
\end{proof}

Thanks to the above lemma, one can complement Lemma \ref{leminf} for isotropic solutions. We first recall that, if $\psi_0(\xi)=\overline{\psi_0}(|\xi|)$ is an isotropic function, then the solution $\psi$ to \eqref{BEscaled} with initial condition $\psi_0$ is isotropic for any $t \geq 0.$ Indeed, for any rotation matrix $R \in SO(d)$, defining $\tilde{\psi}$ by $\tilde{\psi}(t,\xi)=\psi(t,R\cdot \xi)$ for any $(t,\xi)\in (0,\infty)\times \R^d$, we have
$$\Q_-( \tilde{\psi},\tilde{\psi})(t,\xi)=\Q_-(\psi,\psi)(t,R\cdot\xi), \qquad
\Q_+( \tilde{\psi},\tilde{\psi})(t,\xi)=\Q_+(\psi,\psi)(t,R\cdot\xi),$$
for any $(t,\xi)\in (0,\infty)\times \R^d$.
Consequently, one checks easily that $\tilde{\psi}$ is a solution to \eqref{BEscaled} with initial condition $\psi_0$. By uniqueness, we deduce that $\tilde{\psi}=\psi$. Thus, $\psi$ is an isotropic function. This leads to

\begin{lem}\label{lem:ka} Assume that $\psi_0(\xi)=\overline{\psi_0}(|\xi|)$ is a nonnegative isotropic initial datum satisfying \eqref{massenergie} and \eqref{hypini}. For any $\gamma \in (0,1]$, there exists $\alpha_\star \in (0,1)$ such that, for any $\alpha \in (0,\alpha_\star)$ the solution $\psi(t,\xi)$ to \eqref{BEscaled} satisfies
$$\int_{\R^d} \psi(t,\xi_*)|\xi-\xi_*|^\gamma \d\xi_*\geq \mu_\alpha \langle \xi \rangle ^\gamma, \qquad \forall \xi \in \R^d,\:\:t \geq 0$$
for some positive constant $\mu_\alpha >0$ depending on $b(\cdot)$, $\gamma,d,\alpha$ and on the initial datum $\psi_0$.
\end{lem}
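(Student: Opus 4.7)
The plan is to combine Lemma \ref{lem:LU} with the moment lower bound \eqref{CAl} from Lemma \ref{leminf}. Since isotropy is propagated by \eqref{BEscaled}, the solution $\psi(t,\cdot)$ remains isotropic for every $t\geq 0$, so we may apply Lemma \ref{lem:LU} to $f=\psi(t,\cdot)$ with the non-decreasing kernel $k(r)=r^\gamma$. This gives, for every $\xi\in\R^d$,
$$\int_{\R^d}\psi(t,\xi_*)\,|\xi-\xi_*|^\gamma\,\d\xi_*\geq \frac{1}{2}\int_{\R^d}\psi(t,\xi_*)\,\bigl(|\xi|^2+|\xi_*|^2\bigr)^{\gamma/2}\d\xi_*.$$

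The next step is the elementary inequality
$$(a+b)^p\geq \max(a^p,b^p)\geq \tfrac{1}{2}\bigl(a^p+b^p\bigr),\qquad a,b\geq 0,\ p\in(0,1],$$
applied with $p=\gamma/2\in(0,1/2]$, $a=|\xi|^2$, $b=|\xi_*|^2$. Combined with the normalization $\int\psi(t,\xi_*)\d\xi_*=1$ from \eqref{massenergie} (which is preserved, by Theorem \ref{well_posedness}), this yields
$$\int_{\R^d}\psi(t,\xi_*)\,|\xi-\xi_*|^\gamma\,\d\xi_*\geq \frac{1}{4}\,|\xi|^\gamma+\frac{1}{4}\,M_{\gamma/2}(t),$$
where $M_{\gamma/2}(t)=\int_{\R^d}\psi(t,\xi_*)|\xi_*|^\gamma\,\d\xi_*$.

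Now invoke Lemma \ref{leminf}, which is valid under exactly the same smallness assumption $\alpha\in(0,\alpha_\star)$: inequality \eqref{CAl} provides
$$M_{\gamma/2}(t)\geq \mathbf{C}_\alpha\,M_{\gamma/2}(0)\qquad\forall t\geq 0.$$
Note that $M_{\gamma/2}(0)>0$: since $\psi_0\in L^p$ with $p>1$ is a genuine function (not a Dirac mass) satisfying $\int\psi_0|\xi|^2\d\xi=d/2>0$, the integral $\int\psi_0(\xi_*)|\xi_*|^\gamma\d\xi_*$ cannot vanish. Finally, the sub-additivity bound $\langle\xi\rangle^\gamma=(1+|\xi|^2)^{\gamma/2}\leq 1+|\xi|^\gamma$ allows us to set
$$\mu_\alpha:=\frac{1}{4}\min\bigl\{1,\mathbf{C}_\alpha\,M_{\gamma/2}(0)\bigr\}>0,$$
which depends only on $\gamma,d,\alpha,b(\cdot)$ and on $\int_{\R^d}\psi_0(\xi)|\xi|^\gamma\d\xi$, and we conclude that
$$\int_{\R^d}\psi(t,\xi_*)\,|\xi-\xi_*|^\gamma\,\d\xi_*\geq \mu_\alpha\,\langle\xi\rangle^\gamma\qquad\forall\xi\in\R^d,\ t\geq 0.$$

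There is no serious obstacle here: all the machinery has been built in Lemmas \ref{leminf} and \ref{lem:LU}, and the argument is essentially a clean assembly. The one point that deserves care is the verification that isotropy is indeed propagated along \eqref{BEscaled} so that Lemma \ref{lem:LU} applies — this follows from the uniqueness statement in Proposition \ref{propostab} together with the rotational invariance of the operators $\Q_\pm$ and of the drift $\xi\cdot\nabla_\xi$, since a rotation of any solution yields another solution with the same (rotated, hence equal) initial datum.
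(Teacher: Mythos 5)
Your argument is correct and follows essentially the same route as the paper: Lemma \ref{lem:LU} applied with $k(r)=r^\gamma$, the elementary lower bound $(|\xi|^2+|\xi_*|^2)^{\gamma/2}\geq c_\gamma(|\xi|^\gamma+|\xi_*|^\gamma)$ (you make $c_\gamma=\tfrac12$ explicit), the lower bound \eqref{CAl} from Lemma \ref{leminf} under the same restriction $\alpha\in(0,\alpha_\star)$, and the comparison $1+|\xi|^\gamma\geq\langle\xi\rangle^\gamma$, together with the propagation of isotropy. The only differences are cosmetic (explicit constants and the uniqueness-plus-rotation justification of isotropy, which the paper simply recalls), so nothing further is needed.
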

\begin{proof} Applying Lemma \ref{lem:LU} with the function $k(x)=x^\gamma$ we get that
$$\int_{\R^d} \psi(t,\xi_*) |\xi-\xi_*|^\g \d\xi_* \geq \frac{1}{2}\int_{\R^d} \psi(t,\xi_*)\left(|\xi|^2+|\xi_*|^2\right)^{\frac{\g}{2}}\d\xi_*.$$
Moreover, for any $\gamma \in (0,1]$, there exists $c_\gamma >0$ such that $\left(|\xi|^2+|\xi_*|^2\right)^{\frac{\gamma}{2}}\geq c_\gamma (|\xi|^\gamma+|\xi_*|^\gamma)$ for any $\xi,\xi_* \in \R^d$. Then,
$$\int_{\R^d} \psi(t,\xi_*) |\xi-\xi_*|^\gamma \d\xi_* \geq  \dfrac{c_\gamma}{2} \left(|\xi|^\gamma + \int_{\R^d}  \psi(t,\xi_*) |\xi_*|^\gamma \d\xi_*\right).$$
Now, according to Lemma \ref{leminf}, whenever $\alpha \in (0,\alpha_\star)$ there exists $\mathbf{C}_\alpha$ such that
$$\int_{\R^d}  \psi(t,\xi_*) |\xi_*|^\gamma \d\xi_* \geq \mathbf{C}_\alpha \int_{\R^d}  \psi_0(\xi_*) |\xi_*|^\gamma \d\xi_*, \qquad t\geq 0.$$
Consequently,
$$\int_{\R^d} \psi(t,\xi_*) |\xi-\xi_*|^\gamma \d\xi_*   \geq   \frac{c_\gamma}{2} \min\left\{1, \mathbf{C}_\alpha \int_{\R^d}  \psi_0(\xi_*) |\xi_*|^\gamma \d\xi_*\right\}(1+|\xi|^\gamma) \qquad \forall \xi \in \R^d, \quad t \geq 0.$$
Now, since there exists  $\kappa_\gamma >0$ such that $(1+|\xi|^\gamma)\geq \kappa_\gamma \left(1+|\xi|^2\right)^{\frac{\gamma}{2}}$ for any $\xi \in \R^d$, we finally obtain the conclusion with $\mu_\alpha=\frac{c_\gamma\,\kappa_\gamma}{2} \min\left(1, \mathbf{C}_\alpha \int_{\R^d}  \psi_0(\xi_*) |\xi_*|^\gamma \d\xi_*\right).$
\end{proof}
\begin{rmq}\label{rmqastar} The parameter $\alpha_\star$ is exactly the one of Lemma \ref{leminf}. Precisely, $$\alpha_\star=\frac{\varrho_{k_0}-1}{\varrho_{k_0}}$$ where $k_0=\frac{j_0\gamma}{2} < 1$ with $j_0 \in \mathbb{N}$ such that $k_0 < 1$ and $k_0+\frac{\gamma}{2} \geq 1.$ In particular, for $\gamma=1$, $k_0=\frac{1}{2}$ and, in dimension $d=3$ and hard spheres interactions $b(\cdot)=\frac{1}{4\pi}$, one sees that $\alpha_\star=\frac{1}{4}.$
\end{rmq}

\section{$L^p$-estimates}\label{sec_LP}

We are now interested in uniform in time propagation of $L^p$-norms for the solution to \eqref{BEscaled} and we prove Theorem \ref{theoLp}.
As in the previous section,  we fix a nonnegative initial distribution $\psi_0$ satisfying \eqref{massenergie} and \eqref{hypini} and such that
$$\psi_0\in L^1_{2+\gamma}(\R^d)\cap L^p(\R^d)$$
for some \textit{fixed} $p >1$ and we let then $\psi
\in\C([0,\infty);w-L^1(\R^d)) \cap L^\infty_{\mathrm{loc}}((0,\infty),L^1_{2+\gamma}(\R^d))$
be a nonnegative solution to \eqref{BEscaled} with $\psi(0,\cdot)=\psi_0$.
\textit{\textbf{We assume in this section that $\psi_0$ is an isotropic function,}} that is \eqref{HYP} holds. For a given $p >1$, multiplying \eqref{BEscaled} by $p\psi(t,\xi)^{p-1}$ and integrating over $\R^d$, we get
\begin{equation}\label{dLp}\begin{split}
\dfrac{\d}{\d t}\|\psi(t)\|_{L^p}^p &+ \left(p\mathbf{A}_\psi(t)-d\mathbf{B}_\psi(t)\right)\|\psi(t)\|_{L^p}^p\\
&=p(1-\alpha)\int_{\R^d}\Q_+(\psi,\psi)(t,\xi)\psi(t,\xi)^{p-1}\d \xi -p\int_{\R^d}\Q_-(\psi,\psi)(t,\xi)\psi(t,\xi)^{p-1}\d \xi\\
&=:(1-\alpha)p\,\mathbb{G}_p(\psi(t)) -p\mathbb{L}_p(\psi(t)) \end{split}\end{equation}
where we set
$$\mathbb{G}_p(\psi(t))=\int_{\R^d}\Q_+(\psi,\psi)(t,\xi)\psi(t,\xi)^{p-1}\d \xi,$$
and
$$\mathbb{L}_p(\psi(t))=\int_{\R^d}\Q_-(\psi,\psi)(t,\xi)\psi(t,\xi)^{p-1}\d \xi.$$
The estimates for $\mathbb{G}_p(\psi(t))$ are well-known \cite{MouhVill04,AloGa} and, for $\varepsilon >0$, there exists some (explicit)   $\theta \in (0,1)$ and  $C_\varepsilon >0$ such that
$$\mathbb{G}_p(\psi(t)) \leq C_\varepsilon \|\psi(t)\|^{1+p\theta}_{L^1}\,\|\psi(t)\|_{L^p}^{p-p\theta} + \varepsilon \|\psi(t)\|_{L^1_2}\,\|\psi(t)\|_{L^p_{\frac{\gamma}{p}}}^p,$$
i.e.
\begin{equation}\label{GpPsi}\mathbb{G}_p(\psi(t)) \leq C_\varepsilon  \|\psi(t)\|_{L^p}^{p-p\theta} + \varepsilon \left(1+\frac{d}{2}\right)\|\psi(t)\|_{L^p_{\frac{\gamma}{p}}}^p
.\end{equation}
Now, all the strategy consists in finding conditions on $\alpha$ and $p >1$ ensuring that
$$-\left(p\mathbf{A}_\psi(t)-d\mathbf{B}_\psi(t)\right)\|\psi(t)\|_{L^p}^p -p\mathbb{L}_p(\psi(t))$$
can absorb the leading order term $\varepsilon (1-\alpha)p\left(1+\frac{d}{2}\right)\|\psi(t)\|_{L^p_{\frac{\gamma}{p}}}^p$. One has
$$\left(p\mathbf{A}_\psi(t)-d\mathbf{B}_\psi(t)\right)=-\dfrac{\alpha}{2}\left(d(p-1)+2p\right)\mathbf{a}_\psi(t)
+ \alpha(p-1)\mathbf{b}_\psi(t)$$
and, since $\mathbf{b}_\psi(t) \geq 0$, it is enough to estimate
$$\mathbb{K}_p:=\dfrac{\alpha}{2}\left(d(p-1)+2p\right)\mathbf{a}_\psi(t)\,\|\psi(t)\|_{L^p}^p -p\mathbb{L}_p(\psi(t)).$$
Compounding $\|\psi(t)\|_{L^p}^p$ and $\mathbf{a}_\psi(t)$ into a unique integral, we get
$$ \mathbf{a}_\psi(t)\,\|\psi(t)\|_{L^p}^p= \int_{\R^{3d}}|\xi-\xi_*|^\g \psi(t,\xi)\psi(t,\xi_*)\psi(t,z)^p\d z \d \xi\d\xi_*.$$
One has $|\xi-\xi_*|^\gamma \leq   |z-\xi|^\gamma + |z-\xi_*|^\gamma $ so that
\begin{multline*}
 \mathbf{a}_\psi(t)\,\|\psi(t)\|_{L^p}^p \leq  \int_{\R^{3d}}|z-\xi|^\gamma \psi(t,\xi)\psi(t,\xi_*)\psi(t,z)^p\d z \d \xi\d\xi_*\\
 +   \int_{\R^{3d}}|z-\xi_*|^\gamma \psi(t,\xi)\psi(t,\xi_*)\psi(t,z)^p\d z \d \xi\d\xi_*\end{multline*}
i.e.
$$ \mathbf{a}_\psi(t)\,\|\psi(t)\|_{L^p}^p \leq
2\int_{\R^{2d}}|z-\xi|^\gamma \psi(t,\xi)\psi(t,z)^p\d z \d \xi =2\mathbb{L}_p(\psi(t)).$$
One sees then that $\mathbb{K}_p \leq -\eta_p \mathbb{L}_p(\psi(t))$ with $\eta_p=p-2\a\,p-\a\,d(p-1)$ and
$$\eta_p >0 \Longleftrightarrow  p(\alpha d +2\alpha -1) < \alpha d.$$
One can distinguish between two cases:
\begin{enumerate}[(i)]
\item if $\alpha \leq \frac{1}{d+2}$ then one has $\eta_p \geq \alpha d >0$ for any $p >1;$
\item if  $\alpha > \frac{1}{d+2}$ then $\eta_p > 0$ if and only if $p < p_\alpha^\star$ where $p_\alpha^\star=\frac{\alpha d}{\alpha d + 2\alpha-1}$. Notice that $p_\alpha^\star >1$ if and only if $0  <\alpha < \frac{1}{2}.$
\end{enumerate}
In other words, for any $\alpha < \frac{1}{2},$ there exists $p_\alpha^\star >1$ such that
\begin{equation}\label{Kpa}
\mathbb{K}_p \leq -\eta_p\mathbb{L}_p(\psi(t)) \qquad \text{ with } \eta_p >0 \qquad \forall p \in (1,p_\alpha^\star).\end{equation}
Putting together \eqref{dLp}, \eqref{GpPsi} and \eqref{Kpa} we get, for $\alpha < \frac{1}{2}$ and $p \in (1,p_\alpha^\star)$:
\begin{equation*}
\dfrac{\d}{\d t}\|\psi(t)\|_{L^p}^p \leq C_\varepsilon (1-\a)p \|\psi(t)\|_{L^p}^{p-p\theta}
+ \varepsilon (1-\a)p \left(1+\frac{d}{2}\right)\|\psi(t)\|_{L^p_{\frac{\gamma}{p}}}^p-\eta_p \mathbb{L}_p(\psi(t)).
\end{equation*}
It remains now to compare $\mathbb{L}_p(\psi(t))$ to $\|\psi(t)\|_{L^p_{\frac{\gamma}{p}}}^p$. This is the only point where we shall invoke our assumption \eqref{HYP}.  Precisely, from \eqref{HYP} and Lemma \ref{lem:ka}, if $\alpha \in (0,\alpha_\star)$ there exists $\mu_\alpha >0$ depending on $\psi_0$ such that $$\int_{\R^d}|\xi-\xi_*|^\gamma \psi(t,\xi_*)\d\xi_* \geq \mu_\alpha \langle \xi \rangle ^\gamma \qquad \forall t \geq 0, \qquad \forall \xi \in \R^d.$$
Therefore,
\begin{equation}\label{infLp}\mathbb{L}_p(\psi(t)) \geq \mu_\alpha \int_{\R^d} \psi(t,\xi)^p \langle \xi \rangle^\g \d\xi =\mu_\alpha \|\psi(t)\|_{L^p_{\frac{\gamma}{p}}}^p.\end{equation}
Then, for any fixed $0 < \alpha < \min(\frac{1}{2},\alpha_\star)$ and fixed $p \in (1,p_\a^\star)$, one can choose  $\varepsilon >0$ such that $\varepsilon (1-\a)p \left(1+\frac{d}{2}\right) =\frac{\eta_p\,\mu_\alpha}{2}$ to get the following
\begin{equation*}
\dfrac{\d}{\d t}\|\psi(t)\|_{L^p}^p \leq K  \|\psi(t)\|_{L^p}^{p-p\theta}
-\frac{\eta_p\,\mu_\alpha}{2}\|\psi(t)\|_{L^p_{\frac{\gamma}{p}}}^p,
\end{equation*}
for some positive constant $K >0$. This implies clearly that
$$\sup_{t \geq 0}\|\psi(t)\|_{L^p} \leq \max\left\{\|\psi_0\|_{L^p},\left(\frac{2K}{\eta_p\mu_\alpha}\right)^{\frac{1}{p\theta}}\right\}.$$
This proves Theorem \ref{theoLp} with $C_p(\psi_0)=\left(\frac{2K}{\eta_p\mu_\alpha}\right)^{\frac{1}{p\theta}}$. Notice that, as announced, $C_p(\psi_0)$ depends on the initial datum $\psi_0$ only through $\mu_\alpha$ and so only through the moment $M_{\frac{\gamma}{2}}(0)$.
\begin{rmq}\label{rmqabar} One sees from the above proof that $\overline{\alpha}=\min(\frac{1}{2},\alpha_\star)$ where $\alpha_\star$ is the parameter of Lemma \ref{lem:ka} (see also Remark \ref{rmqastar}).
\end{rmq}

\begin{rmq}\label{rmq:Cpunif} The constant $C_p(\psi_0)$ depends on the initial datum $\psi_0$ only through the inverse of the moment $M_{\frac{\gamma}{2}}(0)=\int_{\R^d} \psi_0(\xi)|\xi|^\gamma \d\xi$. In particular, with the notations of Lemma \ref{leminf} and Remark \ref{Cgamma}, one sees that, given $\gamma \in (0,1]$ and $0<\alpha<\overline{\alpha}$ then for any  $p  \in (1,p_\a^\star)$,
$$\sup_{t \geq 0}\|\psi(t)\|_{L^p} <  \max\left\{\|\psi_0\|_{L^p},C_p\right\}$$
for some constant $C_p >0$ depending only on $\alpha,$ $\gamma$, $b(\cdot)$ and the dimension $d$ provided $\psi_0 \in \mathcal{C}_\gamma(\alpha)$ satisfies the assumption of Theorem \ref{theoLp}.
\end{rmq}

\section{Weighted Sobolev estimates}\label{sec_sob}

We now set $\gamma=1$ and prove Theorem \ref{theo:sob}. The proof is very similar to that of \eqref{L2norm} and \eqref{H1norm} except that we need here to prove uniform in time bounds. The  restriction $\gamma=1$ comes from the fact that the best control of the loss term $\Q_{-}$ is available only for $\gamma=1$, see \eqref{eq:Q-}.

Multiplying \eqref{BEscaled} by $2 \psi(t,\xi)\langle \xi \rangle^{2k}$ and integrating over $\R^d$, we get \eqref{outil4}.
Now, according to \cite[Corollary 2.2]{AloGa}, for any $\varepsilon \in (0,1)$, there exists $C_\varepsilon > 0$ such that  
$$\int_{\R^d}\Q_+(\psi,\psi)(t,\xi)\psi(t,\xi)\langle \xi\rangle^{2k}\d\xi \leq C_\varepsilon \|\psi(t)\|_{L^1_{\frac{d(d-3)}{d-1}+k}}^{2-1/d}\,\|\psi(t)\|_{L^2_k}^{1+1/d}  + \varepsilon \|\psi(t)\|_{L^1_{k}}\,\|\psi(t)\|_{L^2_{k}}^2.$$
According to \eqref{h_o_mom}, since $\psi_0 \in  L^1_{\frac{d(d-3)}{d-1}+k}$ one has
$$\sup_{t \geq 0} \|\psi(t)\|_{ L^1_{\frac{d(d-3)}{d-1}+k}} < \infty$$ and, in turns, $\sup_{t \geq 0}\|\psi(t)\|_{L^1_k} < \infty.$ On the other hand, we have
$$ \sup_{t\geq 0} \left|\mathbf{A}_{\psi}(t)\right|\leq C \quad \mbox{ and }\quad  \sup_{t\geq 0} \left|\mathbf{B}_{\psi}(t) \right|   \leq C,$$
for some constant $C>0$.
Thus, bounding the $L^2_{k-1}$ norm by the $L^2_k$ one, \eqref{outil4} together with Lemma \ref{lem:ka} lead to
 $$\frac{\d}{\d t} \|\psi(t)\|^2_{L^2_{k}}+ 2\mu_\alpha \|\psi(t)\|_{L^2_{k+\frac{1}{2}}}\\ \leq C \|\psi(t)\|_{L^2_{k}}^2 + 2\, C_\varepsilon \, \|\psi(t)\|_{L^2_{k}}^{1+1/d}  + 2 \,\varepsilon\,M\,\|\psi(t)\|_{L^2_{k}}^2,$$
for some constants $C >0$ and $M > 0$ (depending on $k$). Now, choosing  $\varepsilon$ such that $ 2 \varepsilon M \leq \mu_\alpha$ we get the existence of some positive constants $C_{1}  > 0$ and $C_{2} > 0$ (still depending on $k$) such that
 $$\frac{\d}{\d t} \|\psi(t)\|^2_{L^2_{k}}+\mu_\alpha \|\psi(t)\|_{L^2_{k+\frac{1}{2}}}^2\leq C_{1}\|\psi(t)\|_{L^2_{k}}^2 + C_{2} \|\psi(t)\|_{L^2_{k}}^{1+1/d}.$$
 Now, one uses the fact that, for any $R > 0$, $$\|\psi(t)\|_{L^2_{k}}^2 \leq (1+R^2)^{k}\|\psi(t)\|_{L^2}^2+R^{-1}\|\psi(t)\|_{L^2_{k+1/2}}^2$$
and, since $\sup_{t\geq 0}\|\psi(t)\|_{L^2} < \infty$ by  Theorem \ref{theoLp}, one can choose $R > 0$ large enough so that $C_{1} R^{-1}=\mu_\alpha/2$ to obtain
$$\frac{\d}{\d t} \|\psi(t)\|^2_{L^2_{k}}+ \frac{\mu_\alpha}{2}\|\psi(t)\|_{L^2_{k+\frac{1}{2}}}^2 \leq C_{3} + C_{2} \|\psi(t)\|_{L^2_{k}}^{1+1/d}.$$
Taking $k=\frac{9+d}{2}+\kappa$, one obtains \eqref{L2normunif}  since $1+1/d < 2$.

\medskip
Let us  now prove \eqref{H1normunif}.
For the solution $\psi(t,\xi)$ to \eqref{BEscaled}, we set
$G_j(t,\xi)=\partial_j\psi(t,\xi)$ for $j\in\{1,\ldots,d\}$. Then, $G_j$ satisfies \eqref{eqGj_1}. 
For given $q \geq 1/2$, we multiply this equation by $2\,G_j(t,\xi)\langle \xi \rangle^{2q}$ and integrate over $\R^d$. Then,  after an integration by parts and using Lemma \ref{lem:ka},  one obtains
 \begin{multline}\label{eq_GJ_3}
\frac{\d}{\d t} \|G_j(t)\|^2_{L^2_q}+ \left(2 \mathbf{A}_{\psi}(t)+ (2-d-2q) \mathbf{B}_{\psi}(t)\right) \|G_j(t)\|^2_{L^2_q} + 2q \mathbf{B}_{\psi}(t)  \|G_j(t)\|^2_{L^2_{q-1}}  \\
\leq 2 (1-\alpha)\int_{\R^d}\partial_j\Q_+(\psi,\psi)(t,\xi)G_j(t,\xi)\langle \xi\rangle^{2q}\d\xi\\ - 2\mu_\alpha\|G_j(t)\|^2_{L^2_{q+\frac{1}{2}}}
-2\int_{\R^d} \Q_-(\psi,G_j)(t,\xi)G_j(t,\xi)\langle \xi\rangle^{2q}\d \xi.
 \end{multline}
Clearly, one has
\begin{equation*}\begin{split}
\int_{\R^d}\left|\partial_j\Q_+(\psi,\psi)(t,\xi)\right|\,|G_j(t,\xi)|\, \langle \xi\rangle^{2q}\d\xi &\leq \|\partial_j \Q_+(\psi,\psi)(t)\|_{L^2_{q-\frac{1}{2}}}\|G_j(t)\|_{L^2_{q+\frac{1}{2}}}\\
&\leq \|\Q_+(\psi,\psi)(t)\|_{\mathbb{H}^1_{q-\frac{1}{2}}}\|G_j(t)\|_{L^2_{q+\frac{1}{2}}}.\end{split}\end{equation*}
Now, using \cite[Theorem 2.7]{BaLo}, for any $\varepsilon > 0$, there exists $C_\varepsilon > 0$ such that
\begin{multline*}
\|\Q_+(\psi,\psi)(t)\|_{\mathbb{H}^1_{q-\frac{1}{2}}} \leq C_\varepsilon \|\psi(t)\|_{\mathbb{H}^{\frac{3-d}{2}}_{q+1+\kappa}}\|\psi(t)\|_{L^1_{2q+\frac{1}{2}+\kappa}}+\\
\varepsilon\,\|\psi(t)\|_{L^1_{q+\frac{1}{2}}}\,\|\psi(t)\|_{L^2_{q+\frac{1}{2}}}
+2\varepsilon\,\|\psi(t)\|_{L^1_{q+\frac{1}{2}}}\, \sum_{i=1}^d \|G_i(t)\|_{L^2_{q+\frac{1}{2}}}.
\end{multline*}
Since $d \geq 3$, one estimates the $\mathbb{H}^{\frac{3-d}{2}}_{q+1+\kappa}$ norm by the $L^2_{q+1+\kappa}$ norm and, using \eqref{L2normunif} together with \eqref{h_o_mom}, our assumptions on the initial datum implies that
$$\sup_{t \geq 0} \|\psi(t)\|_{L^2_{q+1+\kappa}} < \infty \quad \text{ and } \quad \sup_{t \geq 0}\,\|\psi(t)\|_{L^1_{2q+\frac{1}{2}+\kappa}} < \infty.$$
Therefore, for any $\varepsilon > 0$, there exists $C_1(\varepsilon,q) > 0$ and $C_2(q) > 0$ such that
$$\|\Q_+(\psi,\psi)(t)\|_{\mathbb{H}^1_{q-\frac{1}{2}}} \leq C_1(\varepsilon,q) +  \varepsilon\,C_2(q)\, \sum_{i=1}^d \|G_i(t)\|_{L^{2}_{q+\frac{1}{2}}}.$$
One estimates the last integral in \eqref{eq_GJ_3} as in the proof of \cite[Theorem 2.8]{BaLo}; namely, an integration by parts yields
\begin{equation}\label{eq:Q-}| \Q_{-}(\psi,G_j)(t,\xi)| = \psi(t,\xi) \left|\int_{\R^d} \partial_j \psi(t,\xi_*)|\xi-\xi_*| \, \d\xi_* \right| \leq  \psi(t,\xi) \|\psi(t)\|_{L^1}=\psi(t,\xi).\end{equation}
Then, Cauchy-Schwarz inequality yields
$$
\left|\int_{\R^d}  \Q_{-}(\psi,G_j)(t,\xi)\,G_j(t,\xi)\langle \xi\rangle ^{2q}\d \xi\right| \leq \|\psi(t)\|_{L^2_{q}}\,\|G_j(t)\|_{L^2_q} \leq C_q \,\|G_j(t)\|_{L^2_q}$$
for some positive $C_q >0$ where we used the uniform bounds on the $L^2_q$-norm of $\psi(t)$ provided by  \eqref{L2normunif}.
Recall that
\begin{equation*}
2\mathbf{A}_{\psi}(t)+\left(2-d-2q \right)\mathbf{B}_{\psi}(t)=-\frac{\alpha}{2}\left(d-2q +6\right)\mathbf{a}_{\psi}(t) + \frac{\alpha}{d}\left(d+2-2q\right)\mathbf{b}_{\psi}(t)\end{equation*}
while $2q\mathbf{B}_{\psi}(t)=-\alpha\,q\,\mathbf{a}_{\psi}(t) + \frac{\alpha\,2q}{d}\,\mathbf{b}_{\psi}(t).$
Since $q\leq 1 +\frac{d}{2}$, one may neglect all the terms involving $\mathbf{b}_{\psi}(t)$ to obtain the bound from below:
\bean
& &\left(2\mathbf{A}_{\psi}(t)+\left(2-d-2q\right)\mathbf{B}_{\psi}(t)\right)\|G_j(t)\|_{L^2_q}^2  + 2q\mathbf{B}_{\psi}(t)\,\|G_j(t)\|_{L^2_{q-1}}^2\\
& & \hspace{4cm}\geq -\frac{\alpha}{2}\left(d + 6\right)\mathbf{a}_{\psi}(t)\,\|G_j(t)\|_{L^2_q}^2 + \alpha\,q \,\mathbf{a}_{\psi}(t) \left(\|G_j(t)\|_{L^2_q}^2-\|G_j(t)\|_{L^2_{q-1}}^2\right)\\
& & \hspace{4cm}\geq -\frac{\alpha}{2}\sqrt{d}\left(d+6\right)\|G_j(t)\|_{L^2_q}^2
\eean
using the fact that $\mathbf{a}_\psi(t) \leq \sqrt{d}$ for any $t \geq 0$ (following the arguments of \cite[Lemma 2.1]{BaLo}). Thus, \eqref{eq_GJ_3} reads
\begin{multline*}
\frac{\d}{\d t} \|G_j(t)\|^2_{L^2_q}  -\frac{\alpha}{2}\sqrt{d}\left(d+6\right)\|G_j(t)\|_{L^2_q}^2 +2\mu_\alpha\|G_j(t)\|_{L^2_{q+\frac{1}{2}}}^2\\
\leq 2(1-\alpha)C_1(\varepsilon,q)\|G_j(t)\|_{L^2_{q+\frac{1}{2}}} +  \varepsilon\,C_2(q)\,\|G_j(t)\|_{L^2_{q+\frac{1}{2}}}\, \sum_{i=1}^d \|G_i(t)\|_{L^{2}_{q+\frac{1}{2}}} + 2C_q \|G_j(t)\|_{L^2_{q}}
\end{multline*}
where $C_q, C_1(\varepsilon,q)$ and $C_2(q)$ are positive constants independent of $\alpha$ and $t$. Define, for any $k \geq 0$, the semi-norm
$$\|\psi(t)\|_{\overset{\circ}{\mathbb{H}}^1_k}=\left(\sum_{j=1}^d \|\partial_j \psi(t)\|_{L^2_k}^2\right)^{1/2}.$$
Setting $\alpha_1:=\min\left\{\overline{\alpha},\alpha_0,\frac{4\mu_\alpha}{\sqrt{d}(d+6)}\right\}$ and summing  over all $j\in\{1,\ldots, d\}$, we get
\begin{multline*}
\frac{\d}{\d t} \|\psi(t)\|_{{\overset{\circ}{\mathbb{H}}^1_q}}^2+  \frac{\sqrt{d}}{2} (d+6)(\alpha_1-\alpha) \|\psi(t)\|_{\overset{\circ}{\mathbb{H}}^1_{q+\frac{1}{2}}}^2\\
\leq 2C_{1}(\varepsilon,q) \sum_{j=1}^d\|G_j(t)\|_{L^2_{q+\frac{1}{2}}}+    \varepsilon C_2(q) \left(\sum_{j=1}^d\|G_j(t)\|_{L^{2}_{q+\frac{1}{2}}}\right)^2  +2C_q\sum_{j=1}^d \|G_j(t)\|_{L^2_q} \\
\leq 2C_{1}(\varepsilon,q) \sum_{j=1}^d\|G_j(t)\|_{L^2_{q+\frac{1}{2}}}+    d\varepsilon C_2(q) \|\psi(t)\|_{\overset{\circ}{\mathbb{H}}^1_{q+\frac{1}{2}}}^2+2\sqrt{d}C_q\|\psi(t)\|_{\overset{\circ}{\mathbb{H}}^1_q}.
 \end{multline*}
 Using Young's inequality, for any ${\delta}^{\star} > 0$ one gets
\begin{multline*}
\frac{\d}{\d t} \|\psi(t)\|_{\overset{\circ}{\mathbb{H}}^1_q}^2+  \frac{\sqrt{d}}{2} (d+6)(\alpha_1-\alpha) \|\psi(t)\|_{\overset{\circ}{\mathbb{H}}^1_{q+\frac{1}{2}}}^2\\
\leq \left( 2{\delta}^{\star}\,C_1(\varepsilon,q)  + d\varepsilon\,C_2(q)\right)\|\psi(t)\|_{\overset{\circ}{\mathbb{H}}^1_{q+\frac{1}{2}}}^2 + \frac{2\,d\,C_1(\varepsilon,q)}{{\delta}^{\star}} + 2\sqrt{d}C_q\|\psi(t)\|_{\overset{\circ}{\mathbb{H}}^1_q}.\end{multline*}
For any fixed $\alpha < \alpha_1$, one can choose first $\varepsilon > 0$ small enough and then ${\delta}^{\star} > 0$ small enough so that
 $\left( 2{\delta}^{\star}\,C_1(\varepsilon,q)  + d\varepsilon\,C_2(q)\right)= \frac{\sqrt{d}}{4} (d+6)(\alpha_1-\alpha) $ to get
$$\frac{\d}{\d t} \|\psi(t)\|_{\overset{\circ}{\mathbb{H}}^1_q}^2+ \frac{\sqrt{d}}{4} (d+6)(\alpha_1-\alpha)\|\psi(t)\|_{\overset{\circ}{\mathbb{H}}^1_{q+\frac{1}{2}}}^2 \leq
2\sqrt{d}\,C_q \|\psi(t)\|_{\overset{\circ}{\mathbb{H}}^1_q} + C$$
which yields easily the conclusion taking $q=\frac{d+7+\kappa}{2}$.

\section{Existence of self-similar profile}\label{sec_autosim}

We now proceed to the proof of the main result of this paper, that is the proof of  Theorem \ref{existence}.
 As already announced, the existence of a stationary solution
to \eqref{BEscaled} relies on the application of Theorem \ref{GPV}
to the evolution semi-group $(\mathcal{S}_t)_{t \geq 0}$ governing
\eqref{BEscaled}.  Let us now fix $\alpha < \alpha_1$. For any nonnegative $\psi_0 \in L^1_{3}(\R^d) \cap L^2(\R^d)$ satisfying \eqref{hypini},  let $\psi(t)=\mathcal{S}_t \psi_0$ denote the unique solution to
\eqref{BEscaled} with initial state $\psi(0)=\psi_0$ constructed by Theorem \ref{well_posedness}. 
Thanks to the uniform bounds on the $L^1_{3}(\R^d)$ and $L^{2}(\R^d)$ norms provided by Proposition \ref{theoMom} and Theorem \ref{theoLp} respectively combined with the propagation of lower bounds for $M_{\frac{1}{2}}(t)$ (see Lemma \ref{leminf}, Remarks \ref{Cgamma} \& \ref{rmq:Cpunif}) and the weighted Sobolev estimates of Theorem \ref{theo:sob}, the nonempty convex subset
\begin{multline*}\mathcal{Z}= \left\{ 0 \leq \psi \in L^1(\R^d), \hspace{0.3cm} \psi(\xi)=\overline{\psi}(|\xi|) \quad \forall \xi \in \R^d,
         \int_{\R^d} \psi(\xi)\d \xi= 1,\quad \int_{\R^d}\psi(\xi)|\xi|^2\d\xi=\frac{d}{2}\right.\\
           \left.  \int_{\R^d}\psi(\xi)|\xi|^3\d\xi\leq M_1, \quad  \|\psi\|_{L^{2}} \leq M_2,  
\quad \int_{\R^d}\psi(\xi)|\xi|^{\mathfrak{q}(\kappa)}\d\xi \leq M_3, \right.\\
           \left. \|\psi\|_{L^{2}_{\frac{9+d}{2}+\kappa}} \leq M_4,  
 \quad \|\nabla\psi\|_{L^2_{\frac{7+d+\kappa}{2}}} \leq M_5 \quad  \mbox{ and }\quad \int_{\R^d}\psi(\xi)|\xi|\d\xi\geq K 
  \right\} 
\end{multline*} 
with $\mathfrak{q}(\kappa)=\max\left\{\frac{9+d(d-2)}{2}+\kappa, 10+d+2\kappa\right\}$, is stable by the semi-group provided $M_1$, $M_2$, $M_3$, $M_4$, 
$M_5$ are big enough and $K$ is small enough. This set is compact in 
$\mathcal{Y}= L^1(\R^d)$ endowed with the weak topology by Dunford-Pettis 
Theorem. Let us now justify that  for all $t \ge 0$, $\mathcal{S}_t$  is
 continuous on $\mathcal{Z}$. By \cite[Corollary 1.2.2]{vra}, it is sufficient to check that for all $t \ge 0$, $\mathcal{S}_t$  is sequentially
 continuous on $\mathcal{Z}$. Fix $\psi_0 \in \mathcal{Z}$. Let 
$(\psi_0^n)_{n\in\N}$ be a sequence from $\mathcal{Z}$ that converges to 
$\psi_0$ in $\mathcal{Y}$. For any $n\in\N$, we then denote by $\psi^n$ the 
solution to \eqref{BEscaled} with initial condition $\psi_0^n$. Let $T>0$. 
Proceeding as in the proof of Proposition \ref{conv4}, it is 
clear that the sequence  $(\psi^n)_{n\in\N}$ is relatively compact in 
$\C([0,T],w-L^1(\R^d))$. Thus, there exists a subsequence $(\psi_{n_k})_k$ 
which converges to some $\psi\in \C([0,T],w-L^1(\R^d))$. Passing to the limit in \eqref{weakformu}, we deduce that $\psi$ is the solution to \eqref{BEscaled} with initial condition $\psi_0$. Since $(\psi^n)_{n\in\N}$ admits a unique limit point, this sequence is convergent, which proves the sequential continuity of $\mathcal{S}_t$  at $\psi_0$ for any $t\in[0,T]$.
Then, Theorem \ref{GPV} shows that, for any $\alpha < \alpha_1$, there exists a nonnegative stationary solution to \eqref{BEscaled} in $L^1_{3}(\R^d) \cap L^2(\R^d)$ with unit mass and energy equal to $\frac{d}{2}$. 

\begin{rmq} Notice that, unfortunately, we are able to construct only radially symmetric solutions to \eqref{tauT}. Clearly, this relies on the restriction \eqref{HYP} for the control of $L^p$ norms. At first sight, it may seem possible to construct solutions to \eqref{tauT} with zero bulk velocity but it  is not known whether this property is preserved by the semi-group $(\mathcal{S}_t)_{t \geq 0}$. Since the property of being radially symmetric is preserved by $(\mathcal{S}_t)_{t \geq 0}$, we have to restrict our choice to that class of self-similar solutions.
\end{rmq}

\begin{rmq}
In the special case of hard spheres interactions in dimension $d=3$, i.e. whenever $\mathcal{B}(\xi-\xi_*,\sigma)=\frac{|\xi-\xi_*|}{4\pi}$, one has according to Remarks \ref{rmqalpha0}, \ref{rmqastar} and \ref{rmqabar} that $\alpha_0 =\frac{2}{7},$ $\overline{\alpha}=\frac{1}{4}$. Therefore, $\alpha_1\leq\frac{1}{4}$.
\end{rmq}

 \section{Conclusion and perspectives}\label{sec:discuss}

We derived in the present paper the existence of a self-similar profile $\psi_H$ associated to the probabilistic ballistic annihilation equation \eqref{BE}. Such a self-similar profile is actually the steady state of the rescaled equation \eqref{BEscaled} and the existence of such a steady state was taken for granted in various papers in the physics literature \cite{Maynar1, Maynar2, Trizac}. Our paper thus provides  a rigorous justification of some of the starting point of the analysis of the \textit{op. cit.}. The self-similar profile $\psi_H$ we constructed is isotropic, i.e.
$$\psi_H(\xi)=\overline{\psi_H}(|\xi|), \qquad \xi \in \mathbb{R}^d$$
and the existence is proven only in a given (explicit) range of the probability parameter $\alpha.$ Namely, we proved the existence of $\psi_H$ only whenever the probability parameter $\alpha$ lies in some interval $(0,\alpha_1)$ with some explicit $\alpha_1 >0$. Even if the parameter $\alpha_1 >0$ is certainly not optimal, this restriction arises naturally from our method of proof; in particular, it seems difficult to prove uniform in time estimates of the higher-order moments for all range of parameters $\alpha \in (0,1)$. However, our restriction on the initial datum (isotropy, $L^p$-integrability) and on the probability parameter $\alpha$ leaves several questions open. Let us list a few of them that can be seen as possible perspectives for future works.

\subsection{Uniqueness} A first natural question that should be addressed is of course  the uniqueness of the self-similar profile $\psi_H$. Clearly, since our existence result is based upon a compactness argument (via Tykhonov fixed point Theorem \ref{GPV}) it does not provide any clue for uniqueness. We believe that, as it is the case for the Boltzmann equation with inelastic hard spheres \cite{MiMo3, BCL}, a perturbation argument is likely to be adapted here. Such an approach consists in taking profit of the knowledge of  the stationary solution in the "pure collisional limit"  $\alpha=0$ (for which the steady state is clearly a uniquely determined Maxwellian distribution) and to prove  quantitative estimates of the convergence of stationary
solution  as the  parameter $\alpha$ goes to $0.$ It is likely that such a uniqueness result would require a good knowledge of some quantitative \textit{a posteriori estimates} for the self-similar profile $\psi_H$.

\subsection{A posteriori estimates for $\psi_H$} Typically, we may wonder what are the thickness of the tail of $\psi_H$; more precisely, one should try to find explicit $r >0$, $a >0$ - possibly independent of the parameter $\alpha$ - such that
$$\int_{\R^d} \psi_H(\xi)\exp(a |\xi|^r)\d\xi < \infty.$$
Besides such integral upper bound, one also may wonder if good $L^\infty$-bounds can be derived for $\psi_H$ (at least in the limit $\alpha \to 0$), i.e. is it possible to derive universal explicit functions $\underline{M}(\xi)$ and $\overline{M}(\xi)$ such that
$$\underline{M}(\xi) \leq \psi_H(\xi) \leq \overline{M}(\xi) \qquad \forall \xi \in \mathbb{R}^d \text{ and any } \alpha \in (0,\overline{\alpha}).$$

\subsection{Intermediate asymptotics}\label{interm} A fundamental problem, related to the original probability annihilation equation \eqref{BE}, is to understand the role of the self-similar profile $\psi_H$ (if unique). Indeed, we know that solutions to \eqref{BE} are vanishing as $t \to \infty$
$$\lim_{t \to \infty} f(t,v)=0$$
and physicists expect that the self-similar profile should play the role of an \textit{intermediate asymptotic} in the following sense. One expects to find suitable explicit scaling functions $a(\cdot), b(\cdot)$ a rescaled density $\psi=\psi(\tau,\xi)$ and a rescaled time $\tau(t)$ which are such that, if $f$ is a solution to \eqref{BE} in the form
\begin{equation*}f(t,v)=a(t)\psi(\tau(t),b(t)v)\end{equation*}
then the rescaled density $\psi$ is such that
$$\psi(\tau,\xi) \longrightarrow \psi_H(\xi) \qquad \text{ as } \quad \tau \to \infty.$$
The convergence, in rescaled variables, to a unique self-similar profile is a well-known feature of kinetic equation exhibiting a lack of collisional invariants. In particular, for granular flows described by inelastic hard-spheres, such a self-similar profile (known as the homogeneous cooling state) is known to attract all the solutions to the associated Boltzmann equation yielding a proof of the so-called Ernst-Brito conjecture (see \cite{MiMo3} for a proof and a complete discussion on this topic).

A related question is also the exact decay of the macroscopic quantities associated to solutions $f(t,v)$ to \eqref{BE}: it has already been observed that the number density
$$n(t)=\int_{\mathbb{R}^d} f(t,v)\d v$$
and the kinetic energy
$$E(t)=\int_{\mathbb{R}^d} f(t,v)|v|^2 \d v$$
are continuously decreasing if $\alpha \in (0,1)$ and converge to zero as $t \to \infty$. To determine the precise rate of convergence to zero for such quantities is a physically relevant problem. Notice that for the particular solution $f_H(t,v)$ (constructed in \eqref{scalingfpsi} through the self-similar profile)  the density $n_H(t)$ and energy $E_H(t)$ satisfy
 $$n_H(t)\,E_H(t) \simeq C t^{-2} \qquad \text{ as } t \to \infty$$
for some $C >0$ in the case of true-hard spheres (i.e. whenever $\gamma=1$) as can easily be deduced from \eqref{haff}. One may wonder if such a decay is \emph{universal}, i.e. does any solution $f(t,v)$ to \eqref{BE} is such that $n(t)\,E(t)$ behaves as $t^{-2}$ for large times ?  Partial answers, based upon heuristic and dimensional arguments, are provided by physicists \cite{Piasecki} and it would be interesting to provide a rigorous justification of these results.  Exploiting again the analogy with the Boltzmann description of granular flows, expliciting the decay rate of the number density and the kinetic energy would be the analogue of the so-called Haff's law for inelastic hard-spheres (see \cite{MiMouh06, AloLo1}).

\subsection{Improvement of our result: the special role of entropy} Besides the above cited fundamental questions, we may also discuss some possible improvements of the results we obtained in the present paper. First, one may try to extend the range of parameters $\alpha$ for which our result holds. Notice that, since we strongly believe that the self-similar profile $\psi_H$ is unique in some peculiar regime (at least whenever $\alpha \simeq 0$), getting rid of the isotropic assumption on $\psi_H$ is not particularly relevant. However, in both Theorems \ref{well_posedness} and Theorem \ref{existence}, the hypothesis of $L^p$-integrability does not have a clear physical meaning. It would be interesting to investigate if such an assumption can be relaxed: for instance, it would be more satisfactory to prove the well-posedness result Theorem \ref{well_posedness} under the sole assumption that the initial datum is of finite entropy. Unfortunately, we did not succeed in proving that the flow solution associated to \eqref{BEscaled} propagates suitable bounds of the entropy functional.

\appendix

\section{Well-posedness for the Boltzmann equation with ballistic annihilation}\label{appendixA}

In this appendix, we only give the main lines of the proof of Theorem \ref{exi_BE}. Indeed, the proof of Theorem \ref{exi_BE} may be easily adapted from that of Theorem \ref{well_posedness}.

\bigskip
 Let us denote by $f_0$ a nonnegative distribution function from $W^{1,\infty}(\R^d) \cap L^1_{2+\gamma}(\R^d)$.
Let $n\in\N$. We consider first the well-posedness of the following truncated
equation
\beq\label{trunc_BE}
\partial_t f (t,v) = \mathbb{B}^n(f,f)(t,v)
\eeq
where the collision operator $\mathbb{B}^n(f,f)$ is given by \eqref{Bn}.
Let $T>0$ and
$$h\in \C([0,T]; L^1(\R^d))\cap L^\infty((0,T); L^1(\R^d,|v|^{2+\gamma}\, dv))$$
be fixed.
We introduce the auxiliary equation:
\begin{equation}\label{annihi_lin_f}
\begin{cases}
& \partial_t f (t,v)+\LLn(t,v)\, f(t,v)= (1-\alpha)\, \Q^n_+(h,h)(t,v),\\
& f(0,v)=f_0(v).\end{cases}\end{equation}
Here, as in Section \ref{sec:cauchy},
$$\LLn(t,v):=\int_{\R^d \times \S^{d-1}} \B_n(v-v_*,\s)\,
h(t,v_*)\, \d v_*\, \d\sigma=\|b_n\|_{\L} \int_{\R^d} \Phi_n(|v-v_*|)\, h(t,v_*)
\, \d v_*.$$
The Cauchy problem \eqref{annihi_lin_f} admits a unique solution given by
\begin{eqnarray}\label{solf}
f (t,v) & =& f_0(v) \exp\left(-\int_0^t \LLn(\tau,v)\, \d\tau\right)  \nonumber \\
 & +&   (1-\alpha) \int_0^t \exp\left(-\int_s^t \LLn(\tau,v)\, \d\tau\right)\, \Q_+^n(h,h)(s,v)\, \d s.
\end{eqnarray}

For any $T >0$ and any $M_1,M_2,L, C_\gamma >0$ (to be fixed later on), we define $\H=\H_{T,M_1,M_2,L, C_\gamma}$ as the set of all nonnegative
$h\in\C([0,T];L^1(\R^d))$ such that
$$\sup_{t\in[0,T]} \int_{\R^d} h(t,v)\, \d v\leq M_1, \qquad
\sup_{t\in[0,T]} \int_{\R^d} h(t,v)\,|v|^2\,  \d v \leq M_2,$$
and
$$ \sup_{t\in[0,T]} \int_{\R^d} h(t,v)\,|v|^{2+\gamma}\,  \d v \leq C_\gamma,
\qquad  \sup_{t\in[0,T]} \| h(t)\|_{W^{1,\infty}} \leq L.$$
Define then the mapping $$\T\::\:\H\longrightarrow \C([0,T];L^1(\R^d))$$
 which, to any $h \in \H$, associates the solution $f=\T(h)$ to \eqref{annihi_lin_f} given by \eqref{solf}. We look for parameters $T, M_1, M_2, C_\gamma$ and $L$ that ensures $\T$ to map $\H$ into itself. 

\medskip
\paragraph{\textit{\textbf{Control of the density.}}}  One checks easily that  the solution $f(t,v)$ given by \eqref{solf} fulfills
\begin{equation}\label{density_f}
\sup_{t \in [0,T]} \int_{\R^d}f(t,v)\,\d
  v \leq \|f_0\|_{L^1} + (1-\alpha)\,n^\gamma \, \|b_n\|_{\L}\, M^2_1 \, T, \qquad \forall h \in \H.\end{equation}

\paragraph{\textit{\textbf{Control of the moments.}}}  Arguing as above and as in Section \ref{sec:cauchy}, we get
\bea
\hspace{-1cm}\sup_{t \in [0,T]}\int_{\R^d}f(t,v)\, |v|^2\, \d v & \leq & \int_{ \R^d} f_0(v)\,|v|^2\, \d v  + 4 \,(1-\alpha) \,n^\gamma \, \|b_n\|_{\L}\, M_1 \,M_2\,  T,\label{f_r=2}\\
\hspace{-1cm}\sup_{t \in [0,T]}\int_{\R^d}f(t,v)\, |v|^{2+\gamma}\, \d v&  \leq&
\int_{\R^d}f_0(v)\, |v|^{2+\gamma}\, \d v+ 2^{2+\gamma} \,(1-\alpha) \,n^\gamma \, \|b_n\|_{\L}\, M_1 \,C_\gamma\,  T,\label{f_r=2+d}
\eea
for any $h \in \H$.\\

\paragraph{\textit{\textbf{Control of the $W^{1,\infty}$ norm.}}}

Here again as in Section \ref{sec:cauchy}, we obtain,
\begin{equation}\label{Winfty_f}\begin{split}
\sup_{t \in [0,T]}\|f(t)\|_{W^{1,\infty}} &\leq  \|f_0\|_{W^{1,\infty}}\, (1+2\,n^\gamma \, \|b_n\|_{\L}\, M_1 \, T ) \\
&+ 2\, (1-\alpha) \, n^{1+\gamma} \, \|b_n\|_{\L}\, M_1 \, L\, T \,(2+n^\gamma \, \|b_n\|_{\L}\, M_1 \,  T).\end{split}\end{equation}

Now, from \eqref{density_f}-\eqref{Winfty_f}, one sees that, choosing for instance
 $M_1=2 \|f_0\|_{L^1}$,
$$M_2=2\int_{ \R^d} f_0(v)\,|v|^2 \, \d\xi, \qquad
C_\gamma= 2 \int_{\R^d} f_0(\xi) \, |\xi|^{2+\gamma}\, \d\xi, \qquad
L= 4\, \|f_0\|_{W^{1,\infty}} $$
and
$$T=\frac{1}{16\,\|b_n\|_{\L}\, M_1\, n^{1+\gamma} }\; \min\{1,\;2^{1-\gamma} \, n\},$$
we get that $f \in \H$, i.e. with the above choice of the parameters
$M_1,M_2,C_\gamma,L,T$, one has  $\T(\H) \subset \H$. On the other hand, given $h_1,h_2\in \H$, one deduces from \eqref{annihi_lin_f} and Lemma \ref{ABC} that there exists some constant $C>0$ such that 
\beq \label{stab_2}
\sup_{t \in [0,T]} \left\|\T(h_1)(t)-\T(h_2)(t)\right\|_{L^1_2} \leq C \sup_{t \in [0,T]}\left\|h_1(t)-h_2(t)\right\|_{L^1_2} .
\eeq 
Moreover, $\T(\H)$ is a relatively compact subset of  $\C([0,T],L^1_2(\R^d))$. Thus, the Schauder fixed point theorem ensures the existence of some fixed point $f^1$ of $\T$, i.e. there exists $f^1  \in \C([0,T]; L^1_2(\R^d))\cap L^\infty((0,T); L^1_{2+\gamma}(\R^d)\cap
W^{1,\infty}(\R^d))$ solution to \eqref{trunc_BE}. Integrating equation \eqref{trunc_BE} against $1$ and $|v|^2$ over $\R^d$, we get
$$\frac{\d}{\d t} \int_{ \R^d}  f^1(t, v)\, \d v \leq 0 \qquad \mbox{and}
\qquad  \frac{\d}{\d t}  \int_{ \R^d}  f^1(t, v)\, | v|^2\,  \d v\leq 0.$$
Consequently, $f^1$ satisfies \eqref{ineg} and $\|f^1(T,.)\|_{L^1}\leq  \|f_0\|_{L^1}.$
Since the time $T$ only depends on the inverse of $\|f_0\|_{L^1}$, by a standard continuation argument, we construct a global solution $f$ to \eqref{trunc_BE}. Uniqueness clearly follows from \eqref{stab_2}.

In order to prove Theorem \ref{exi_BE}, we now need to get rid
of the bound in $W^{1,\infty}(\R^d)$ for the initial condition and to pass to
the limit as $n\to +\infty$. Let $f_0\in L^1_{2+\gamma}(\R^d)$ be
a nonnegative distribution function. There
exists a sequence of nonnegative functions $(f_0^n)_{n\in\N}$ in
$W^{1,\infty}(\R^d)\cap L^1_{2+\gamma}(\R^d)$  that converges to $f_0$ in $L^1_2(\R^d)$ and that satisfies, for any $n\in\N$,
\beq\label{majof} \|f_0^n\|_{L^1}\leq \|f_0\|_{L^1}\quad \mbox{ and } \quad
\int_{\R^d} f_0^n(v)\,|v|^{2+\gamma}\, \d v \leq 2^{1+\gamma} \|f_0\|_{L^1}
+2^{1+\gamma} \int_{\R^d}f_0(v)\, |v|^{2+\gamma}\,\d v.
\eeq
We infer from the above properties of $(f_0^n)_{n\in\N}$ that there exists some $N_0\in\N$ such that for $n\geq N_0$,
\beq\label{minof1}
\frac{1}{2}\; \|f_0\|_{L^1} \leq \int_{\R^d}f_0^n(v)\, \d v \leq \|f_0\|_{L^1}
\eeq
 and
\beq\label{minof2}
\frac{1}{2} \int_{\R^d}f_0(v)\,|v|^2\, \d v \leq \int_{\R^d}f_0^n(v)\,|v|^2\, \d v \leq 2 \int_{\R^d}f_0(v)\,|v|^2\, \d v.
\eeq
For each $n\in\N$, we denote by $f_n$ the solution to \eqref{trunc_BE} with
initial condition $f_0^n$. Our purpose is to show that $(f_n)_{n\in\N}$
is a Cauchy sequence in $\C([0,T];L^1_2(\R^d))$ for any $T>0$. However, this
requires uniform estimates on $f_n$. So, we now show uniform bounds for moments of $f_n$.

\begin{lem}\label{lemCT_f}
Let $T>0$ and $s>2$. Assume that $\|f_0\|_{L^1_s}<\infty$.
Then, there exists some constant $C$ depending only on $\a$, $d$, $\g$,
$s$, $T$, $b(\cdot)$ and  $\|f_0\|_{L^1_s}$ such that, for $n\geq N_0$,
\beq\label{f_mom_s}
\sup_{t\in[0,T]} \int_{\R^d} f_n(t, v)\, | v|^{s} \, \d v \leq C \quad
\mbox{ and } \quad\int_0^T \|f_n(t)\|_{L^1} \int_{\R^d} f_n(t, v)\,  \Phi_n(| v|) \,
| v|^s\, \d v \, \d t \leq C.
\eeq
\end{lem}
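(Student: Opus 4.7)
The plan is to mimic the proof of Lemma \ref{lemCT}, with the simplification that equation \eqref{trunc_BE} contains no drift term and that the mass and the energy of $f_n$ are non-increasing. Multiplying \eqref{trunc_BE} by $|v|^s$ and integrating gives, with $Y_s^n(t)=\int_{\R^d} f_n(t,v)\,|v|^s\,\d v$,
\begin{equation*}
\frac{\d}{\d t}Y_s^n(t)=\frac{1-\alpha}{2}\iint f_n(t,v)\,f_n(t,v_*)\,\Phi_n(|v-v_*|)\,K_s^n(v,v_*)\,\d v\,\d v_*-\alpha\int_{\R^d}\Q_-^n(f_n,f_n)(t,v)\,|v|^s\,\d v,
\end{equation*}
with $K_s^n$ exactly as in the proof of Lemma \ref{lem_2+gamma}. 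The annihilation contribution is non-positive and may simply be dropped.

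The key estimate is the Povzner--Lu inequality from \cite[Lemma~11]{Lu00} (already invoked in Lemma \ref{lemCT}), which yields $K_s^n(v,v_*)\leq c_1(|v|^{s-\gamma}|v_*|+|v||v_*|^{s-\gamma})-c_2(n)|v|^s$, where $c_1$ depends only on $s$ and $d$ and $c_2(n)\geq c_2(2)>0$ for $n\geq 2$. Combining this with the elementary inequalities \eqref{maj_wn} and \eqref{min_Phi_n}, the negative contribution from $-c_2(n)|v|^s$ produces the dissipative term
\begin{equation*}
-\iint f_n(t,v)\,f_n(t,v_*)\,\Phi_n(|v-v_*|)\,|v|^s\,\d v\,\d v_*\leq -\|f_n(t)\|_{L^1}\int_{\R^d} f_n(t,v)\,\Phi_n(|v|)\,|v|^s\,\d v+Y_\gamma^n(t)\,Y_s^n(t),
\end{equation*}
which is precisely the weighted product appearing in the second conclusion of the lemma.

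To close the differential inequality, we use that mass and energy are non-increasing, so $\|f_n(t)\|_{L^1}\leq\|f_0\|_{L^1}$ and $Y_2^n(t)\leq\int f_0|v|^2\,\d v$ uniformly in $n$ and $t$, whence every moment $Y_\nu^n(t)$ of order $\nu\in[0,2]$ is uniformly controlled by some constant $C_0$ depending only on $f_0$. Since $\gamma\leq 1$, the remaining positive contributions $Y_s^n(t)\,Y_1^n(t)$ and $Y_{s-\gamma}^n(t)\,Y_{1+\gamma}^n(t)$ are bounded by $C(1+Y_s^n(t))$ via this observation and the trivial bound $Y_{s-\gamma}^n(t)\leq 1+Y_s^n(t)$. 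We therefore arrive at
\begin{equation*}
\frac{\d}{\d t}Y_s^n(t)+\frac{(1-\alpha)\,c_2(2)}{2}\,\|f_n(t)\|_{L^1}\int_{\R^d} f_n(t,v)\,\Phi_n(|v|)\,|v|^s\,\d v\leq C\bigl(1+Y_s^n(t)\bigr),
\end{equation*}
for some constant $C>0$ depending only on $\alpha$, $d$, $\gamma$, $s$, $b(\cdot)$ and $\|f_0\|_{L^1_s}$. Dropping the dissipative term and applying Gronwall's lemma, together with the bound $Y_s^n(0)\leq 2^{s-1}(\|f_0\|_{L^1}+\|f_0\|_{L^1_s})$ from \eqref{majof}, yields the first conclusion; integrating the full inequality over $[0,T]$ then produces the second one.

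The only delicate point in the argument is to ensure that the factor $\|f_n(t)\|_{L^1}$ arises naturally in front of the dissipative moment, for here the mass is not conserved and cannot simply be factored out. This is precisely what \eqref{min_Phi_n} achieves when applied inside a double integral, freeing $\|f_n(t)\|_{L^1}$ from the integration over the variable $v_*$ and thus justifying the specific weighted form of the second estimate in the statement of the lemma.
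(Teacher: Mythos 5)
Your proposal is correct and follows essentially the same route as the paper: the same moment identity for $Y^n_s$, the Povzner--Lu estimate of \cite[Lemma 11]{Lu00} for $K^n_s$, the inequalities \eqref{maj_wn} and \eqref{min_Phi_n} to extract the dissipative term with the factor $\|f_n(t)\|_{L^1}$, control of low-order moments via the decreasing mass and energy, and a Gronwall argument combined with \eqref{majof} for the initial moment. The only cosmetic difference is that the bound $Y^n_{s-\gamma}\leq 1+Y^n_s$ should read $Y^n_{s-\gamma}\leq \|f_n(t)\|_{L^1}+Y^n_s\leq \|f_0\|_{L^1}+Y^n_s$, which does not affect the conclusion.
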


\begin{proof}
Let $s>2$ and $n\geq N_0$. Our proof follows the same lines as the proof
of Lemma \ref{lemCT}.  As previously, we have
\bean
\frac{\d Y^n_s}{\d t}(t)
& = & \frac{1-\a}{2} \int_{\R^d}\int_{\R^d} f_n(t, v)\, f_n(t, v_*)
\, \Phi_n(| v- v_*|) \, K^n_s( v, v_*)\, \d v\, \d v_* \nonumber \\
& - & \a \int_{\R^d} \Q^n_-(f_n,f_n)(t, v)\, | v|^s\, \d v,
\eean
where $ Y^n_s(t)=\int_{\R^d} f_n(t,v) \, |v|^s\, dv$.
Now, arguing as in the proof of Lemma \ref{lemCT}, we obtain
\bean
& & \hspace{-1cm} \frac{\d}{\d t} Y^n_s(t) + \frac{(1-\a)\, c_2(n)}{2} \;
\|f_n(t)\|_{L^1} \int_{\R^d} f_n(t, v)\,  \Phi_n(| v|) \, | v|^s\, \d v \\
&  & \leq \frac{c_2(n)}{2} \; Y^n_s(t)\,   Y^n_\g(t)+ c_1 \left( Y^n_s(t)\, Y^n_1(t) + Y^n_{s-\g}(t)\, Y^n_{1+\g}(t) \right).
\eean
Finally,
$$ \frac{\d}{\d t} Y^n_s(t) + \frac{(1-\a)\, c_2(2)}{2}\; \|f_n(t)\|_{L^1}
\int_{\R^d} f_n(t, v)\,  \Phi_n(| v|) \, | v|^s\, \d v
\leq  C_3\,  Y^n_s(t) + 2 \, c_1\,\|f_0\|_{L^1_2},$$
where $C_3=(c_2^\infty+4c_1)\|f_0\|_{L^1_2}$.
Then, (\ref{f_mom_s}) follows easily from the Gronwall Lemma and \eqref{majof}.
\end{proof}

Observe that the second inequality of \eqref{mom_s} has to be modified in that case. Since the mass of the solution is decreasing, we do not recover, as previously, that moments of order $2+\gamma$ are integrable. This is the reason why we assume here that the initial condition lies in $L^1_{2+\gamma}$. Thanks to Lemma \ref{lemCT_f}, it then follows that moments of order $2+\gamma$ are uniformly bounded. We are thus in a position to prove that $(f_n)_{n\in\N}$ is a Cauchy sequence in $\C([0,T];L^1_2(\R^d))$ for any $T>0$. We omit the proof since it follows exactly the same lines as the proof \cite[Theorem 4.1]{MiWe99}. Then denoting by $f\in \C([0,T];L^1_2(\R^d))$ the limit of the sequence  $(f_n)_{n\in\N}$, it is easy to check that $f$ is a weak solution to \eqref{BE}. Performing the same calculations as in the proof of Proposition \ref{propostab} (with the $L^1_2$ norm instead of the $L^1_{2+\gamma}$ norm), we prove the uniqueness of such a solution.

\section{The case of Maxwellian molecules kernel}\label{appendixB}

We discuss in this appendix the particular case of Maxwellian molecules.
Notice that the Boltzmann equation for ballistic annihilation associated to Maxwellian molecules has been already studied in the mid-80's \cite{spiga,santos}, and was referred to \textit{as Boltzmann equation with removal}. Consider as above, the equation \begin{equation}\label{BEapp}
\partial_t f(t,v)=(1-\alpha)\Q(f,f)(t,v) -\alpha \Q_-(f,f)(t,v)=\mathbb{B}(f,f)(t,v), \qquad f(0,v)=f_0(v)
\end{equation}
where $\Q$ is the quadratic Boltzmann collision operator associated to the Maxwellian collision kernel
$$\B(v-v_*,\s)=b(\cos \theta)$$
For any solution $f(t,v)$ to \eqref{BEapp}, we denote
$$n(t)=\int_{\R^d}f(t,v)\d v, \qquad n(t)\u(t)=\int_{\R^d}vf(t,v)\d v,$$
and
$$\Theta(t)=\dfrac{1}{d\,n(t)}\int_{\R^d} |v-\u(t)|^2f(t,v)\d v.$$
Since, for Maxwellian molecules
$$\Q_-(f,f)(t,v)=\|b\|_{\L} f(t,v)\int_{\R^d}f(t,v_*)\d v_*=\|b\|_{\L} n(t)f(t,v)$$
one sees easily that the evolution of the density $n(t)$ is given by
\begin{equation}\label{dnt}\dfrac{\d}{\d t}n(t)=-\mu n^2(t),  \qquad \forall t \geq 0,\end{equation}
with $\mu=\alpha \|b\|_{\L}$. Thus
\begin{equation}\label{ntmu}
n(t)=\dfrac{n_0}{\mu \,n_0 t+1}, \qquad \forall t \geq 0.\end{equation}
In the same way,
\begin{equation}\label{dnut} \dfrac{\d}{\d t}(n(t)\u(t))=-\mu\,n^2(t)\,\u(t),\qquad \text{ and } \qquad \dfrac{\d}{\d t}(n(t)\Theta(t))=-\mu\,n^2(t)\Theta(t)\end{equation}
from which we deduce that
$$\u(t)=\u(0) \qquad \text{ and } \qquad \Theta(t)=\Theta(0) \qquad \forall t \geq 0.$$
One sees therefore that, for the special case of Maxwellian molecules, the evolution of the moments of $f(t,v)$ are explicit. Another striking property, very peculiar to Maxwellian molecules, has been noticed in \cite{santos}: if one defines
$$s(t)= \dfrac{1-\alpha}{n_0}\int_0^t n(\tau)\d\tau= \dfrac{1-\alpha}{\mu\,n_0}\log(1+\mu \,n_0 \,t), \qquad t\geq 0,$$
then, the change of unknown
\begin{equation}\label{change} f(t,v)=\dfrac{n(t)}{n_0}g(s(t),v) \qquad t \geq 0\end{equation}
shows that, $f(t,v)$ is a solution to \eqref{BEapp} if and only if $g(s,v)$ is a solution to the classical Boltzmann equation
\begin{equation}\label{BEclass}\partial_s g(s,v)=\Q(g,g)(s,v) \,\:(s >0)\qquad \text{ with } \qquad g(0,v)=f_0(v).\end{equation}
Moreover, one has
$$\int_{\R^d} g(s,v)\d v=n_0=\int_{\R^d} g(0,v)\d v\,,\,\quad \int_{\R^d} vg(s ,v)\d v=n_0 \u(0)$$
and
$$\int_{\R^d} |v-\u(0)|^2 g(s ,v)\d v=dn_0 \Theta(0) \qquad \forall s \geq 0.$$
 In other words, the ballistic annihilation equation \eqref{BEapp} is equivalent to the classical Boltzmann equation with Maxwellian molecules interactions. The mathematical theory of Eq. \eqref{BEclass} is by now completely understood (see e.g. \cite{villani}) and
it is well known that (under suitable conditions on the initial distribution $f_0$)  the solution $g(s,v)$ to \eqref{BEclass} converges (in suitable $L^1$-norm)  as $s \to \infty$ to the Maxwellian distribution
$$\mathcal{M}(v)=\dfrac{n_0}{\left(2\pi\Theta(0)\right)^{d/2}}\exp\left(-\dfrac{|v-\u(0)|^2}{2 \Theta(0)}\right) \qquad v \in \R^d$$
with an explicit rate (we do not wish to explicit the minimal assumption on $f_0$ nor the precise convergence result and rather refer the reader to \cite{villani} for details). Turning back to the original variable, this proves that
$$f(t,v) - \dfrac{n(t)}{n_0}\mathcal{M}(v) \longrightarrow 0\qquad \text{ as } \quad t \to \infty.$$
The long-time behavior of the solution to \eqref{BEapp} is therefore completely described by the evolution of the density $n(t)$ given by \eqref{ntmu} and the moments of the initial datum $f_0$ (through the Maxwellian $\mathcal{M}$). This gives a complete picture of the asymptotic behavior of \eqref{BEapp} and answers the problem stated in Section \ref{interm} for the special case of Maxwellian molecules.

\end{document}